\tikzset{node distance=2cm, auto}
\tikzstyle{vertex}=[circle, draw, inner sep=0pt, minimum size=6pt]
\numberwithin{equation}{section}
\newtheorem*{theorem*}{Theorem}
\newtheorem*{corollary*}{\bf Corollary}
\newtheorem*{remark*}{\bf Remark}
\newtheorem{theorem}{Theorem}[section]
\newtheorem{corollary}[theorem]{Corollary}
\newtheorem{lemma}[theorem]{Lemma}
\newtheorem{proposition}[theorem]{Proposition}
\newcommand{\eat}[1]{}
\begin{document}
	\author{S. Senthamarai Kannan$^{1}$ and Pinakinath Saha}
	
	\address{Chennai Mathematical Institute, Plot H1, SIPCOT IT Park, Siruseri, Kelambakkam,  603103, India.}
	
	\email{kannan@cmi.ac.in.}
	
	\address{Tata Inst. of Fundamental Research,
		Homi Bhabha Road, Colaba
		Mumbai 400005, INDIA.}
	\email{psaha@math.tifr.res.in.}
	\title{Minimal Parabolic subgroups and Automorphism groups of Schubert varieties-II}
	\begin{abstract} 	
		Let $G$ be a simple algebraic group of adjoint type over the field $\mathbb{C}$ of complex numbers, $B$ be a Borel subgroup of $G$ containing a maximal torus $T$ of $G.$ In this article, we show that $\alpha$ is a co-minuscule root if and only if for any parabolic subgroup $Q$ containing $B$ properly, there is no Schubert variety $X_{Q}(w)$ in $G/Q$ such that the minimal parabolic subgroup $P_{\alpha}$ of $G$ is the  connected component, containing the identity automorphism of the group of all algebraic automorphisms of $X_{Q}(w).$	
	\end{abstract}
	\maketitle
	\subjclass{Mathematics Subject Classification:}{~Primary 14M15; secondary 14M17}
	
	\keywords{Keywords: Co-minuscule root, ~Schubert variety,~Automorphism group}
	\footnotetext[1]{Corresponding author: S. SENTHAMARAI KANNAN (e-mail: kannan@cmi.ac.in).}
	\maketitle
	\section{introduction}
	We recall that if $X$ is a projective variety over $\mathbb{C}$, the connected component, containing the identity automorphism of the group of all algebraic automorphisms of $X$ is an algebraic group (see \cite[Theorem 3.7, p.17]{MO}). Let $G$ be a simple algebraic group of adjoint type over $\mathbb{C}.$ Let $T$  be a maximal torus  of $G,$  and let $R$ be the set of roots with respect to $T.$ Let $R^{+}\subset R$ be a set of positive roots. Let $B^{+}$ be the Borel subgroup of $G$ containing $T,$ corresponding to $R^{+}$. Let $B$ be the Borel subgroup of $G$ opposite to $B^+$ determined by $T$. Let $W=N_{G}(T)/T$ denote the Weyl group of $G$ with respect to $T.$ 
	For $w \in W$, let $X(w):=\overline{BwB/B}$ denote the Schubert variety in $G/B$ corresponding to $w$.
	In \cite{Dem aut}, M. Demazure studied the automorphism group of the homogeneous space $G/P,$ where $P$ is a parabolic subgroup of $G.$ He proved that if $(G, P)$ is not exceptional, then the connected component, containing identity automorphism of the group of all algebraic automorphisms of $G/P$ is $G$ (see \cite[Theorem 2, p.75]{Akh}). The Lie algebra of $G$ may be identified with the Lie algebra of global vector fields $H^0(G/P, T_{G/P}).$ Let $Aut^{0}(X(w))$ denote the connected component, containing the identity automorphism of the group of all algebraic automorphisms of $X(w).$ 
	Let $\alpha_{0}$ denote the highest root of $G$ with respect to $T$ and $B^{+}.$ For the left action of $G$ on $G/B$, let $P_{w}$ denote the stabilizer of $X(w)$ in $G.$
	In \cite[p.772,Theorem 4.2(2)]{Kan}, the first named author proved that if $G$ is simply-laced and $X(w)$ is smooth, then we have $P_{w}=Aut^{0}(X(w))$ if and only if $w^{-1}(\alpha_{0})<0.$ Therefore, it is a natural question to ask whether given any parabolic subgroup $P$ of $G$ containing $B$ properly, is there a Schubert variety $X(w)$ in $G/B$ such that $P=Aut^{0}(X(w))$ ? If $P=B,$ there is no such Schubert variety in $G/B.$ In \cite{KP1}, we gave an affirmative answer to this question. Also, we gave some partial results for Schubert varieties in partial flag varieties of type $A_{n}.$ Therefore, it is  a natural question to ask whether given a parabolic subgroup $P$ of $G$ containing $B$ properly, is there a parabolic subgroup $Q$ containing $B$ properly, such that $P=Aut^0(X_{Q}(w))$ for some Schubert variety $X_{Q}(w)$ in $G/Q$? In \cite{KS2}, we proved that when $G$ is simply-laced,
	a fundamental weight $\omega_{\alpha}$ is minuscule if
	and only if for any parabolic subgroup $Q$ containing $B$ properly, there is no Schubert variety
	$X_{Q}(w)$ in $G/Q$ such that $P_{\alpha}=Aut^{0}(X_{Q}(w)).$
	Note that when $G$ is simply-laced $\omega_{\alpha}$ is minuscule if and only if $\alpha$ is co-minuscule. Therefore, it is  a natural question to ask what is the generalization of the above result to arbitrary simple algebraic group of adjoint type. In this article, we prove the
	following:
	\begin{theorem}
		Let $G$ be a simple algebraic group of adjoint type over $\mathbb{C}.$ Then a simple root $\alpha$ is co-minuscule if and only if for any parabolic subgroup $Q$ containing $B$ properly, there is no Schubert variety $X_{Q}(w)$ in $G/Q$ such that $P_{\alpha}=Aut^0(X_{Q}(w)),$ where $P_{\alpha}$ is the minimal parabolic subgroup containing $B$ corresponding to $\alpha$ (see Theorem \ref{theorem 9.2}).
	\end{theorem}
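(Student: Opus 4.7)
The plan is to prove the biconditional in two directions, mirroring the strategy used in the simply-laced case \cite{KS2} but adapting the combinatorial arguments to accommodate the distinction between long and short roots inherent in non-simply-laced types.

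For the ``only if'' direction, assume $\alpha$ is co-minuscule and suppose toward a contradiction that there exist $Q \supsetneq B$ and $w \in W$ with $P_{\alpha} = Aut^{0}(X_{Q}(w))$. Since $P_{\alpha}$ then lies in the stabilizer of $X_{Q}(w)$ in $G$, the Schubert variety $X_{Q}(w)$ must be $P_{\alpha}$-stable, which strongly constrains the admissible Weyl group elements $w$. The next step is to exploit the co-minuscule property, namely that $\alpha$ has coefficient one in the highest root $\alpha_{0}$, to exhibit, for each such candidate $X_{Q}(w)$, an algebraic automorphism outside $P_{\alpha}$. This is expected to proceed by identifying a suitable $P_{\alpha}$-stable subvariety or fibration inside $X_{Q}(w)$ whose intrinsic automorphism group is larger than what $P_{\alpha}$ can accommodate, much as the minuscule $G/P_{\alpha}$ carries extra automorphisms beyond $P_{\alpha}$ itself when $\omega_{\alpha}$ is minuscule.

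For the ``if'' direction, assume $\alpha$ is not co-minuscule. The plan is to construct an explicit pair $(Q,w)$ with $P_{\alpha} = Aut^{0}(X_{Q}(w))$. A natural choice is to take $Q$ to be a maximal parabolic $P_{\beta}$ associated with a carefully selected simple root $\beta \neq \alpha$, together with a minimal coset representative $w$ of $W/W_{Q}$ satisfying $s_{\alpha} w < w$ (which guarantees $P_{\alpha}$-stability of $X_{Q}(w)$) and $w^{-1}(\alpha_{0}) < 0$ (which, by analogy with \cite[Theorem 4.2(2)]{Kan}, ensures the match between stabilizer and connected automorphism group). The non-co-minuscule hypothesis provides the combinatorial room to realize both conditions simultaneously. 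The verification then reduces to computing the space $H^{0}(X_{Q}(w), T_{X_{Q}(w)})$ of global vector fields and confirming that its Lie algebra coincides with $\mathrm{Lie}(P_{\alpha})$, after which connectedness gives $Aut^{0}(X_{Q}(w)) = P_{\alpha}$.

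The main obstacle lies in the forward direction. In \cite{KS2}, the simply-laced setting allowed one to equate co-minuscule roots with minuscule fundamental weights and import strong rigidity from the minuscule geometry; in the non-simply-laced case one must argue intrinsically from the coefficient of $\alpha$ in $\alpha_{0}$. In particular, for types $B_{n}, C_{n}, F_{4}, G_{2}$, potential automorphisms arising from short-root subgroups acting on Schubert varieties must be ruled out, and this likely demands case-by-case analysis across the Dynkin classification. A secondary technical challenge is justifying the embedding $Aut^{0}(X_{Q}(w)) \hookrightarrow G$ before one can compare it with $P_{\alpha}$, since $Aut^{0}$ is defined abstractly and must be pinned down using $T$-equivariance together with the restricted range of weights appearing on $X_{Q}(w)$.
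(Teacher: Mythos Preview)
Your backward direction is broadly aligned with the paper: one does choose $Q$ to be a specific maximal parabolic (namely $P_{n}$ in type $B_{n}$, $P_{n-1}$ in type $C_{n}$, $P_{3}$ in $F_{4}$, $P_{1}$ in $G_{2}$) and a carefully chosen $w_{i}$ with $w_{i}^{-1}(\alpha_{0})<0$ and $s_{i}w_{i}<w_{i}$, $s_{j}w_{i}\not<w_{i}$ for $j\neq i$. The point you understate is that the computation is not of $H^{0}(X_{Q}(w),T_{X_{Q}(w)})$ directly but of the vanishing $H^{0}(w_{i},\mathfrak{p}_{Q})=H^{1}(w_{i},\mathfrak{p}_{Q})=0$; this, via the long exact sequence for $0\to\mathfrak{p}_{Q}\to\mathfrak{g}\to\mathfrak{g}/\mathfrak{p}_{Q}\to 0$, forces $H^{0}(w_{i},\mathfrak{g}/\mathfrak{p}_{Q})=\mathfrak{g}$, which is exactly what embeds $Aut^{0}$ into $G$ and simultaneously identifies it with the stabilizer $P_{i}$. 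This vanishing is the genuine technical content, and it is done case-by-case in Sections 5--8 by explicit $B_{\alpha}$-module decompositions and repeated use of Demazure's lemma.

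Your forward direction, however, is on the wrong track, and your difficulty assessment is inverted. The paper's argument for the co-minuscule case (Proposition \ref{theorem 9.1}) is a short, uniform combinatorial rigidity argument, not a search for extra automorphisms via subvarieties or fibrations, and not case-by-case. The key steps are: (i) $P_{r}=Aut^{0}(X_{Q}(w))$ forces $w^{-1}\in W^{S\setminus\{\alpha_{r}\}}$ and, by faithfulness together with \cite[Theorem 6.6]{Kan}, $w^{-1}(\alpha_{0})<0$; (ii) the co-minuscule hypothesis, via Lemma \ref{lemma 1.2}, pins down $w^{-1}=w_{0}^{S\setminus\{\alpha_{r}\}}$ uniquely; (iii) for any $\alpha_{i}\in J$ (where $Q=P_{J}$), using $-w_{0}=\mathrm{id}$ in the non-simply-laced case one finds $\alpha_{j}\in S\setminus\{\alpha_{r}\}$ with $w^{-1}s_{j}w=s_{i}$, so $s_{j}w\equiv w \bmod W_{Q}$, contradicting that the stabilizer is only $P_{r}$. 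Your proposed mechanism of producing an automorphism outside $P_{\alpha}$ from an intrinsic automorphism group of a subvariety does not appear, and it is unclear how you would make it work without already knowing the structure that Lemma \ref{lemma 1.2} provides. The case-by-case labor in this paper lives entirely in the non-co-minuscule (backward) direction, not the co-minuscule one.
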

	
	The organization of this article is as follows: In Section 2, we introduce some notation and preliminaries on algebraic groups and Lie algebras. In Section 3, we study some properties of co-minuscule root. In Section 4, we prove a crucial lemma on a non-co-minuscule root involving the highest root. In Section 5, when $G$ is of type $B_{n},$ we prove that for any non-co-minuscule simple root $\alpha$ there is a Schubert variety $X_{\alpha}$ in a partial flag variety such that $P_{\alpha}=Aut^0(X_{\alpha})$ (see Proposition \ref{prop5.8} for more precise statement).
	In Section 6, when $G$ is of type $C_{n},$ we prove that for any non-co-minuscule simple root $\alpha$ there is a Schubert variety $X_{\alpha}$ in a partial flag variety such that $P_{\alpha}=Aut^0(X_{\alpha})$ (see Proposition \ref{prop6.7} for more precise statement). In Section 7, when $G$ is of type $F_{4},$ we prove that for any simple root $\alpha$ there is a Schubert variety $X_{\alpha}$ in a partial flag variety such that $P_{\alpha}=Aut^0(X_{\alpha})$ (see Proposition \ref{prop 7.6} for more precise statement). In Section 8, when $G$ is of type $G_{2},$ we prove that for any simple root $\alpha$ there is a Schubert variety $X_{\alpha}$ in a partial flag variety such that $P_{\alpha}=Aut^0(X_{\alpha})$ (see Proposition \ref{prop 8.3} for more precise statement). In Section 9, we first prove that if  $\alpha$ is co-minuscule and there exists a parabolic subgroup $Q$ of $G$ containing $B,$ and a Schubert variety $X_{Q}(w)$ in $G/Q$ such that $P_{\alpha}=Aut^0(X_{Q}(w)),$ then we have $Q=B$ (see Proposition \ref{theorem 9.1}). Next we use \cite[Theorem 9.2]{KS2}, Proposition \ref{theorem 9.1}, and the results of the Sections 5--8 to prove the main theorem of this article (see Theorem \ref{theorem 9.2} for more precise statement).

	\section{Notation and Preliminaries}
	In this section, we set up some notation and preliminaries. We refer to \cite{Hum1}, \cite{Hum2}, \cite{Jan} for preliminaries in algebraic groups and Lie algebras.

	Let $G,B,T, R, R^{+},$ and $W,$ be as in the introduction.    
	Let $S = \{\alpha_1,\ldots,\alpha_n\}$ denote the set of simple roots in $R^{+}.$ Every $\beta \in R$ can be expressed uniquely as  $\sum\limits_{i=1}^{n}k_{i}\alpha_{i}$ with integral coefficients $k_{i}$ with either all non-negative or all non-positive. This allows us to define the {\bf height} of a root (relative to $S$) by ht$(\beta)=\sum\limits_{i=1}^{n}k_{i}.$ For $\beta=\sum\limits_{i=1}^{n}k_{i}\alpha_{i} \in R,$ we define {\bf support} of $\beta$ to be the set $\{\alpha_{i}: k_{i}\neq 0 \}.$
	The simple reflection in  $W$ corresponding to $\alpha_i$ is denoted
	by $s_{i}$. Then $(W, S)$ is a Coxeter group (see \cite[Theorem 29.4, p.180]{Hum2}). There is a natural length function $\ell$ defined on $W.$ Let $\mathfrak{g}$ be the Lie algebra of $G$. 
	Let $\mathfrak{h}\subset \mathfrak{g}$ be the Lie algebra of $T$ and  $\mathfrak{b}\subset \mathfrak{g}$ be the Lie algebra of $B$. Let $X(T)$ denote the group of all characters of $T$. 
	We have $X(T)\otimes\mathbb{R}=Hom_{\mathbb{R}}(\mathfrak{h}_{\mathbb{R}}, \mathbb{R})$, the dual of the real form of $\mathfrak{h}$. The positive definite 
	$W$-invariant form on $Hom_{\mathbb{R}}(\mathfrak{h}_{\mathbb{R}}, \mathbb{R})$ 
	induced by the Killing form of $\mathfrak{g}$ is denoted by $(~,~)$. 
	We use the notation $\left< ~,~ \right>$ to
	denote $\langle \mu, \alpha \rangle  = \frac{2(\mu,
		\alpha)}{(\alpha,\alpha)}$,  for every  $\mu\in X(T)\otimes \mathbb{R}$ and $\alpha\in R$. For a subset $J$ of $S,$ we denote by $W_{J}$ the subgroup of $W$ generated by $\{s_{\alpha}:\alpha \in J\}$. Let $W^{J}:=\{w\in W: w(\alpha)\in R^{+}~ for ~ all ~ \alpha \in J\}.$  For each $w\in W_{J},$ choose a representative element $n_{w}\in N_{G}(T).$ Let $N_{J}:=\{n_{w}: w\in W_{J}\}.$  Let $P_{J}:=BN_{J}B.$
	For a simple root $\alpha_{i}$, we denote by $P_{i}$ the minimal parabolic subgroup $P_{\alpha_{i}}$ of $G.$  Let  $\{\omega_{i}:1\le i\le n\}$ be the set of fundamental dominant weights corresponding to $\{\alpha_{i}: 1\le i\le n\}.$  
	
	We recall the following definition and facts (see \cite[p.119-120]{BL}):
	
	A fundamental weight $\omega$ is said to be  {\bf minuscule} if $\omega$ satisfies $\langle\omega, \beta \rangle \le 1$ for all $\beta \in R^{+}.$ 
	A simple root $\alpha$ is said to be  {\bf co-minuscule} if $\alpha$ occurs with coefficient $1$ in the expression of the highest root $\alpha_{0}.$
	
	If $G$ is simply-laced, then $\omega_{\alpha}$ is  minuscule if and only if $\alpha$ is co-minuscule (see \cite[Lemma 3.1]{KS2}). Here, is the complete list of co-minuscule roots in non-simply-laced root systems.
	\begin{center}
		\begin{tabular}{ |p{1.3cm}|p{3.3cm}|p{3.5cm}| }
			\hline
			\multicolumn{3}{|c|}{ Co-minuscule root in non-simply-laced root system} \\
			\hline
			no.&Root System& Co-minuscule root \\
			\hline
			$1.$ &	$B_{n}$ $(n\ge 2)$  & $\alpha_{1}$     \\

			2. & $C_{n}$ $(n\ge 3)$  & $\alpha_{n}$\\
			
			3.&	$F_{4}$  & none       \\
			
			4.&	$G_{2}$  &    none   \\
			\hline
		\end{tabular}
		
		\begin{center}
			\bf{Table 1: Co-minuscule roots}
		\end{center}
	\end{center}

	We recall the following proposition from \cite[Proposition 7.1, p.342-343]{CP}.
	
	Let $\alpha_{0}=\sum\limits_{i=1}^{n}c_{i}\alpha_{i},$ and $\check{\alpha_{0}}=\sum\limits_{i=1}^{n}\check{c_{i}}\check{\alpha_{i}}.$ We have $\check{\alpha_{0}}=\frac{2\alpha_{0}}{(\alpha_{0}, \alpha_{0})}=\frac{2}{(\alpha_{0}, \alpha_{0})}\sum\limits_{i=1}^{n}c_{i}\frac{(\alpha_{i}, \alpha_{i})}{2}\check{\alpha_{i}},$ hence $\check{c_{i}}=\frac{(\alpha_{i}, \alpha_{i})}{(\alpha_{0}, \alpha_{0})}c_{i}.$ The dual Coxeter number of $\mathfrak{g}$ is 
	$$g=1+\sum\limits_{i=1}^{n}\check{c_{i}}.$$
	
	Here, is the complete list of dual Coxeter number in non-simply-laced root system.
	\begin{center}
		\begin{tabular}{ |p{1.3cm}|p{3.3cm}|p{3.5cm}| }
			\hline
			\multicolumn{3}{|c|}{Dual Coxeter number in non-simply-laced root system} \\
			\hline
			no.&Root System& $g$ \\
			\hline
			$1.$ &	$B_{n}$ $(n\ge 2)$  & $2n-1$     \\

			2. & $C_{n}$ $(n\ge 3)$  & $n+1$\\
			
			3.&	$F_{4}$  & $9$       \\
			
			4.&	$G_{2}$  &    $4$  \\
			\hline
		\end{tabular}
		
		\begin{center}
			\bf{Table 2: Dual Coxeter number}
		\end{center}
	\end{center} 
	\begin{proposition}\label{Prop 2.1}
		Let $\alpha$ be any long root. Then we have 
		\begin{itemize}
			\item[(1)] There is a unique element $u_{\alpha}$ in $W$ of minimal length such that $u_{\alpha}^{-1}(\alpha_{0})=\alpha.$
		\end{itemize}
		
		\begin{itemize}
			\item[(2)] If $\alpha$ is in $S,$ then $\ell(u_{\alpha})=g-2.$
		\end{itemize}
	\end{proposition}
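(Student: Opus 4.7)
The plan is to deduce (1) from standard Coxeter-group theory and to establish (2) by exhibiting an explicit reduced expression of length $g-2$ for $u_{\alpha}$.

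For (1), since $W$ acts transitively on the set of long roots, the set $\{w\in W : w^{-1}(\alpha_{0})=\alpha\}$ is non-empty, and any two such elements differ on the right by an element of the stabilizer $W^{\alpha_{0}}$ of $\alpha_{0}$ in $W$. Hence it is a single right coset of $W^{\alpha_{0}}$. As $\alpha_{0}$ is a dominant weight, the standard theorem on stabilizers of dominant weights identifies $W^{\alpha_{0}}$ with the parabolic subgroup $W_{J}$, where $J=\{\alpha_{i}\in S : (\alpha_{i},\alpha_{0})=0\}$. The theory of parabolic quotients in Coxeter groups then furnishes a unique minimum-length representative $u_{\alpha}$ of this coset, characterized among the coset by the inequalities $u_{\alpha}^{-1}(\alpha_{i})>0$ for every $\alpha_{i}\in J$.

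For (2), assume $\alpha\in S$ is long and construct $u_{\alpha}$ explicitly. One exhibits a sequence of positive long roots
\[
\alpha=\beta_{0}\to\beta_{1}\to\cdots\to\beta_{g-2}=\alpha_{0},\qquad\beta_{j+1}=s_{i_{j}}(\beta_{j}),
\]
and sets $u:=s_{i_{g-3}}\cdots s_{i_{0}}$, so that $u(\alpha)=\alpha_{0}$. Concretely: in type $B_{n}$ with $\alpha=\alpha_{1}$, $u=s_{2}s_{3}\cdots s_{n-1}s_{n}s_{n-1}\cdots s_{2}$, which equals the reflection $s_{e_{2}}$ and has length $2n-3=g-2$; in type $C_{n}$ with $\alpha=\alpha_{n}$, $u=s_{1}s_{2}\cdots s_{n-1}$ has length $n-1=g-2$; in type $G_{2}$ with $\alpha=\alpha_{2}$, $u=s_{2}s_{1}$ has length $2=g-2$; similar explicit words handle $F_{4}$ and the simply-laced types. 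In the simply-laced case all roots are long, every simple reflection changes the height of a root by at most one, and the Coxeter number equals the dual Coxeter number, so the shortest word taking $\alpha$ to $\alpha_{0}$ has length $\mathrm{ht}(\alpha_{0})-1=g-2$.

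The main obstacle is verifying that each of these explicit expressions is reduced. I would do this by identifying $u$ with a product (often a single reflection) of known length, or by computing the inversion set $N(u)=\{\beta\in R^{+} : u(\beta)<0\}$ directly and checking $|N(u)|=g-2$. Two structural inputs are used: membership $u\in{}^{J}W$ forces $N(u)$ to avoid the positive roots of $W_{J}$; and the formula $g=1+\sum_{i}\check{c}_{i}$ from Table~2 controls the count. Together with the case-by-case construction, this yields $\ell(u_{\alpha})=g-2$, completing (2).
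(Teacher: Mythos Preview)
The paper does not actually prove this proposition: its proof consists solely of the citation ``See \cite[Proposition~7.1, p.~342--343]{CP}.''  Your proposal therefore goes well beyond what the paper does, supplying an independent argument rather than quoting the literature.

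Your treatment of~(1) is correct: transitivity of $W$ on long roots, identification of the stabilizer of the dominant root $\alpha_{0}$ with the parabolic subgroup $W_{J}$ for $J=\{\alpha_{i}\in S:(\alpha_{i},\alpha_{0})=0\}$, and uniqueness of minimal coset representatives in $W_{J}\backslash W$ together give exactly the statement.

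For~(2) your plan has a genuine gap in coverage.  In type $B_{n}$ the long simple roots are $\alpha_{1},\dots,\alpha_{n-1}$, not just $\alpha_{1}$; your case analysis produces an explicit $u$ only for $\alpha=\alpha_{1}$ and is silent on $\alpha_{2},\dots,\alpha_{n-1}$ (the paper itself, in Lemma~\ref{lem5.1}, needs precisely the case $\alpha=\alpha_{n-1}$).  Likewise in $F_{4}$ both $\alpha_{1}$ and $\alpha_{2}$ are long, and ``similar explicit words'' is not a proof.  The simply-laced argument via height is fine, since there each simple reflection changes height by exactly $\pm 1$ and $h=g$, giving both the lower bound $\ell(u_{\alpha})\ge \mathrm{ht}(\alpha_{0})-1$ and an explicit word realizing it; but in the non-simply-laced types a simple reflection can change height by more than one, so the height bound no longer yields $\ell(u_{\alpha})\ge g-2$ directly, and you have not supplied an alternative lower bound.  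Your fallback---checking $u\in{}^{J}W$ to force $u=u_{\alpha}$ by the uniqueness in~(1)---is the right idea, but it must be carried out for \emph{each} long simple root, not just one per type.
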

	\begin{proof}
		See \cite[Proposition 7.1, p.342-343]{CP}.
	\end{proof}
	\begin{corollary}\label{cor 2.2}
		Let $\alpha\in S$ be a long root, then there exists a unique element $v_{\alpha}$ of minimal length such that $v_{\alpha}^{-1}(\alpha_{0})=-\alpha.$ 
	\end{corollary}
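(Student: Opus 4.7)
The plan is to reduce the corollary to Proposition~\ref{Prop 2.1} by right-multiplying by the simple reflection $s_{\alpha}$. Concretely, I would define $v_{\alpha} := u_{\alpha} s_{\alpha}$, where $u_{\alpha}$ is the element produced by Proposition~\ref{Prop 2.1}(1), and verify the defining property together with the minimality/uniqueness in separate steps.

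Existence is immediate: since $u_{\alpha}^{-1}(\alpha_{0}) = \alpha$, one computes $v_{\alpha}^{-1}(\alpha_{0}) = s_{\alpha} u_{\alpha}^{-1}(\alpha_{0}) = s_{\alpha}(\alpha) = -\alpha$. As a sanity check on the length, note that $u_{\alpha}(\alpha) = u_{\alpha}(u_{\alpha}^{-1}(\alpha_{0})) = \alpha_{0} > 0$, so $s_{\alpha}$ is a right-ascent of $u_{\alpha}$, giving $\ell(v_{\alpha}) = \ell(u_{\alpha}) + 1 = g - 1$ by Proposition~\ref{Prop 2.1}(2).

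The core of the argument is the minimality and uniqueness. The key observation is that any $v \in W$ with $v^{-1}(\alpha_{0}) = -\alpha$ is automatically forced to satisfy $v(\alpha) = -\alpha_{0} < 0$ (apply $v$ to both sides of $v^{-1}(\alpha_{0}) = -\alpha$). Consequently $s_{\alpha}$ is always a right-descent of such a $v$, so $\ell(v s_{\alpha}) = \ell(v) - 1$. Setting $u := v s_{\alpha}$, we get $u^{-1}(\alpha_{0}) = s_{\alpha} v^{-1}(\alpha_{0}) = s_{\alpha}(-\alpha) = \alpha$, so Proposition~\ref{Prop 2.1}(1) yields $\ell(u) \geq \ell(u_{\alpha})$ with equality iff $u = u_{\alpha}$. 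Translating back, $\ell(v) = \ell(u) + 1 \geq \ell(u_{\alpha}) + 1 = \ell(v_{\alpha})$, with equality iff $v = u_{\alpha} s_{\alpha} = v_{\alpha}$.

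I do not anticipate any serious obstacle: the proof hinges on the small but decisive identity $v(\alpha) = -\alpha_{0}$, which is forced by the hypothesis on $v$ and turns right-multiplication by $s_{\alpha}$ into a length-decreasing bijection from $\{v \in W : v^{-1}(\alpha_{0}) = -\alpha\}$ onto $\{u \in W : u^{-1}(\alpha_{0}) = \alpha\}$. Had $v(\alpha) > 0$ been a possibility, one would have to worry about a competing element of length $g-2$, but this case is ruled out automatically by the hypothesis, so Proposition~\ref{Prop 2.1} transfers directly.
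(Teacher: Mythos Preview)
Your proof is correct, but the paper takes a shorter route. Proposition~\ref{Prop 2.1}(1) is stated for \emph{any} long root, not just positive or simple ones; so the paper simply applies it to the long root $-\alpha$ and sets $v_{\alpha}:=u_{-\alpha}$, which is a one-line proof. Your argument instead builds $v_{\alpha}$ as $u_{\alpha}s_{\alpha}$ and transfers minimality and uniqueness from $u_{\alpha}$ via the length-decreasing bijection $v\mapsto vs_{\alpha}$ between $\{v:v^{-1}(\alpha_{0})=-\alpha\}$ and $\{u:u^{-1}(\alpha_{0})=\alpha\}$. The payoff of your approach is that the formula $v_{\alpha}=u_{\alpha}s_{\alpha}$ and the length $\ell(v_{\alpha})=g-1$ fall out of the proof itself, whereas the paper records the first of these only as a remark after the corollary; the paper's approach, on the other hand, is more economical and avoids re-proving what Proposition~\ref{Prop 2.1} already gives.
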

	\begin{proof}
		Take $v_{\alpha}=u_{-\alpha}.$ 
	\end{proof}
	Note that $v_{\alpha}=s_{\alpha}u_{\alpha}$ for any long simple root $\alpha.$
	
	Now we discuss some preliminaries on the cohomology of vector bundles on Schubert varieties associated to the rational $B$-modules. Let $V$ be a rational $B$-module. Let $\phi:B\longrightarrow GL(V)$ be the corresponding homomorphism of algebraic groups. The total space of the vector bundle  $\mathcal{L}(V)$  on $G/B$ is defined by the set of equivalence classes 
	$\mathcal{L}(V)= G \times_{B} V$ corresponding to the following equivalence relation on $G\times V$:
	\begin{center}
		$(g,v)\sim (gb,\phi(b^{-1})\cdot v)$ for $g\in G, b\in B, v\in V.$ 
	\end{center}
	We denote by the restriction of $\mathcal{L}(V)$ to $X(w)$ also by $\mathcal{L}(V)$. We denote the cohomology modules $H^i(X(w), \mathcal{L}(V))$ by $H^i(w, V)$ ($i\in \mathbb{Z}_{\ge 0}$). If $V=\mathbb{C}_{\lambda}$ is one dimensional representation $\lambda: B\longrightarrow \mathbb{C}^{\times}$ of $B,$ then we denote $H^i(w, V)$ by $H^i(w, \lambda).$
	
	Let $L_{\alpha}$ denote the Levi subgroup of $P_{\alpha}$
	containing $T$. Note that $L_{\alpha}$ is the product of $T$ and the homomorphic image 
	$G_{\alpha}$ of $SL(2, \mathbb{C})$ via a homomorphism $\psi:SL(2, \mathbb{C})\longrightarrow L_{\alpha}$ (see  [7, II, 1.3]). We denote the intersection of $L_{\alpha}$ and $B$ by $B_{\alpha}$.  
	We note that the morphism $L_{\alpha}/B_{\alpha}\hookrightarrow P_{\alpha}/B$  induced by the inclusion $L_{\alpha}\hookrightarrow P_{\alpha}$ is an isomorphism. Therefore,  to compute the cohomology modules $H^{i}(P_{\alpha}/B, \mathcal{L}(V))$ ($0\leq i \leq 1$) for any $B$-module 
	$V,$ we treat $V$ as a $B_{\alpha}$-module  and we compute 
	$H^{i}(L_{\alpha}/B_{\alpha}, \mathcal{L}(V))$. 
	
	We use the following lemma to compute cohomology modules. The following lemma is due to Demazure (see \cite[p.271-272]{Dem}). Demazure used this lemma to prove  Borel-Weil-Bott's theorem.
	
	\begin{lemma}\label{lem 2.1}
		Let $w=\tau s_{\alpha},$ $\ell(w)=\ell(\tau) + 1,$ and $\lambda$ be a character of $B.$ Then we have 
		\begin{enumerate}
			
			\item [(1)] If $\langle \lambda , \alpha \rangle\ge0,$ then $H^{j}(w, \lambda)=H^{j}(\tau, H^0(s_{\alpha}, \lambda))$ for all $j\ge 0.$
			
			\item[(2)] If $\langle \lambda , \alpha \rangle\ge 0,$ then $H^j(w, \lambda)=H^{j+1}(w, s_{\alpha}\cdot \lambda)$ for all $j\ge0.$ 
			
			\item[(3)] If $\langle \lambda, \alpha\rangle\le -2 ,$ then $H^{j+1}(w, \lambda)=H^j(w, s_{\alpha}\cdot \lambda)$ for all $j\ge 0.$
			
			\item[(4)] If $\langle \lambda , \alpha \rangle=-1,$ then $H^j(w, \lambda)$ vanishes for every $j\ge0.$
		\end{enumerate}
	\end{lemma}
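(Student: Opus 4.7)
The plan is to exploit the $\mathbb{P}^{1}$-fibration on Schubert varieties induced by the projection $\pi\colon G/B\to G/P_{\alpha}$. Since $w=\tau s_{\alpha}$ with $\ell(w)=\ell(\tau)+1$, one has $\tau(\alpha)\in R^{+}$, and $X(w)$ is stable under the left action of $P_{\alpha}$, so $\pi$ restricts to a Zariski-locally trivial $\mathbb{P}^{1}$-bundle $f\colon X(w)\to X(\tau)$ whose fibers are copies of $P_{\alpha}/B_{\alpha}$. I would attack all four parts of the lemma simultaneously using the Leray spectral sequence
\[
E_{2}^{p,q}=H^{p}\bigl(X(\tau),\, R^{q}f_{*}\mathcal{L}(\lambda)\bigr)\ \Longrightarrow\ H^{p+q}(w,\lambda),
\]
which has at most two nonzero rows ($q=0,1$) because the fibers are one-dimensional.

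The key input is the computation of the direct images $R^{q}f_{*}\mathcal{L}(\lambda)$. Being homogeneous sheaves, they are determined by their fiberwise cohomology, and the restriction of $\mathcal{L}(\lambda)$ to a fiber is the line bundle of degree $\langle\lambda,\alpha\rangle$ on $\mathbb{P}^{1}$. The standard cohomology of $\mathcal{O}_{\mathbb{P}^{1}}(n)$ then splits into three cases. When $\langle\lambda,\alpha\rangle\ge 0$, the sheaf $R^{1}f_{*}\mathcal{L}(\lambda)$ vanishes and $R^{0}f_{*}\mathcal{L}(\lambda)$ is the vector bundle associated to the $B$-module $H^{0}(s_{\alpha},\lambda)$. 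When $\langle\lambda,\alpha\rangle=-1$, both direct images vanish. When $\langle\lambda,\alpha\rangle\le -2$, Serre duality on $\mathbb{P}^{1}$ together with the identity $\langle s_{\alpha}\cdot\lambda,\alpha\rangle=-\langle\lambda,\alpha\rangle-2$ identifies $R^{1}f_{*}\mathcal{L}(\lambda)$ with the bundle associated to $H^{0}(s_{\alpha},s_{\alpha}\cdot\lambda)$, while $R^{0}f_{*}\mathcal{L}(\lambda)=0$.

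Given these three fiber computations, the four assertions follow by reading off the spectral sequence. Part (1) is the first case, where only the $q=0$ row survives. Part (4) is the second case, where all $E_{2}$-terms vanish. Part (3) is the third case: only the $q=1$ row survives and yields $H^{j+1}(w,\lambda)=H^{j}(\tau,H^{0}(s_{\alpha},s_{\alpha}\cdot\lambda))$; the right-hand side equals $H^{j}(w,s_{\alpha}\cdot\lambda)$ by applying part~(1) to $s_{\alpha}\cdot\lambda$, which satisfies $\langle s_{\alpha}\cdot\lambda,\alpha\rangle\ge 0$. Part (2) is then the reciprocal of part (3), obtained by applying (3) to $s_{\alpha}\cdot\lambda$ and using that the dot action of $s_{\alpha}$ is an involution on weights.

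The delicate step is the third case of the direct-image computation: identifying $R^{1}f_{*}\mathcal{L}(\lambda)$ with the homogeneous bundle attached to $H^{0}(s_{\alpha},s_{\alpha}\cdot\lambda)$ requires tracking Serre duality $B$-equivariantly, so that the twist by the relative dualizing sheaf of the $\mathbb{P}^{1}$-bundle exactly accounts for the shift built into the dot action. Once this equivariant identification is secured, the remainder of the proof is purely formal spectral-sequence manipulation.
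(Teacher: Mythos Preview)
The paper does not prove this lemma at all: it simply attributes the result to Demazure and cites \cite[p.~271--272]{Dem}. Your outline is, in fact, essentially Demazure's original argument---the Leray spectral sequence for the $\mathbb{P}^{1}$-fibration coming from $G/B\to G/P_{\alpha}$, together with the cohomology of line bundles on $\mathbb{P}^{1}$---so in substance you are reconstructing the cited proof rather than offering an alternative.

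Two small imprecisions are worth correcting. First, the condition you need is not that $X(w)$ is stable under the \emph{left} action of $P_{\alpha}$ (that would be $s_{\alpha}w\le w$, which is irrelevant here), but that $X(w)$ is a union of fibres of $\pi$, i.e.\ $ws_{\alpha}\le w$; this holds because $w(\alpha)=\tau(-\alpha)<0$. Second, the base of the fibration is the Schubert variety $X_{P_{\alpha}}(\tau)\subset G/P_{\alpha}$, not $X(\tau)\subset G/B$; to recover $H^{j}(\tau,\,\cdot\,)$ from cohomology on $X_{P_{\alpha}}(\tau)$ you need the further observation that for a bundle pulled back from $G/P_{\alpha}$ (equivalently, one associated to a $P_{\alpha}$-module such as $H^{0}(s_{\alpha},\lambda)$) the two cohomologies agree, since $R\pi_{*}\mathcal{O}_{X(\tau)}=\mathcal{O}_{X_{P_{\alpha}}(\tau)}$. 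With these adjustments your argument is correct.
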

	
	Let $\pi:\hat{G}\longrightarrow G$ be the simply connected covering of $G$.  
	Let  $\hat{L_{\alpha}}$  (respectively, $\hat{B_{\alpha}}$)  be the inverse image 
	of $L_{\alpha}$ (respectively, of $B_{\alpha}$) in $\hat{G}$. Note that $\hat{L_{\alpha}}/\hat{B_{\alpha}}$ is isomorphic to $L_{\alpha}/B_{\alpha}$. We make use of this isomorphism to use the same notation for the vector bundle on $L_{\alpha}/B_{\alpha}$ associated to a $\hat{B_{\alpha}}$-module. Let $V$ be an irreducible  $\hat{L_{\alpha}}$-module and $\lambda$ be a character of $\hat{B_{\alpha}}$. 
	
	Then, we have
	
	\begin{lemma}\label{lem 3.1}
		\begin{enumerate}
			
			\item If $\langle \lambda , \alpha \rangle \geq 0$, then the $\hat{L_{\alpha}}$-module
			$H^{0}(L_{\alpha}/B_{\alpha} , V\otimes \mathbb{C}_{\lambda})$ 
			is isomorphic to the tensor product of  $ ~ V$ and 
			$H^{0}(L_{\alpha}/B_{\alpha} , \mathbb{C}_{\lambda})$. Further, we have  
			$H^{j}(L_{\alpha}/B_{\alpha} , V\otimes \mathbb{C}_{\lambda}) =0$ 
			for every $j\geq 1$.
			\item If $\langle \lambda , \alpha \rangle  \leq -2$, then we have  
			$H^{0}(L_{\alpha}/B_{\alpha} , V \otimes \mathbb{C}_{\lambda})=0.$ 
			Further, the $\hat{L_{\alpha}}$-module  $H^{1}(L_{\alpha}/B_{\alpha} , V \otimes \mathbb{C}_{\lambda})$ is isomorphic to the tensor product of  $~V$ and $H^{0}(L_{\alpha}/B_{\alpha} , 
			\mathbb{C}_{s_{\alpha}\cdot\lambda})$. 
			\item If $\langle \lambda , \alpha \rangle = -1$, then 
			$H^{j}( L_{\alpha}/B_{\alpha} , V \otimes \mathbb{C}_{\lambda}) =0$ 
			for every $j\geq 0$.
		\end{enumerate}
	\end{lemma}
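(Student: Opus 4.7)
The plan is to reduce all three assertions to the corresponding statements for the line bundle $\mathcal{L}(\mathbb{C}_\lambda)$ alone, which are then immediate from Demazure's Lemma \ref{lem 2.1} applied with $w = s_\alpha$ and $\tau = e$. The crucial input that makes this reduction work is the hypothesis that $V$ is already an $\hat{L}_\alpha$-module, not merely a $\hat{B}_\alpha$-module.

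First I would establish the $\hat{L}_\alpha$-equivariant isomorphism of sheaves
\[
\mathcal{L}(V \otimes \mathbb{C}_\lambda) \;\cong\; V \otimes_{\mathbb{C}} \mathcal{L}(\mathbb{C}_\lambda)
\]
on $\hat{L}_\alpha/\hat{B}_\alpha$, where on the right $V$ denotes the trivial rank-$(\dim V)$ vector bundle equipped with the diagonal $\hat{L}_\alpha$-action (which makes sense precisely because $V$ extends from $\hat{B}_\alpha$ to $\hat{L}_\alpha$). The explicit map $[g, v \otimes z] \mapsto (g\hat{B}_\alpha,\, g\cdot v,\, [g,z])$ is well-defined and $\hat{L}_\alpha$-equivariant, as one checks directly from the defining equivalence $(g,v) \sim (gb, \phi(b^{-1})v)$. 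Taking cohomology and using that the first factor on the right is, as a plain $\mathcal{O}$-module, a constant sheaf with fibre $V$, yields the K\"unneth-type identification
\[
H^i(\hat{L}_\alpha/\hat{B}_\alpha,\, V \otimes \mathbb{C}_\lambda) \;\cong\; V \otimes_{\mathbb{C}} H^i(\hat{L}_\alpha/\hat{B}_\alpha,\, \mathbb{C}_\lambda)
\]
of $\hat{L}_\alpha$-modules.

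From here the three cases are mechanical. For (1), $\langle \lambda,\alpha\rangle \geq 0$ and Lemma \ref{lem 2.1}(1) give that $H^0(L_\alpha/B_\alpha, \mathbb{C}_\lambda)$ is the irreducible $\hat{L}_\alpha$-module of highest weight $\lambda$ with higher cohomology vanishing. For (3), Lemma \ref{lem 2.1}(4) directly gives vanishing of the entire right-hand tensor factor. For (2), $\langle \lambda,\alpha\rangle \leq -2$ forces $\langle s_\alpha\cdot\lambda, \alpha\rangle = -\langle \lambda,\alpha\rangle - 2 \geq 0$, so applying Lemma \ref{lem 2.1}(3) to the line bundle yields $H^1(s_\alpha, \lambda) \cong H^0(s_\alpha, s_\alpha\cdot \lambda)$ and $H^j(s_\alpha, \lambda)=0$ for $j\geq 2$; the vanishing of $H^0(s_\alpha, \lambda)$ is automatic since $\mathbb{C}_\lambda$ has negative degree on the $\mathbb{P}^1$ in question. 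The only real obstacle is the equivariant trivialization in the first step, and even there the difficulty is purely bookkeeping: one must check that the twist $g\cdot v$ in the chosen map precisely absorbs the $\hat{B}_\alpha$-action on the $V$-tensor factor, so that the equivalence relation defining $\hat{L}_\alpha \times_{\hat{B}_\alpha} (V \otimes \mathbb{C}_\lambda)$ collapses onto the one defining $\mathcal{L}(\mathbb{C}_\lambda)$.
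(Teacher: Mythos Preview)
Your proof is correct and follows essentially the same route as the paper: first establish the $\hat{L}_\alpha$-equivariant isomorphism $H^j(L_\alpha/B_\alpha, V\otimes\mathbb{C}_\lambda)\cong V\otimes H^j(L_\alpha/B_\alpha,\mathbb{C}_\lambda)$, then invoke Lemma~\ref{lem 2.1} with $w=s_\alpha$. The only cosmetic difference is that the paper outsources the tensor-product step to Jantzen's book (I, Propositions~4.8 and~5.12), whereas you write out the equivariant trivialization $[g,v\otimes z]\mapsto(g\hat{B}_\alpha,\,g\cdot v,\,[g,z])$ explicitly; the content is identical.
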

	
	\begin{proof}
		
		By \cite[I, Proposition 4.8, p.53]{Jan} and \cite[I, Proposition 5.12, p.77]{Jan} for $j\ge0$,
		we have the following isomorphism as 
		$\hat{L_{\alpha}}$-modules:
		$$H^j(L_{\alpha}/B_{\alpha}, V \otimes \mathbb C_{\lambda})\simeq V \otimes
		H^j(L_{\alpha}/B_{\alpha}, \mathbb C_{\lambda}).$$ 
		Now the proof of the lemma follows from Lemma \ref{lem 2.1} by taking $w=s_{\alpha}$ 
		and the fact that $L_{\alpha}/B_{\alpha} \simeq P_{\alpha}/B$.
	\end{proof}
	
	We now state the following lemma on indecomposable 
	$\hat{B_{\alpha}}$ (respectively,  $B_{\alpha}$) modules which will be used in computing 
	the cohomology modules (see \cite [Corollary 9.1, p.30]{BKS}).
	
	\begin{lemma}\label{lem 3.2}
		\begin{enumerate}
			\item
			Any finite dimensional indecomposable $\hat{B_{\alpha}}$-module $V$ is isomorphic to 
			$V^{\prime}\otimes \mathbb{C}_{\lambda}$ for some irreducible representation
			$V^{\prime}$ of $\hat{L_{\alpha}}$, and some character $\lambda$ of $\hat{B_{\alpha}}$.
			
			\item
			Any finite dimensional indecomposable $B_{\alpha}$-module $V$ is isomorphic to 
			$V^{\prime}\otimes \mathbb{C}_{\lambda}$ for some irreducible representation
			$V^{\prime}$ of $\hat{L_{\alpha}}$, and some character $\lambda$ of $\hat{B_{\alpha}}$.
		\end{enumerate}
	\end{lemma}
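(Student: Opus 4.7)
The plan is to prove (1) directly and to deduce (2) by observing that any rational $B_\alpha$-module pulls back along the central isogeny $\pi\colon\hat{G}\to G$ to a rational $\hat{B_\alpha}$-module on the same underlying vector space with the same lattice of submodules; indecomposability is therefore preserved, and the description supplied by (1) is exactly of the form demanded by (2).

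For (1), I would use the Levi decomposition $\hat{B_\alpha}=\hat{T}\ltimes U_\alpha$, where $U_\alpha\cong\mathbb{G}_a$ is the root subgroup attached to $\alpha$. A finite dimensional rational representation of $\hat{B_\alpha}$ on $V$ is then equivalent to a pair $(V=\bigoplus_\mu V_\mu,\ N)$, where the first factor is a $\hat{T}$-weight decomposition and $N\in\mathrm{End}(V)$ is a nilpotent endomorphism (the $U_\alpha$-action being $t\mapsto\exp(tN)$) satisfying $N(V_\mu)\subseteq V_{\mu+\alpha}$ for every $\mu$. A $\hat{B_\alpha}$-submodule is then precisely a subspace that is both $\hat{T}$-graded and $N$-stable.

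The main step is to show that, if $V$ is indecomposable, $N$ consists of a single Jordan block. Since $N$ is a graded nilpotent endomorphism of degree $\alpha$, its Jordan basis can be chosen to consist of $\hat{T}$-weight vectors: each graded Jordan chain $v,Nv,\ldots,N^r v$ with $v\in V_\mu$ spans a subspace with weights $\mu,\mu+\alpha,\ldots,\mu+r\alpha$ that is manifestly both $\hat{T}$- and $N$-stable, hence a $\hat{B_\alpha}$-submodule. Assembling these chains decomposes $V$ as a direct sum of $\hat{B_\alpha}$-submodules, so indecomposability forces a single chain, of some length $r+1$ with top weight $\nu\in X(\hat{T})$.

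It remains to identify such a single-chain module with $V'\otimes\mathbb{C}_\lambda$. I would pick the dominant weight $\chi=r\omega_\alpha\in X(\hat{T})$, which satisfies $\langle\chi,\alpha\rangle=r$, and let $V'$ be the irreducible $\hat{L_\alpha}$-module of highest weight $\chi$ (realised via $\psi\colon SL(2,\mathbb{C})\to L_\alpha$); its restriction to $\hat{B_\alpha}$ is itself a single $N$-chain with weights $\chi,\chi-\alpha,\ldots,\chi-r\alpha$. Setting $\lambda=\nu-\chi\in X(\hat{T})$, the tensor product $V'\otimes\mathbb{C}_\lambda$ has exactly the same graded chain as $V$, and since a graded single-chain module is determined up to isomorphism by its weight sequence (each weight space is one-dimensional and $N$ identifies adjacent ones), we conclude $V\cong V'\otimes\mathbb{C}_\lambda$. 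The main obstacle is the graded Jordan decomposition used in the previous paragraph: one must verify, by an induction on $\dim V$ respecting the $\hat{T}$-grading, that the usual Jordan basis of $N$ can be realised by weight vectors, which follows from the fact that $N$ preserves the kernel filtration $\ker N\subseteq\ker N^2\subseteq\cdots$ weight by weight.
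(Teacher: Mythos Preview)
Your argument is correct. For part~(2) you reduce to~(1) by pulling back along the isogeny $\pi\colon\hat G\to G$, which is exactly the paper's approach. For part~(1) the paper gives no argument of its own and simply cites \cite[Corollary~9.1, p.30]{BKS}; your direct proof via the graded Jordan decomposition of the nilpotent $N$ supplies the content behind that citation and is the natural self-contained route. In that sense you have provided more than the paper does.

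One minor conventional point: in this paper $B$ is the Borel \emph{opposite} to $B^{+}$, so $B_{\alpha}=L_{\alpha}\cap B=T\cdot U_{-\alpha}$ and hence $\hat{B_{\alpha}}=\hat T\ltimes U_{-\alpha}$. The nilpotent $N$ therefore shifts weights by $-\alpha$ rather than $+\alpha$. This does not affect the structure of your argument at all; it only swaps which end of the Jordan chain you should call the ``top'' weight when matching with $V'\otimes\mathbb{C}_{\lambda}$.
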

	\begin{proof}
		Proof of part (1) follows from \cite [Corollary 9.1, p.30]{BKS}.
		
		Proof of part (2) follows from the fact that every $B_{\alpha}$-module can be viewed  as a 
		$\hat{B_{\alpha}}$-module via the natural homomorphism. 
	\end{proof}
	
	Here, we prove a Lemma that we use later. The following Lemma is independent of type of $G.$ 
	\begin{lemma}\label{lem 5.4} Let $v\in W.$
		If $H^0(v, \mathfrak{b})_{-\alpha_{i}}\neq 0,$ then $\mathbb{C}h(\alpha_{i})\subseteq H^0(v, \mathfrak{b}).$ Hence, the two dimensional $B_{\alpha_{i}}$-module $\mathbb{C}h(\alpha_{i})\oplus \mathbb{C}_{-\alpha_{i}}$ is a direct summand of $H^0(v, \mathfrak{b}).$
	\end{lemma}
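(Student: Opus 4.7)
The plan is to decompose $H^{0}(v, \mathfrak{b})$ as a $B_{\alpha_{i}}$-module via Lemma \ref{lem 3.2}(2) and identify the indecomposable summand $M\cong V'\otimes \mathbb{C}_{\lambda}$ that contains the hypothesized nonzero weight $-\alpha_{i}$ vector. Since the weights of $M$ form a consecutive $\alpha_{i}$-string and $-\alpha_{i}$ is among them, every weight of $M$ is an integer multiple of $\alpha_{i}$.

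The first step will be to realize $H^{0}(v, \mathfrak{b})$ as a $B$-submodule of $\mathfrak{b}$. For this I would use that $\mathfrak{g}$ is a $G$-module, so $\mathcal{L}(\mathfrak{g})$ is the trivial bundle on $G/B$ and $H^{0}(v, \mathfrak{g})=\mathfrak{g}$; together with the inclusion $\mathcal{L}(\mathfrak{b})\hookrightarrow \mathcal{L}(\mathfrak{g})$ and evaluation at the base point $eB$, this identifies $H^{0}(v,\mathfrak{b})$ with a $B$-submodule of $\mathfrak{b}$. Consequently the weights of $H^{0}(v,\mathfrak{b})$ all lie in $\{0\}\cup \{-\beta:\beta\in R^{+}\}$; since $\pm k\alpha_{i}$ is not a root for $k\ge 2$, the only integer multiples of $\alpha_{i}$ occurring as weights of $\mathfrak{b}$ are $0$ and $-\alpha_{i}$, so $M$ is either the one-dimensional module $\mathbb{C}_{-\alpha_{i}}$ or the two-dimensional indecomposable module $\mathbb{C}h(\alpha_{i})\oplus \mathbb{C}_{-\alpha_{i}}$.

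The main obstacle is to rule out the one-dimensional case, which amounts to producing some $h\in H^{0}(v,\mathfrak{b})\cap \mathfrak{h}$ with $\alpha_{i}(h)\neq 0$. Put $S=\{\beta\in R^{+}: f_{\beta}\in H^{0}(v, \mathfrak{b})\}$. By $B$-stability, $H^{0}(v, \mathfrak{b})$ contains the $B$-submodule of $\mathfrak{u}^{-}$ generated by $f_{\alpha_{i}}$, which equals $\bigoplus_{\beta\in R^{+},\,\beta\ge \alpha_{i}}\mathfrak{g}_{-\beta}$; hence every positive root with nonzero $\alpha_{i}$-coefficient lies in $S$, and $R^{+}\setminus S$ is contained in $\mathrm{span}_{\mathbb{Q}}\{\alpha_{j}: j\neq i\}$. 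A direct analysis of $\mathrm{Ad}(b)^{-1}h$ for $b\in B$ and $h\in \mathfrak{h}$ will identify $H^{0}(v,\mathfrak{b})\cap \mathfrak{h}$ with $\bigcap_{\beta\in R^{+}\setminus S}\ker(\beta)$, and since $\alpha_{i}\notin \mathrm{span}\{\alpha_{j}: j\neq i\}$ by linear independence of the simple roots, this intersection is not contained in $\ker(\alpha_{i})$. Any $h$ in it with $\alpha_{i}(h)\neq 0$ then satisfies $\mathrm{ad}(f_{\alpha_{i}})(h)=\alpha_{i}(h)f_{\alpha_{i}}\neq 0$, so $\mathbb{C}h\oplus \mathbb{C}f_{\alpha_{i}}$ is a two-dimensional indecomposable $B_{\alpha_{i}}$-submodule of $H^{0}(v,\mathfrak{b})$ isomorphic to $\mathbb{C}h(\alpha_{i})\oplus \mathbb{C}_{-\alpha_{i}}$; by Lemma \ref{lem 3.2}(2) it is in fact a direct summand. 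The hardest step will be the reverse inclusion in the characterization $H^{0}(v,\mathfrak{b})\cap \mathfrak{h}=\bigcap_{\beta\notin S}\ker(\beta)$, which requires carefully controlling the higher-order contributions to $\mathrm{Ad}(\exp(n))h$ for $n\in \mathfrak{u}^{-}$.
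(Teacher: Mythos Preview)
Your approach is genuinely different from the paper's (which proceeds by induction on $\ell(v)$), and the reduction steps are sound: the indecomposable $B_{\alpha_i}$-summand containing the $-\alpha_i$ weight vector indeed has weights among $\{0,-\alpha_i\}$, so it is one- or two-dimensional, and the conclusion of the lemma amounts to finding some $h\in H^{0}(v,\mathfrak{b})\cap\mathfrak{h}$ with $\alpha_i(h)\neq 0$. Your observation that $R^{+}\setminus S$ consists of roots supported away from $\alpha_i$ is also correct and useful.

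The gap is exactly where you locate it, but it is more serious than you indicate. The easy inclusion is $H^{0}(v,\mathfrak{b})\cap\mathfrak{h}\subseteq\bigcap_{\beta\notin S}\ker(\beta)$, and this is the wrong direction for your purposes: you need the \emph{reverse} inclusion to conclude that $H^{0}(v,\mathfrak{b})\cap\mathfrak{h}$ is large. Your proposed method for the reverse inclusion---``controlling the higher-order contributions to $\mathrm{Ad}(\exp(n))h$ for $n\in\mathfrak{u}^{-}$''---cannot work as stated. For $n\in\mathfrak{u}^{-}$ one has $\exp(n)\in B$, and $\mathrm{Ad}(B)$ preserves $\mathfrak{b}$; thus $\mathrm{Ad}(\exp(n))h\in\mathfrak{b}$ automatically, and this computation sees only the $B$-module structure of $\mathfrak{b}$, not the specific submodule $H^{0}(v,\mathfrak{b})$. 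The actual membership condition $h\in H^{0}(v,\mathfrak{b})=\bigcap_{gB\in X(v)}\mathrm{Ad}(g)\mathfrak{b}$ involves $g$ ranging over $\overline{BvB}$, and it is the Weyl group representatives $n_u$ for $u\le v$ that impose the nontrivial constraints. There are many $B$-submodules $M\subseteq\mathfrak{b}$ with the same root part $\bigoplus_{\beta\in S}\mathfrak{g}_{-\beta}$ but with $M\cap\mathfrak{h}$ strictly smaller than $\bigcap_{\beta\notin S}\ker(\beta)$; you have not used anything that distinguishes $H^{0}(v,\mathfrak{b})$ among these.

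By contrast, the paper's induction avoids any global description of $H^{0}(v,\mathfrak{b})\cap\mathfrak{h}$. Writing $v=s_{\alpha}(s_{\alpha}v)$ with $\ell(v)=\ell(s_{\alpha}v)+1$, one first argues $\alpha\neq\alpha_i$ (any $\hat{B}_{\alpha_i}$-summand with a $-\alpha_i$ weight has vanishing $H^{0}(s_{\alpha_i},-)$), then uses $H^{0}(v,\mathfrak{b})\subseteq H^{0}(s_{\alpha}v,\mathfrak{b})$ to apply the induction hypothesis to $s_{\alpha}v$, and finally carries the zero-weight vector through $H^{0}(s_{\alpha},-)$ via Lemma~\ref{lem 3.1}(1).
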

	\begin{proof}
		We prove by induction on $\ell(v).$ Assume that $\ell(v)>0.$ Choose a simple root $\alpha$  such that $s_{\alpha}v<v.$ Then by SES $H^0(v,\mathfrak{b})_{-\alpha_{i}}=H^0(s_{\alpha}, H^0(s_{\alpha}v,\mathfrak{b}))_{-\alpha_{i}}\neq 0.$ Then there exists an indecomposable $\hat{B}_{\alpha}$-summand $V$ of $H^0(s_{\alpha}v,\mathfrak{b})$ such that $H^0(s_{\alpha}, V)_{-\alpha_{i}}\neq 0.$ Note that $\mathbb{C}h(\alpha_{i})\oplus \mathbb{C}_{-\alpha_{i}}$ is a $B_{\alpha_{i}}$-direct summand of $\mathfrak{b}$ we have $\alpha\neq \alpha_{i}.$ Therefore, we have  $\langle -\alpha_{i}, \alpha\rangle \ge 0.$ Since  $H^0(v,\mathfrak{b})\subseteq H^0(s_{\alpha}v, \mathfrak{b}),$ we have $H^0(s_{\alpha}v, \mathfrak{b})_{-\alpha_{i}}\neq 0.$ Therefore, by induction hypothesis $\mathbb{C}h(\alpha_{i})\subseteq H^0(s_{\alpha}v,\mathfrak{b}).$ Thus $\mathbb{C}h(\alpha_{i})$ is one dimensional $\hat{L}_{\alpha}$-submodule of $H^0(s_{\alpha}v,\mathfrak{b}).$ Hence, by Lemma \ref{lem 3.1}(1), $\mathbb{C}h(\alpha_{i})\subseteq H^0(v,\mathfrak{b}).$ Therefore, the statement follows.
		
	\end{proof}

	\section{Some properties of co-minuscule root}
	Now onwards till the end of section 8, we will assume that $G$ is non-simply-laced. In this section, we prove some properties of co-minuscule simple roots.
	\begin{lemma}\label{Lemma 1.1}
		Every co-minuscule root is a long root.
	\end{lemma}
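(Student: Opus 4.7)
The plan is to leverage the identity $\check{c}_i = \frac{(\alpha_i,\alpha_i)}{(\alpha_0,\alpha_0)}\, c_i$ recalled just before Proposition \ref{Prop 2.1}, which relates the coefficients of the highest root and the highest coroot. The point is that this formula cleanly packages the tension between ``coefficient $1$ in $\alpha_0$'' (co-minuscule) and ``short'' simple root, because $\alpha_0$ is itself a long root.

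First I would fix a co-minuscule simple root $\alpha=\alpha_j$, so that by definition $c_j=1$. Plugging into the formula yields
\[
\check{c}_j \;=\; \frac{(\alpha_j,\alpha_j)}{(\alpha_0,\alpha_0)}.
\]
Since $\alpha_0$ is the highest root it is long; hence the ratio on the right equals $1$ when $\alpha_j$ is long and is strictly less than $1$ (in fact $1/2$ in types $B_n,C_n,F_4$ and $1/3$ in type $G_2$) when $\alpha_j$ is short.

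The one fact I need beyond the displayed formula is that $\check{c}_j$ is a positive integer. This follows because $\check{\alpha}_0$, being a coroot, lies in the coroot lattice $\bigoplus_i\mathbb{Z}\check{\alpha}_i$, and because $\alpha_0$ is a positive root the expansion of $\check{\alpha}_0$ in the simple coroots has non-negative coefficients; combined with $c_j=1>0$ and the formula above, this forces $\check{c}_j\in\mathbb{Z}_{\ge 1}$. (Consistency check: summing gives the dual Coxeter numbers in Table 2, all integers.)

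Putting the two observations together, $\check{c}_j$ is a positive integer that is at most $1$, so $\check{c}_j=1$, hence $(\alpha_j,\alpha_j)=(\alpha_0,\alpha_0)$ and $\alpha_j$ is long. There is no real obstacle here: the argument is a one-line consequence of the coefficient identity once the integrality of $\check{c}_j$ is noted, and it is uniform across all non-simply-laced types (covering the only case in force by the assumption made at the start of Section 3).
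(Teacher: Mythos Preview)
Your argument is correct and is genuinely different from the paper's. The paper argues by contradiction in a hands-on way: assuming $\alpha$ is short and co-minuscule, it walks along the Dynkin diagram from $\alpha$ through consecutive short simple roots $\alpha_{i_1},\dots,\alpha_{i_{r-1}}$ until it hits the first long simple root $\alpha_{i_r}$, then forms $\beta=s_{i_1}\cdots s_{i_{r-1}}(\alpha_{i_r})$ and checks that $s_\alpha(\beta)$ is a positive root in which $\alpha$ appears with coefficient at least $2$, contradicting the co-minuscule hypothesis. Your route instead exploits the identity $\check{c}_j=\dfrac{(\alpha_j,\alpha_j)}{(\alpha_0,\alpha_0)}\,c_j$ already recorded in Section~2: with $c_j=1$ and $\check{c}_j\in\mathbb{Z}_{\ge 1}$ (because $\check{\alpha}_0$ is a positive coroot and $G$ is simple), the inequality $(\alpha_j,\alpha_j)\le(\alpha_0,\alpha_0)$ forces equality. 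Your proof is shorter, uniform across types, and uses only material the paper has already set up; the paper's proof, by contrast, is more self-contained in that it does not invoke the fact that $\alpha_0$ is long or that coroots have integral coordinates in the simple coroots, but it pays for this with an explicit Dynkin-diagram construction. Either approach is perfectly adequate for this lemma.
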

	\begin{proof}
		Assume that $\alpha$ is a co-minuscule root. If possible suppose that $\alpha$ is a short root. Then there is a smallest positive integer $r$ such that $\alpha, \alpha_{i_{1}},\alpha_{i_{2}},...,\alpha_{i_{r}}$ are consecutive nodes with $\alpha_{i_{1}},\alpha_{i_{2}},...,\alpha_{i_{r-1}}$ are short roots and $\alpha_{i_{r}}$ is a long root. Since $\alpha_{i_{r}}$ is a long root, $\langle  \alpha_{i_{r}},\alpha_{i_{r-1}} \rangle \le -2.$ Let $\beta=s_{i_{1}}\cdots s_{i_{r-1}}(\alpha_{i_{r}}).$ Then $\beta =s_{i_{1}}\cdots s_{i_{r-1}}(\alpha_{i_{r}})=\alpha_{i_{r}}+ a(\sum\limits_{j=i}^{r-1}\alpha_{i_{j}})$ where $a\ge 2.$ Since $\alpha,$ $\alpha_{i_{1}},\ldots ,\alpha_{i_{r-1}}$ are short roots such that $\alpha,\alpha_{i_{1}},\alpha_{i_{2}},...,\alpha_{i_{r}}$ are consecutive nodes, we have $\langle \alpha_{i_{1}}, \alpha \rangle =-1,$ and $\langle \alpha_{i_{j}}, \alpha\rangle=0$ for all $2\le j\le r.$ Therefore, $\beta$ is a positive root such that $\langle \beta, \alpha\rangle=-a\le -2.$ Thus $s_{\alpha}(\beta)$ is a positive root such that coefficient of $\alpha$ in $s_{\alpha}(\beta)$ is at least 2, which is a contradiction to the fact that $\alpha$ is co-minuscule simple root. 
	\end{proof}
	\begin{lemma}\label{lem 2.3}
		$\alpha_{r}$ is  co-minuscule if and only if $w_{0,S \setminus \{\alpha_{r}\}}(\alpha_{r})=\alpha_{0}.$ 
	\end{lemma}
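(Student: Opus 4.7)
The plan is to reduce the lemma to identifying the extremal weights of an irreducible representation of a Levi subgroup. Set $J := S \setminus \{\alpha_r\}$ and $w := w_{0,J}$. Since $w \in W_J$, for any root $\beta$ the difference $w(\beta) - \beta$ lies in the $\mathbb{Z}$-span of $J$, so the coefficient of $\alpha_r$ in $w(\beta)$ equals that in $\beta$. In particular, $w(\alpha_r)$ is a positive root whose $\alpha_r$-coefficient equals $1$. This observation immediately handles the ``only if'' direction: $w(\alpha_r) = \alpha_0$ forces the coefficient of $\alpha_r$ in $\alpha_0$ to be $1$, which is exactly the definition of $\alpha_r$ being co-minuscule.

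For the forward direction, assume $\alpha_r$ is co-minuscule. Then no root has $\alpha_r$-coefficient outside $\{-1,0,1\}$, so grading by the $\alpha_r$-coefficient gives a $\mathbb{Z}$-grading
\[
\mathfrak{g} = \mathfrak{g}_{-1} \oplus \mathfrak{g}_0 \oplus \mathfrak{g}_1,
\]
in which $\mathfrak{g}_{\pm 1}$ are abelian and $\mathfrak{g}_0$ is the Lie algebra of the Levi subgroup $L_J$ of $P_J$. The heart of the argument will be to prove that $\mathfrak{g}_1$ is irreducible as an $L_J$-module. To do this, I would take any nonzero $\mathfrak{g}_0$-submodule $V \subseteq \mathfrak{g}_1$ and form
\[
\widetilde{V} := V \oplus [\mathfrak{g}_{-1}, V] \oplus [\mathfrak{g}_{-1}, [\mathfrak{g}_{-1}, V]];
\]
using $[\mathfrak{g}_{\pm 1}, \mathfrak{g}_{\pm 1}] = 0$ together with the Jacobi identity, one checks that $\widetilde{V}$ is closed under bracketing with each of $\mathfrak{g}_{-1}$, $\mathfrak{g}_0$, and $\mathfrak{g}_1$, hence is a $\mathfrak{g}$-submodule of the adjoint representation. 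Since $\mathfrak{g}$ is simple, $\widetilde{V} = \mathfrak{g}$, forcing $V = \mathfrak{g}_1$.

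Once irreducibility is in place, the remaining step is routine: the root vector $e_{\alpha_0}$ is killed by every $e_\alpha$ with $\alpha \in R^+$ (because $\alpha_0$ is the highest root), so it is a highest weight vector of $\mathfrak{g}_1$ with weight $\alpha_0$; and $e_{\alpha_r}$ is killed by every $f_\alpha$ with $\alpha \in J$ (because $\alpha_r - \alpha$ is not a root for any simple $\alpha \in J$), so it is a lowest weight vector with weight $\alpha_r$. In any irreducible representation of a reductive group, the longest element of the Weyl group carries the lowest weight to the highest, and applying this to $w = w_{0,J}$ gives $w(\alpha_r) = \alpha_0$, as required. The main obstacle in carrying out this plan is the irreducibility of $\mathfrak{g}_1$ as an $L_J$-module; everything afterwards is a standard consequence of the representation theory of reductive groups.
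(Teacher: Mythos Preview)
Your argument is correct, aside from a harmless swap of labels in the first paragraph (what you call the ``only if'' direction is actually the ``if'' direction). The paper takes a different, purely root-theoretic route for the forward implication: it first proves separately (Lemma~\ref{Lemma 1.1}) that every co-minuscule simple root is long, then observes that $-\alpha_r$ is $L_{S\setminus\{\alpha_r\}}$-dominant so $w_{0,S\setminus\{\alpha_r\}}(-\alpha_r)$ is $L_{S\setminus\{\alpha_r\}}$-antidominant, and argues by contradiction that $\langle w_{0,S\setminus\{\alpha_r\}}(-\alpha_r),\alpha_r\rangle\le 0$ (otherwise applying $s_r$ would yield a root with $\alpha_r$-coefficient $\le -2$, violating co-minusculeness). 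Thus $w_{0,S\setminus\{\alpha_r\}}(\alpha_r)$ is dominant, and being a long root it must equal $\alpha_0$. Your route through the irreducibility of the abelian nilradical $\mathfrak{g}_1$ as an $L_J$-module is more structural and gives a clean conceptual explanation of why $\alpha_r$ and $\alpha_0$ are the extremal weights; it also works uniformly without appealing to the auxiliary lemma on root lengths. The paper's argument, by contrast, stays entirely at the level of roots and reflections and requires no representation theory beyond the classification of dominant roots.
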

	\begin{proof}
		Assume that $\alpha_{r}$ is co-minuscule, i.e., the coefficient of $\alpha_{r}$ in the expression of $\alpha_{0}$ is $1.$  Note that $-\alpha_{r}$ is $L_{S\setminus\{\alpha_{r}\}}$ dominant. Therefore, $w_{0, S\setminus\{\alpha_{r}\}}(-\alpha_{r})$ is $L_{S\setminus\{\alpha_{r}\}}$ negative dominant. Further, the coefficient of $\alpha_{r}$ in $w_{0,S\setminus\{\alpha_{r}\}}(-\alpha_{r})$ is $-1.$ On the other hand, if $\langle w_{0, S\setminus\{\alpha_{r}\} }(-\alpha_{r}), \alpha_{r}\rangle\ge 1,$ then the coefficient of $\alpha_{r}$ in the expression of $s_{r}(w_{0,S\setminus\{\alpha_{r}\}}(-\alpha_{r}))$ is $\le- 2.$ Since $-\alpha_{0}\le s_{r}w_{0, S\setminus \{\alpha_{r}\}}(-\alpha_{r}),$ the coefficient of $\alpha_{r}$ in the expression of $-\alpha_{0}$ is $\le -2.$ This is a contradiction to the hypothesis that $\alpha_{r}$ is co-minuscule.  Hence, $w_{0, S\setminus\{\alpha_{r}\}}(-\alpha_{r})$ is negative dominant. Further, by Lemma \ref{Lemma 1.1}, $w_{0, S\setminus \{\alpha_{r}\}}(\alpha_{r})$ is a long root. Therefore, we have  $w_{0, S\setminus\{\alpha_{r}\}}(\alpha_{r})=\alpha_{0}.$
		
		Conversely, since $w_{0,S\setminus\{\alpha_{r}\}}(\alpha_{r})=\alpha_{0},$ the coefficient of $\alpha_{r}$ in the expression of $\alpha_{0}$ is $1.$ Therefore, $\alpha_{r}$ is co-minuscule.
	\end{proof}
	Since $G$ is non-simply-laced, the Dynkin diagram automorphism induced by $-w_{0}$ is the identity automorphism of the Dynkin diagram (see \cite[p.216, p.217, p.233]{Bou}).
	
	\begin{lemma} \label{lemma 1.2}
		Assume that $\alpha_{r}$ is co-minuscule.
		Let $v\in W^{S\setminus\{\alpha_{r}\}}.$ Then we have $v={w_{0}^{S\setminus\{\alpha_{r}\}}}$ if and only if $v(\alpha_{0})<0.$ 
	\end{lemma}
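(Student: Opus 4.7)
I would prove the two implications separately, using Lemma~\ref{lem 2.3} together with the remark recalled just above the lemma that in the non-simply-laced case $-w_{0}$ acts trivially on the Dynkin diagram, so $w_{0}(\alpha_{i})=-\alpha_{i}$ for every simple root $\alpha_{i}$.

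For the forward direction, write $v=w_{0}^{S\setminus\{\alpha_{r}\}}=w_{0}\cdot w_{0,S\setminus\{\alpha_{r}\}}$. Since $w_{0,S\setminus\{\alpha_{r}\}}$ is an involution, Lemma~\ref{lem 2.3} gives $w_{0,S\setminus\{\alpha_{r}\}}(\alpha_{0})=\alpha_{r}$, and therefore $v(\alpha_{0})=w_{0}(\alpha_{r})=-\alpha_{r}<0$.

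For the converse, set $J:=S\setminus\{\alpha_{r}\}$ and suppose $v\in W^{J}$ with $v(\alpha_{0})<0$. Write $N(v):=\{\gamma\in R^{+}:v(\gamma)<0\}$, so $\ell(v)=|N(v)|$. The membership $v\in W^{J}$ forces $v(R^{+}_{J})\subset R^{+}$, and hence $N(v)\subseteq R^{+}\setminus R^{+}_{J}$; co-minuscularity of $\alpha_{r}$ identifies the latter set with $\Omega:=\{\beta\in R^{+}:c_{r}(\beta)=1\}$, the set of positive roots whose $\alpha_{r}$-coefficient equals $1$. I would then prove the reverse inclusion $\Omega\subseteq N(v)$. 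Fix $\beta\in\Omega$; by the saturation property of the root poset (Bourbaki, Ch.~VI, \S1, Proposition~19) there is a chain of positive roots $\beta=\gamma_{0}<\gamma_{1}<\cdots<\gamma_{m}=\alpha_{0}$ with each difference $\gamma_{j+1}-\gamma_{j}$ a simple root. Along this increasing chain the coefficient $c_{r}$ is non-decreasing and bounded above by $1$, and since $c_{r}(\gamma_{0})=c_{r}(\gamma_{m})=1$ we have $c_{r}(\gamma_{j})=1$ throughout; in particular every increment $\gamma_{j+1}-\gamma_{j}$ lies in $J$. Descending from $\gamma_{m}$ to $\gamma_{0}$ by induction on $j$, $v(\gamma_{j})=v(\gamma_{j+1})-v(\gamma_{j+1}-\gamma_{j})$ is the sum of a negative root (by the inductive hypothesis) and the negative of a positive root (since $v\in W^{J}$ and $\gamma_{j+1}-\gamma_{j}\in J$), hence is itself a negative root. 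In particular $v(\beta)<0$, giving $\Omega\subseteq N(v)$.

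Combining the two inclusions, $N(v)=\Omega$, so $\ell(v)=|\Omega|=\ell(w_{0}^{J})$; since $w_{0}^{J}$ is the unique element of maximal length in $W^{J}$, this forces $v=w_{0}^{J}$. I expect the most delicate point to be the chain-of-roots step: one must ensure that Bourbaki's saturation chain stays inside $\Omega$, which is exactly where the co-minuscule hypothesis $c_{r}\le 1$ is used.
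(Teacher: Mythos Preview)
Your forward implication matches the paper's verbatim. For the converse, however, you take a genuinely different route. The paper argues via Bruhat order: from $w_{0}^{J}(\alpha_{0})=-\alpha_{r}$ one gets $s_{r}w_{0}^{J}(\alpha_{0})=\alpha_{r}$, and then uses that $u\le v$ in Bruhat order implies $v(\alpha_{0})\le u(\alpha_{0})$ (monotonicity for a dominant weight) to deduce $v\nleq s_{r}w_{0}^{J}$; since $|S\setminus J|=1$, the element $s_{r}w_{0}^{J}$ is the \emph{unique} coatom of $w_{0}^{J}$ in $W^{J}$, forcing $v=w_{0}^{J}$. Your argument instead pins down the inversion set $N(v)$ exactly as $\Omega=\{\beta\in R^{+}:c_{r}(\beta)=1\}$ by a root-chain induction, and then concludes by comparing lengths. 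Both are valid; the paper's proof is shorter but leans on two standard facts it does not make explicit, while yours is more self-contained and makes the role of the co-minuscule hypothesis completely transparent. One small point to tighten: Bourbaki~VI, \S1, Prop.~19 produces a chain from a simple root \emph{up to} a given positive root, not from a given $\beta$ up to $\alpha_{0}$. The statement you actually need---that every positive root $\beta\neq\alpha_{0}$ admits a simple $\alpha$ with $\beta+\alpha\in R^{+}$---is true (for instance by irreducibility of the adjoint representation, since otherwise $\mathfrak{g}_{\beta}$ would be a highest-weight vector), but it deserves a sentence of justification rather than the Prop.~19 citation.
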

	\begin{proof}
		Assume that $v=w_{0}^{S\setminus\{\alpha_{r}\}}.$ Since  $w_{0}=w_{0}^{S\setminus\{\alpha_{r}\}}w_{0, S\setminus \{ \alpha_{r}\}},$  we have $w_{0}^{S\setminus\{\alpha_{r}\}}=w_{0} w_{0,S\setminus\{\alpha_{r}\}}.$ Therefore, by using Lemma \ref{lem 2.3}, we have $w_{0}^{S\setminus\{\alpha_{r}\}}(\alpha_{0})=w_{0}(\alpha_{r})=-\alpha_{r}.$ 
		
		Conversely, assume that $v(\alpha_{0})<0.$ Then by using Lemma \ref{lem 2.3}, we have $s_{r}{w_{0}}^{S\setminus\{\alpha_{r}\}}(\alpha_{0})=\alpha_{r}.$ Therefore, we have $v \nleq s_{r}w_{0}^{S\setminus\{\alpha_{r}\}}.$ Otherwise $v(\alpha_{0})\ge s_{r}w_{0}^{S\setminus\{\alpha_{r}\}}(\alpha_{0})=\alpha_{r}.$ Therefore, $v(\alpha_{0})$ is a positive root, which is a contradiction to the hypothesis that $v(\alpha_{0})<0.$ Since $v,s_{r}w_{0}^{S\setminus\{\alpha_{r}\}}\in W^{S\setminus\{\alpha_{r}\}},$ we have $v=w_{0}^{S\setminus\{\alpha_{r}\}}.$  
	\end{proof}

	\section{preliminaries on non co-minuscule root}
	In this section, we prove a crucial lemma for a non co-minuscule root  of type $B,$ $C,$ $F_{4},$ or $G_{2}$ associated to $\alpha_{0}.$ We recall the Dynkin diagram of $B_{n}, C_{n}, F_{4}, G_{2}$ (see \cite[Theorem 11.4, p.57-58]{Hum1}):

	\vspace{.2cm}
	\setlength{\unitlength}{1.2cm}
	\begin{picture}(10,0)
		\thicklines
		\put(.2,0){\circle*{0.2}}
		\put(0,-0.5){$\alpha_{1}$}
		\put(1.2,0){\circle*{0.2}}
		\put(.2,0){\line(1,0){1}}
		\put(1.2,0){\line(1,0){1}}
		\put(1,-0.5){$\alpha_{2}$}
		\put(2.2,0){\circle*{0.2}}
		\put(2,-0.5){$\alpha_{3}$}
		\put(1.6,0){\line(1,0){1}}
		\put(7.2,0){\circle*{0.2}}
		\put(7,-0.5){$\alpha_{n-2}$}
		\put(8.2,0){\circle*{0.2}}
		\put(7.2,0){\line(1,0){1}}
		\put(8.2,0.1){\line(1,0){1}}
		\put(8.2,-0.1){\line(1,0){1}}
		\put(8.2,.1){\line(1,0){1}}
		\put(8,-0.5){$\alpha_{n-1}$}
		\put(8.5,.3){\line(1,-1){.3}}
		\put(8.5,-.3){\line(1,1){.3}}
		\put(9.2,0){\circle*{0.2}}
		\put(9,-.5){$\alpha_{n}$}
		\put(4,0){\circle*{0.2}}
		\put(3.9,-0.5){$\alpha_{i}$}
		\put(1.7,-1.9){Figure 1: Dynkin diagram of $B_{n}(n\ge 2).$}
		\put(1.7,-2.4){$\alpha_{0}=\alpha_{1}+2(\alpha_{2}+\alpha_{3}+\cdots +\alpha_{n}).$}
		\put(4,0){\line(1,0){1}}
		\put(5,0){\circle*{0.2}}
		\put(4.9,-0.5){$\alpha_{i+1}$}
		\put(4,0){\circle*{0.2}}
		\put(4,0){\line(1,0){1}}
		\put(5,0){\circle*{0.2}}
		\put(3.5,0){\line(1,0){1}}
		\put(4.5,0){\line(1,0){1}}
		\put(6.7,0){\line(1,0){1}}
	\end{picture}
	
	\vspace{3cm}
	\setlength{\unitlength}{1.2cm}
	\begin{picture}(10,0)
		\thicklines
		\put(.2,0){\circle*{0.2}}
		\put(0,-0.5){$\alpha_{1}$}
		\put(1.2,0){\circle*{0.2}}
		\put(.2,0){\line(1,0){1}}
		\put(1.2,0){\line(1,0){1}}
		\put(1,-0.5){$\alpha_{2}$}
		\put(2.2,0){\circle*{0.2}}
		\put(2,-0.5){$\alpha_{3}$}
		\put(1.6,0){\line(1,0){1}}
		\put(7.2,0){\circle*{0.2}}
		\put(7,-0.5){$\alpha_{n-2}$}
		\put(8.2,0){\circle*{0.2}}
		\put(7.2,0){\line(1,0){1}}
		\put(8.2,0.1){\line(1,0){1}}
		\put(8.2,-0.1){\line(1,0){1}}
		\put(8.2,.1){\line(1,0){1}}
		\put(8,-0.5){$\alpha_{n-1}$}
		\put(9.2,0){\circle*{0.2}}
		\put(8.6,0){\line(1,1){.3}}
		\put(8.6,0){\line(1,-1){.3}}
		\put(9,-.5){$\alpha_{n}$}
		\put(4,0){\circle*{0.2}}
		\put(3.9,-0.5){$\alpha_{i}$}
		\put(1.7,-1.9){Figure 2: Dynkin diagram of $C_{n}(n\ge 3).$}
		\put(1.7,-2.4){$\alpha_{0}=2(\alpha_{1}+\alpha_{2}+\cdots +\alpha_{n-1})+\alpha_{n}=2\omega_{1}.$}
		\put(4,0){\line(1,0){1}}
		\put(5,0){\circle*{0.2}}
		\put(4.9,-0.5){$\alpha_{i+1}$}
		\put(4,0){\circle*{0.2}}
		\put(4,0){\line(1,0){1}}
		\put(5,0){\circle*{0.2}}
		\put(3.5,0){\line(1,0){1}}
		\put(4.5,0){\line(1,0){1}}
		\put(6.7,0){\line(1,0){1}}
	\end{picture}
	
	\vspace{3cm}
	\hspace{3.5cm}
	\setlength{\unitlength}{1.4cm}
	\begin{picture}(10,0)
		\thicklines
		\put(.2,0){\circle*{0.2}}
		\put(0,-0.5){$\alpha_{1}$}
		\put(1.2,0){\circle*{0.2}}
		\put(.2,0){\line(1,0){1}}
		\put(1.2,-0.1){\line(1,0){1}}
		\put(1,-0.5){$\alpha_{2}$}
		\put(1.2,0.1){\line(1,0){1}}
		\put(1.5,0.2){\line(2,-1){.4}}
		\put(1.5,-0.2){\line(2,1){.4}}
		\put(2.2,0){\circle*{0.2}}
		\put(2,-0.5){$\alpha_{3}$}
		\put(2.2,0){\line(1,0){1}}
		\put(3.2,0){\circle*{0.2}}
		\put(3.1,-0.5){$\alpha_{4}$}
		\put(0,-1.5){Figure 3: Dynkin diagram of $F_{4}.$}
		\put(0,-1.9){$\alpha_{0}=2\alpha_{1}+3\alpha_{2}+4\alpha_{3}+2\alpha_{4}=\omega_{1}.$}
	\end{picture}
	
	\vspace{4cm}
	\hspace{4.5cm}
	\setlength{\unitlength}{1.4cm}
	\begin{picture}(10,0)
		\thicklines
		\put(.2,0){\circle*{0.3}}
		\put(0,-0.5){$\alpha_{2}$}
		\put(1.2,0){\circle*{0.3}}
		\put(.2,.1){\line(1,0){1}}
		\put(.5,-.3){\line(1,1){.3}}
		\put(.5,.3){\line(1,-1){.3}}
		\put(.2,0){\line(1,0){1}}
		\put(.2,-.1){\line(1,0){1}}
		\put(1,-0.5){$\alpha_{1}$}
		\put(-1.4,-1.5){Figure 4: Dynkin diagram of $G_{2}.$}
		\put(-.1,-1.9){$\alpha_{0}=3\alpha_{1}+2\alpha_{2}=\omega_{2}.$}
	\end{picture}
	\vspace{2.1cm}

	We now prove
	\begin{lemma}\label{Lemma 2.1}
		Assume that $G$ is of type $B,$ $C,$ $F_{4},$ or $G_{2}.$ Then we have $w_{0, S\setminus \{\alpha_{r}\}}(\alpha_{r})=\alpha_{0}-\alpha_{r},$ where $\alpha_{r}$ is the simple root such that $\alpha_{0}=\omega_{r}$ except for type $C$ and $B_{2}$ in these cases $\alpha_{0}=2\omega_{1}$ (respectively, $\alpha_{0}=2\omega_{2}$) and $\alpha_{r}=\alpha_{1}$ (respectively, $\alpha_{r}=\alpha_{2}$).
	\end{lemma}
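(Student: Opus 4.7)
The plan is to identify $w_{0,S\setminus\{\alpha_r\}}(\alpha_r)$ as the unique $L_{S\setminus\{\alpha_r\}}$-dominant element in the $W_{S\setminus\{\alpha_r\}}$-orbit of $\alpha_r$, and then verify that this element equals $\alpha_0-\alpha_r$. Since the longest element $w_{0,S\setminus\{\alpha_r\}}$ is an involution exchanging the Levi's dominant and anti-dominant Weyl chambers, and every $W_{S\setminus\{\alpha_r\}}$-orbit on roots meets each chamber in a unique point, the lemma reduces to three checks: (i) $\alpha_r$ is $L_{S\setminus\{\alpha_r\}}$-anti-dominant; (ii) $\alpha_0-\alpha_r$ is $L_{S\setminus\{\alpha_r\}}$-dominant; and (iii) the two roots $\alpha_r$ and $\alpha_0-\alpha_r$ lie in the same $W_{S\setminus\{\alpha_r\}}$-orbit.

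Claim (i) is immediate from $\langle\alpha_r,\alpha_j\rangle\le 0$ for distinct simple roots $\alpha_r,\alpha_j$. Claim (ii) uses the hypothesis that $\alpha_0$ is a scalar multiple of $\omega_r$, giving $\langle\alpha_0,\alpha_j\rangle=0$ for $j\ne r$ and hence
\[
\langle\alpha_0-\alpha_r,\alpha_j\rangle=-\langle\alpha_r,\alpha_j\rangle\ge 0 \quad \text{for all } j\ne r.
\]
One reads off from Figures 1--4 that in every case the coefficient of $\alpha_r$ in $\alpha_0$ is $2$, so the $\alpha_r$-coefficient of $\alpha_0-\alpha_r$ is $1$, matching that of $\alpha_r$, consistent with $W_{S\setminus\{\alpha_r\}}$ preserving $\alpha_r$-coefficients of roots. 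For claim (iii), I plan to argue type by type using standard Euclidean realizations: in $B_n$ ($n\ge 3$), the Levi of type $A_1\sqcup B_{n-2}$ swaps $e_1\leftrightarrow e_2$ and negates $e_3,\ldots,e_n$, sending $\alpha_2=e_2-e_3$ to $e_1+e_3=\alpha_0-\alpha_2$; in $C_n$, the Levi of type $C_{n-1}$ acts as $-\mathrm{id}$ on $\mathrm{span}(e_2,\ldots,e_n)$, sending $\alpha_1=e_1-e_2$ to $e_1+e_2=\alpha_0-\alpha_1$; types $B_2$ and $G_2$ reduce to a direct $s_1$-calculation using $\langle\alpha_2,\alpha_1\rangle=-1$ and $-3$ respectively, yielding $\alpha_1+\alpha_2$ and $3\alpha_1+\alpha_2$ in turn.

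The main obstacle will be the $F_4$ case, where $S\setminus\{\alpha_1\}=\{\alpha_2,\alpha_3,\alpha_4\}$ is of type $C_3$ but is embedded in $\mathbb{R}^4$ non-standardly via $\alpha_4=\tfrac{1}{2}(e_1-e_2-e_3-e_4)$. The approach is to decompose $\mathbb{R}^4$ as the Levi root span $\mathrm{span}(e_1-e_2,e_3,e_4)$ plus its orthogonal line $\mathrm{span}(e_1+e_2)$, on which $w_{0,S\setminus\{\alpha_1\}}$ acts as $-\mathrm{id}$ and $+\mathrm{id}$ respectively. Writing $\alpha_1=e_2-e_3=-\tfrac{1}{2}(e_1-e_2)+\tfrac{1}{2}(e_1+e_2)-e_3$ and applying this action yields $\tfrac{1}{2}(e_1-e_2)+\tfrac{1}{2}(e_1+e_2)+e_3=e_1+e_3=\alpha_0-\alpha_1$, closing out (iii) and hence the lemma.
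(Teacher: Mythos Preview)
Your proposal is correct and yields a valid proof, but it follows a genuinely different route from the paper's own argument. The paper works coordinate-free and root-theoretically: it observes that $w_{0,S\setminus\{\alpha_r\}}(-\alpha_r)$ is $L_{S\setminus\{\alpha_r\}}$-negative dominant, then determines the value of $\langle w_{0,S\setminus\{\alpha_r\}}(-\alpha_r),\alpha_r\rangle$ (via Lemma~\ref{lem 2.3}, using that $\alpha_r$ is \emph{not} co-minuscule in types $B_n$ and $F_4$), and concludes that either $w_{0,S\setminus\{\alpha_r\}}(-\alpha_r)$ or $s_r w_{0,S\setminus\{\alpha_r\}}(-\alpha_r)$ is negative dominant for $G$, hence equals $-\alpha_0$ or $-\beta_0$ by uniqueness of the highest long or short root. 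Your argument instead realizes everything in explicit Euclidean coordinates, identifies $w_{0,S\setminus\{\alpha_r\}}$ as a concrete signed permutation (or as $-\mathrm{id}$ on the Levi root span), and applies it directly to $\alpha_r$. Your approach is shorter and more elementary, requiring no appeal to the co-minuscule characterization; the paper's approach is coordinate-free and makes transparent \emph{why} the answer is $\alpha_0-\alpha_r$, namely because $s_r$ carries it to the highest root. One minor remark: your framework (i)--(iii) is sound, but your execution of (iii) actually computes $w_{0,S\setminus\{\alpha_r\}}(\alpha_r)$ outright rather than merely exhibiting an orbit relation, so claims (i) and (ii) end up being motivational rather than logically necessary.
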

	\begin{proof}
		Note that $-\alpha_{r}$ is $L_{S\setminus\{\alpha_{r}\}}$-dominant. Therefore, $w_{0, S\setminus\{\alpha_{r}\}}(-\alpha_{r})$ is $L_{S\setminus\{\alpha_{r}\}}$ negative dominant.
		
		{\bf Case I:} $G$ is of type $B.$
		
		First note that in type $B_{2},$ $\alpha_{0}=\alpha_{1}+2\alpha_{2}=2\omega_{2}.$ Thus, $w_{0, S\setminus\{\alpha_{2}\}}(\alpha_{2})=\alpha_{0}-\alpha_{2}.$  In type $B_{3},$ we have $\alpha_{0}=\omega_{2}=\alpha_{1}+2\alpha_{2}+2\alpha_{3}.$ Therefore, $w_{0,S\setminus\{\alpha_{2}\}}(\alpha_{2})=s_{1}s_{3}(\alpha_{2})=\alpha_{1}+\alpha_{2}+2\alpha_{3}=\alpha_{0}-\alpha_{2}.$

		In type $B_{n}(n\ge 4),$ we have $\alpha_{0}=\omega_{2}.$
		Then we claim that $\langle w_{0, S\setminus\{\alpha_{2}\}}(-\alpha_{2}), \alpha_{2}\rangle\ge 1.$ Assume on the contrary that $\langle w_{0, S\setminus\{\alpha_{2}\}}(-\alpha_{2}), \alpha_{2}\rangle\le 0,$ then $w_{0, S\setminus\{\alpha_{2}\}}(-\alpha_{2})$ is negative dominant. Further, since $\alpha_{2}$ is a long root, $w_{0, S\setminus\{\alpha_{2}\}}(-\alpha_{2})=-\alpha_{0}.$ Therefore, by Lemma \ref{lem 2.3}, $\alpha_{2}$ is co-minuscule which is a contradiction. Since $\langle \alpha_{i},\alpha_{2}\rangle=0$ for $i\neq 1,2,3,$ and $w_{0,S\setminus\{\alpha_{2}\}}(-\alpha_{2})$ is $L_{S\setminus\{\alpha_{2}\}}$-negative dominant, we have $\langle s_{2}w_{0, S\setminus\{\alpha_{2}\}}(-\alpha_{2}), \alpha_{i}\rangle\le 0$ for $i\neq 1,3.$ Further, we have $\langle s_{2}w_{0, S\setminus\{\alpha_{2}\}}(-\alpha_{2}), \alpha_{i}\rangle=\langle w_{0, S\setminus\{\alpha_{2}\}}(-\alpha_{2}), \alpha_{2}+\alpha_{i}\rangle$ for $i=1,3.$ Since $\alpha_{2}$ is a long root, by the above discussion we have $\langle w_{0, S\setminus\{\alpha_{2}\}}(-\alpha_{2}), \alpha_{2}\rangle=1.$ Moreover, since  $w_{0,S\setminus \{\alpha_{2}\}}(\alpha_{i})=-\alpha_{i}$ for $i=1,3,$ and $\alpha_{1},\alpha_{2},\alpha_{3}$ are long root, we have 
		$\langle w_{0,S\setminus\{\alpha_{2}\}}(-\alpha_{2}), \alpha_{i}+\alpha_{2}\rangle=\langle w_{0,S\setminus\{\alpha_{2}\}}(-\alpha_{2}), \alpha_{i}\rangle +\langle w_{0,S\setminus\{\alpha_{2}\}}(-\alpha_{2}), \alpha_{2}\rangle=0$ for $i=1,3.$ Thus $s_{2}w_{0,S\setminus\{\alpha_{2}\}}(-\alpha_{2})$ is a negative dominant. Since $\alpha_{2}$ is a long root, we have $s_{2}w_{0,S\setminus\{\alpha_{2}\}}(-\alpha_{2})=-\alpha_{0}.$ So, we have $w_{0,S\setminus\{\alpha_{2}\}}(\alpha_{2})=\alpha_{0}-\alpha_{2}.$

		{\bf Case II:} $G$ is of type $C.$

		In type $C,$ we have $\alpha_{0}=2\omega_{1}.$ We claim that $\langle w_{0, S\setminus\{\alpha_{1}\}}(-\alpha_{1}), \alpha_{1}\rangle \leq 0.$ Assume
		on the contrary that $\langle w_{0, S\setminus\{\alpha_{1}\}}(-\alpha_{1}), \alpha_{1}\rangle\ge1.$
		Since $\langle \alpha_{i},\alpha_{1}\rangle=0$ for $i\neq 1,2$ and $w_{0,S\setminus\{\alpha_{1}\}}(-\alpha_{1})$ is $L_{S\setminus\{\alpha_{1}\}}$-negative dominant, we have $\langle s_{1}w_{0, S\setminus\{\alpha_{1}\}}(-\alpha_{1}), \alpha_{i}\rangle\le 0$ for $i\neq 2.$ Note that $\langle s_{1}w_{0, S\setminus\{\alpha_{1}\}}(-\alpha_{1}), \alpha_{2}\rangle$ $=\langle w_{0, S\setminus\{\alpha_{1}\}}(-\alpha_{1}), \alpha_{1}+\alpha_{2}\rangle.$ Since $\alpha_{1}, \alpha_{2},\alpha_{1}+\alpha_{2}$ are short roots, $\langle w_{0, S\setminus\{\alpha_{1}\}}(-\alpha_{1}),\alpha_{1}+ \alpha_{2}\rangle=\langle w_{0, S\setminus\{\alpha_{1}\}}(-\alpha_{1}), \alpha_{1}\rangle+\langle w_{0, S\setminus\{\alpha_{1}\}}(-\alpha_{1}), \alpha_{2}\rangle.$ Further, since $\alpha_{1}$ is a short root and $\langle w_{0, S\setminus\{\alpha_{1}\}}(-\alpha_{1}), \alpha_{1}\rangle\ge1,$ we have $\langle w_{0, S\setminus\{\alpha_{1}\}}(-\alpha_{1}), \alpha_{1}\rangle=1.$ Since  $w_{0,S\setminus \{\alpha_{1}\}}(\alpha_{2})=-\alpha_{2},$ $\langle w_{0,S\setminus\{\alpha_{1}\}}(-\alpha_{1}), \alpha_{1}+ \alpha_{2}\rangle=0.$ Thus $s_{1}w_{0,S\setminus\{\alpha_{1}\}}(-\alpha_{1})$ is a negative dominant. Since $\alpha_{1}$ is a short root, we have $s_{1}w_{0,S\setminus\{\alpha_{1}\}}(-\alpha_{1})=-\beta_{0},$ where $\beta_{0}$ is the highest short root. Note that $\beta_{0}=\alpha_{0}-\alpha_{1}.$ Thus we have $w_{0,S\setminus\{\alpha_{1}\}}(\alpha_{1})$ $=\alpha_{0}-\alpha_{1}.$ This is a contradiction to the hypothesis that $\langle w_{0, S\setminus\{\alpha_{1}\}}(-\alpha_{1}), \alpha_{1}\rangle \ge 1.$ Thus we have $\langle w_{0, S\setminus\{\alpha_{1}\}}(-\alpha_{1}), \alpha_{1}\rangle\le 0.$ Then $w_{0, S\setminus\{\alpha_{1}\}}(-\alpha_{1})$ is negative dominant. Further, since $\alpha_{1}$ is a short root, $w_{0, S\setminus\{\alpha_{1}\}}(\alpha_{1})=\beta_{0}=\alpha_{0}-\alpha_{1}.$ 
		
		{\bf Case III:} $G$ is of type $F_{4}.$

		In type $F_{4},$ we have $\alpha_{0}=2\alpha_{1}+3\alpha_{2}+4\alpha_{3}+2\alpha_{4}=\omega_{1}.$ We claim that $\langle w_{0, S\setminus\{\alpha_{1}\}}(-\alpha_{1}), \alpha_{1}\rangle\ge 1.$ Assume on the contrary that $\langle w_{0, S\setminus\{\alpha_{1}\}}(-\alpha_{1}), \alpha_{1}\rangle\le 0,$ then $w_{0, S\setminus\{\alpha_{1}\}}(-\alpha_{1})$ is negative dominant. Further, since $\alpha_{1}$ is a long root, $w_{0, S\setminus\{\alpha_{1}\}}(-\alpha_{1})=-\alpha_{0}.$ Therefore, by Lemma \ref{lem 2.3}, $\alpha_{1}$ is co-minuscule which is a contradiction. Since $\langle \alpha_{i},\alpha_{1}\rangle=0$ for $i\neq 1,2$ and $w_{0,S\setminus\{\alpha_{1}\}}(-\alpha_{1})$ is $L_{S\setminus\{\alpha_{1}\}}$-negative dominant, we have $\langle s_{1}w_{0, S\setminus\{\alpha_{1}\}}(-\alpha_{1}), \alpha_{i}\rangle\le 0$ for $i\neq 2.$ Further, $\langle s_{1}w_{0, S\setminus\{\alpha_{1}\}}(-\alpha_{1}), \alpha_{2}\rangle=\langle w_{0, S\setminus\{\alpha_{1}\}}(-\alpha_{1}), \alpha_{1}+\alpha_{2}\rangle.$ Since $\alpha_{1},\alpha_{2},\alpha_{1}+\alpha_{2}$ are long roots, we have $\langle s_{1}w_{0,S\setminus\{\alpha_{1}\}}(-\alpha_{1}), \alpha_{2}\rangle=\langle w_{0,S\setminus\{\alpha_{1}\}}(-\alpha_{1}), \alpha_{1}\rangle+\langle w_{0,S\setminus\{\alpha_{1}\}}(-\alpha_{1}), \alpha_{2}\rangle.$ Moreover, since $\alpha_{1}$ is a long root and  $\langle w_{0, S\setminus\{\alpha_{1}\}}(-\alpha_{1}), \alpha_{1}\rangle\ge 1,$ we have $\langle w_{0,S\setminus\{\alpha_{1}\}}(-\alpha_{1}), \alpha_{1}\rangle=1.$ Further, we have $\langle w_{0,S\setminus\{\alpha_{1}\}}(-\alpha_{1}), \alpha_{2}\rangle=\langle -\alpha_{1}, w_{0,S\setminus\{\alpha_{1}\}}(\alpha_{2})\rangle=\langle -\alpha_{1},-\alpha_{2}\rangle=-1.$ Therefore, we have $\langle s_{1}w_{0,S\setminus\{\alpha_{1}\}}(-\alpha_{1}), \alpha_{2}\rangle=0.$ Thus $s_{1}w_{0,S\setminus\{\alpha_{1}\}}(-\alpha_{1})$ is negative dominant. Therefore, since $\alpha_{1}$ is a long root, we have $s_{1}w_{0,S\setminus\{\alpha_{1}\}}(-\alpha_{1})=-\alpha_{0}.$ So, we have $w_{0,S\setminus\{\alpha_{1}\}}(\alpha_{1})=\alpha_{0}-\alpha_{1}.$
		
		{\bf Case IV:} $G$ is of type $G_{2}.$
		
		In type $G_{2},$ we have $\alpha_{0}=3\alpha_{1}+2\alpha_{2}=\omega_{2}.$ Then we have $w_{0, S\setminus \{\alpha_{2}\}}=s_{1}.$ Therefore, $w_{0,S\setminus \{\alpha_{2}\}}(\alpha_{2})=\alpha_{2}+3\alpha_{1}=\alpha_{0}-\alpha_{2}.$
		
	\end{proof}

	\section{ $G$ is of type $B_{n}$  $(n\ge 2)$}
	In this section, we assume that $G$ is of type $B_{n}.$ Further, we prove that for any $2\le i\le n,$ there exists a Schubert variety $X_{P_{n}}(w_{i})$ in $G/P_{n}$ such that $P_{i}=Aut^0(X_{P_{n}}(w_{i})).$ 
	
	Since $G$ is of type $B_{n} (n\ge 2),$ we have $\alpha_{0}=\alpha_{1}+2(\alpha_{2}+\alpha_{3}+\cdots +\alpha_{n-1}+\alpha_{n}).$ Recall that by Corollary \ref{cor 2.2}, there exists a unique element $v_{n-1}$ in $W$ of minimal length such that $v_{n-1}^{-1}(\alpha_{0})=-\alpha_{n-1}.$  
	\begin{lemma}\label{lem5.1}
		Then $v_{n-1}$ satisfies the following:
		\begin{itemize}
			\item [(i)] $v_{n-1}=(s_{2}s_{3}\cdots s_{n-1}s_{n})(s_{1}s_{2}s_{3}\cdots s_{n-1}).$
			
			\item[(ii)] If $n=2,$ then we have $v_{n-1}^{-1}(\alpha_{1})=\alpha_{1}+2\alpha_{2}$ and $v_{n-1}^{-1}(\alpha_{2})=-(\alpha_{1}+\alpha_{2}).$
			
			\item[(iii)] If $n\ge 3,$ then we have $v_{n-1}^{-1}(\alpha_{1})=\alpha_{n-1}+2\alpha_{n},$ $v_{n-1}^{-1}(\alpha_{2})=-(\alpha_{1}+\cdots +\alpha_{n-2}+2\alpha_{n-1}+2\alpha_{n}),$ $v_{n-1}^{-1}(\alpha_{j})=\alpha_{j-2}$ for $3\le j\le n-1,$ $v_{n-1}^{-1}(\alpha_{n})=\alpha_{n-2}+\alpha_{n-1}+\alpha_{n}.$
		\end{itemize}
	\end{lemma}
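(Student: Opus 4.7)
The approach is by explicit computation, combined with the uniqueness statement of Corollary~\ref{cor 2.2} (which applies since $\alpha_{n-1}$ is a long simple root in type $B_n$). Set $w := (s_2 s_3\cdots s_{n-1} s_n)(s_1 s_2\cdots s_{n-1})$, so that $w^{-1} = (s_{n-1}\cdots s_2 s_1)(s_n s_{n-1}\cdots s_3 s_2)$.

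I would first establish (ii) and (iii) by computing $w^{-1}(\alpha_j)$ for each simple root $\alpha_j$, applying the simple reflections in $w^{-1}$ one at a time from the right via $s_i(\beta) = \beta - \langle \beta, \alpha_i\rangle \alpha_i$. The only nontrivial point is the non-simply-laced pairing between $\alpha_{n-1}$ and $\alpha_n$, where $\langle \alpha_{n-1}, \alpha_n\rangle = -2$ and $\langle \alpha_n, \alpha_{n-1}\rangle = -1$; otherwise the pairings are those of an $A_{n-1}$-chain and propagate in a standard fashion. For $n=2$ the calculation is short and yields (ii) directly; for $n \ge 3$ the same procedure, tracked stepwise, produces the formulas of (iii).

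Once (ii) and (iii) are in hand, the identity $w^{-1}(\alpha_0) = -\alpha_{n-1}$ follows by linearity from $\alpha_0 = \alpha_1 + 2(\alpha_2 + \cdots + \alpha_n)$. To complete (i), it remains to identify $w$ with $v_{n-1}$, and by uniqueness in Corollary~\ref{cor 2.2} it suffices to show that $w$ has minimal length among elements satisfying $(\cdot)^{-1}(\alpha_0) = -\alpha_{n-1}$. I would verify this in two steps: first, the given word for $w$ is reduced, i.e.\ $\ell(w) = 2n - 2$, checked inductively at each partial product via the criterion that $\ell(u s_i) = \ell(u) + 1$ precisely when $u(\alpha_i) > 0$; second, the inversion set $\{\beta \in R^{+} : w^{-1}(\beta) < 0\}$ has exactly $2n-2$ elements, computable from (ii)/(iii) and linearity, which confirms that the minimum length for the defining property is $2n-2$ and hence that $w = v_{n-1}$.

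The main obstacle is the meticulous bookkeeping of $w^{-1}$ acting on simple roots in the presence of the non-simply-laced pairing: conceptually there is no subtlety, but the factors of $2$ appearing in (iii), in particular the coefficient $2$ of $\alpha_n$ in $w^{-1}(\alpha_1)$ and in $w^{-1}(\alpha_2)$, are sensitive to the correct use of $\langle \alpha_{n-1}, \alpha_n\rangle = -2$ versus $\langle \alpha_n, \alpha_{n-1}\rangle = -1$, and must be propagated carefully through successive reflections.
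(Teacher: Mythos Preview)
Your approach is essentially the paper's: define $w$ by the displayed word, check by direct calculation that $w^{-1}(\alpha_0)=-\alpha_{n-1}$, match lengths, and invoke the uniqueness in Corollary~\ref{cor 2.2}; parts (ii) and (iii) are then straight computation. The paper does (i) first and computes $w^{-1}(\alpha_0)$ directly rather than via linearity on simple roots, but that is only a cosmetic reordering.

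There is, however, a small logical slip in your justification of minimality. Computing the inversion set of $w$ and finding $2n-2$ elements merely reconfirms $\ell(w)=2n-2$; it does \emph{not} show that $2n-2$ is the minimum length among elements sending $\alpha_0$ to $-\alpha_{n-1}$. For that you need the length of $v_{n-1}$ from the preliminaries: by Proposition~\ref{Prop 2.1}(2) and the remark after Corollary~\ref{cor 2.2}, $\ell(v_{n-1})=\ell(s_{n-1}u_{n-1})=g-1=2n-2$ (using the dual Coxeter number $g=2n-1$ for $B_n$). Once you know this, the equality $\ell(w)=2n-2=\ell(v_{n-1})$ together with $w^{-1}(\alpha_0)=-\alpha_{n-1}$ and uniqueness gives $w=v_{n-1}$. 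This is exactly how the paper argues (tersely, writing only ``since $\ell(v_{n-1}')=\ell(v_{n-1})$''); your second step should cite Proposition~\ref{Prop 2.1} rather than recount inversions.
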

	\begin{proof}
		Proof of (i): Let ${v}_{n-1}'=(s_{2}s_{3}\cdots s_{n-1}s_{n})(s_{1}s_{2}s_{3}\cdots s_{n-1}).$ Then by the usual calculation we have ${{v}_{n-1}'}^{-1}(\alpha_{0})=-\alpha_{n-1}.$ Moreover, since $\ell({v}_{n-1}')=\ell(v_{n-1}),$ by Corollary \ref{cor 2.2}, we have $v_{n-1}={v}_{n-1}'.$
		
		Proofs of (ii) and (iii) follow from the usual calculation.
	\end{proof}
	
	Let $x_{i}=w_{0,S\setminus\{\alpha_{2},\alpha_{i}\}}w_{0,S\setminus\{\alpha_{2}\}}=((w_{0,S\setminus\{\alpha_{2}\}})^{S\setminus\{\alpha_{2}, \alpha_{i}\}})^{-1}$ and $w_{i}=x_{i}v_{n-1}$ for all $2\le i\le n.$ We observe that $R^{+}(v_{n-1}^{-1})\subseteq\{\beta\in R^{+}:  \alpha_{2}\le \beta  \}.$ On the other hand, $R^{+}(x_{i})\subseteq \mathbb{Z}_{\ge 0}(S\setminus\{\alpha_{2}\})\cap R^{+}$ for all $2\le i\le n.$ Therefore, $R^{+}(v_{n-1}^{-1})\cap R^{+}(x_{i})=\emptyset.$ Hence, $\ell(w_{i})=\ell(x_{i})+\ell(v_{n-1})$ for all $2\le i\le n.$ 
	\begin{lemma}\label{lem5.2}
		We have
		\begin{itemize}
			\item[(i)] $v_{n-1}(\alpha_{n})=\alpha_{1}+\cdots +\alpha_{n}.$
			
			\item[(ii)] $w_{0, S\setminus\{\alpha_{2}\}}v_{n-1}(\alpha_{n})=\alpha_{2}+\cdots +\alpha_{n}.$
			
			\item[(iii)] $w_{i}(\alpha_{n})$ is a non-simple positive root for all $2\le i\le n.$
		\end{itemize}
	\end{lemma}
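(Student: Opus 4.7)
The plan is to verify the three parts by direct root-system computations using the explicit expression for $v_{n-1}$ from Lemma \ref{lem5.1}(i), together with the fact that the Levi of $P_{S\setminus\{\alpha_{2}\}}$ decomposes as a commuting product of the $A_{1}$-factor $\langle s_{1}\rangle$ and the $B_{n-2}$-factor $\langle s_{3},\ldots,s_{n}\rangle$ (commuting because $\alpha_{1}$ is disconnected from $\alpha_{3}$ in the Dynkin diagram of $B_{n}$).

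For (i), I would apply the reflections in $v_{n-1}=(s_{2}s_{3}\cdots s_{n-1}s_{n})(s_{1}s_{2}\cdots s_{n-1})$ to $\alpha_{n}$ from right to left. The right-hand block, processed in the order $s_{n-1}, s_{n-2}, \ldots, s_{1}$, successively produces $\alpha_{n-1}+\alpha_{n}$, then $\alpha_{n-2}+\alpha_{n-1}+\alpha_{n}$, and eventually $\alpha_{1}+\alpha_{2}+\cdots+\alpha_{n}$. For the left-hand block, the key observation is that $\langle \alpha_{1}+\alpha_{2}+\cdots+\alpha_{n},\alpha_{j}\rangle=0$ for every $j\in\{2,3,\ldots,n\}$, where at $j=n$ one uses the long-short pairing $\langle\alpha_{n-1},\alpha_{n}\rangle=-2$ to cancel $\langle\alpha_{n},\alpha_{n}\rangle=2$. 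Therefore each reflection in the left block fixes the vector, giving $v_{n-1}(\alpha_{n})=\alpha_{1}+\cdots+\alpha_{n}$.

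For (ii), I would write $w_{0,S\setminus\{\alpha_{2}\}}=s_{1}\cdot w_{0,\{\alpha_{3},\ldots,\alpha_{n}\}}$ and split $\alpha_{1}+\alpha_{2}+\cdots+\alpha_{n}=\alpha_{1}+\alpha_{2}+(\alpha_{3}+\cdots+\alpha_{n})$. Then $w_{0,S\setminus\{\alpha_{2}\}}(\alpha_{1})=-\alpha_{1}$; next $w_{0,S\setminus\{\alpha_{2}\}}(\alpha_{3}+\cdots+\alpha_{n})=-(\alpha_{3}+\cdots+\alpha_{n})$, since this vector is the highest short root of the $B_{n-2}$-subsystem and $-w_{0,\{\alpha_{3},\ldots,\alpha_{n}\}}$ acts as the identity on its Dynkin diagram; and finally $w_{0,S\setminus\{\alpha_{2}\}}(\alpha_{2})=\alpha_{0}-\alpha_{2}$ by Lemma \ref{Lemma 2.1}. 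Substituting $\alpha_{0}=\alpha_{1}+2(\alpha_{2}+\cdots+\alpha_{n})$ and summing yields $\alpha_{2}+\cdots+\alpha_{n}$.

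For (iii), using that $s_{1}$ commutes with $W_{\{\alpha_{3},\ldots,\alpha_{n}\}}$, the factorisations $w_{0,S\setminus\{\alpha_{2}\}}=s_{1}w_{0,\{\alpha_{3},\ldots,\alpha_{n}\}}$ and (for $i\ge 3$) $w_{0,S\setminus\{\alpha_{2},\alpha_{i}\}}=s_{1}w_{0,\{\alpha_{3},\ldots,\alpha_{n}\}\setminus\{\alpha_{i}\}}$ combine via $s_{1}^{2}=e$ to give $x_{i}\in W_{\{\alpha_{3},\ldots,\alpha_{n}\}}$; the case $i=2$ is trivial since $x_{2}=e$. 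By (i), $w_{i}(\alpha_{n})=x_{i}(\alpha_{1}+\cdots+\alpha_{n})$, and the computation $\langle \alpha_{1}+\cdots+\alpha_{n},\alpha_{j}\rangle=0$ for $j\in\{3,\ldots,n\}$ (already made in step (i)) shows that $W_{\{\alpha_{3},\ldots,\alpha_{n}\}}$ fixes $\alpha_{1}+\cdots+\alpha_{n}$. Hence $w_{i}(\alpha_{n})=\alpha_{1}+\cdots+\alpha_{n}$, visibly a non-simple positive root for $n\ge 2$. The only bookkeeping subtlety is the degenerate case $n=2$, where the $B_{n-2}$-factor is empty, $v_{1}=s_{2}s_{1}$, $w_{0,S\setminus\{\alpha_{2}\}}=s_{1}$, and $x_{2}=e$, so all three statements collapse to one-line checks.
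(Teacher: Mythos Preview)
Your proof is correct. Parts (i) and (ii) are essentially the paper's argument, carried out with more detail: both compute $v_{n-1}(\alpha_{n})$ directly and both obtain (ii) from (i) by using $w_{0,S\setminus\{\alpha_{2}\}}(\alpha_{j})=-\alpha_{j}$ for $j\neq 2$ together with $w_{0,S\setminus\{\alpha_{2}\}}(\alpha_{2})=\alpha_{0}-\alpha_{2}$ from Lemma~\ref{Lemma 2.1}.

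Part (iii) is where you take a genuinely different route. The paper argues via (ii): since $w_{0,S\setminus\{\alpha_{2}\}}v_{n-1}(\alpha_{n})=\alpha_{2}+\cdots+\alpha_{n}$ has both $\alpha_{2}$ and $\alpha_{i}$ in its support, and since $w_{0,S\setminus\{\alpha_{2},\alpha_{i}\}}\in W_{S\setminus\{\alpha_{2},\alpha_{i}\}}$ cannot alter those two coefficients, the image $w_{i}(\alpha_{n})$ is a positive root with at least two simple roots in its support. You instead exploit the commutativity $W_{S\setminus\{\alpha_{2}\}}=\langle s_{1}\rangle\times W_{\{\alpha_{3},\ldots,\alpha_{n}\}}$ to cancel the $s_{1}$ factors and conclude $x_{i}\in W_{\{\alpha_{3},\ldots,\alpha_{n}\}}$; then the pairing computation already made in (i), namely $\langle \alpha_{1}+\cdots+\alpha_{n},\alpha_{j}\rangle=0$ for $j\ge 3$, shows this subgroup fixes $v_{n-1}(\alpha_{n})$, yielding the explicit equality $w_{i}(\alpha_{n})=\alpha_{1}+\cdots+\alpha_{n}$ for every $2\le i\le n$. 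Your argument thus bypasses (ii) entirely for the purpose of (iii) and produces a sharper conclusion than the paper's support argument. The paper's approach, on the other hand, is more robust in that it does not rely on the special disconnectedness of $\alpha_{1}$ from $\{\alpha_{3},\ldots,\alpha_{n}\}$ and would adapt more readily to the parallel arguments in the other types.
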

	\begin{proof}
		Proof of (i): follows from the usual calculation.
		
		Proof of (ii): By (i), we have $v_{n-1}(\alpha_{n})=\alpha_{1}+\cdots +\alpha_{n}.$ Note that $w_{0, S\setminus\{\alpha_{2}\}}(\alpha_{j})=-\alpha_{j}$ for all $ j\neq 2.$ On the other hand, by Lemma \ref{Lemma 2.1}, we have $w_{0,S\setminus\{\alpha_{2}\}}(\alpha_{2})=\alpha_{0}-\alpha_{2}.$Therefore, we have $w_{0,S\setminus\{\alpha_{2}\}}v_{n-1}(\alpha_{n})=\alpha_{2}+\cdots +\alpha_{n}.$
		
		Proof of (iii): For $i=2,$ the statement follows from (i). 
		
		By (ii), we have $w_{0, S\setminus\{\alpha_{2}\}}v_{n-1}(\alpha_{n})=\alpha_{2}+\cdots +\alpha_{n}.$ Since supp($w_{0, S\setminus\{\alpha_{2}\}}v_{n-1}(\alpha_{n})$) is  $\{\alpha_{j}: 2\le j\le n\},$ $\alpha_{2},\alpha_{i}\le  w_{i}(\alpha_{n})$=$w_{0,S\setminus\{\alpha_{2},\alpha_{i}\}}(w_{0, S\setminus\{\alpha_{2}\}}v_{n-1}(\alpha_{n})).$ Therefore, $w_{0,S\setminus\{\alpha_{2},\alpha_{i}\}}(w_{0, S\setminus\{\alpha_{2}\}}v_{n-1}(\alpha_{n}))$ is a non-simple positive root for all $3\le i\le n.$ 
	\end{proof}
	
	\begin{lemma}\label{lem5.3}
		We have
		\begin{itemize}
			
			\item[(i)] $w_{i}^{-1}(\alpha_{i})$ is a negative root for all $2\le i\le n.$ 
			
			\item[(ii)] For $2\le i\le n$ and $1\le j\le n$ such that $j\neq i,$ $w_{i}^{-1}(\alpha_{j})$ is a positive root. 
			
		\end{itemize}
	\end{lemma}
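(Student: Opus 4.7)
The plan is to reduce both statements to showing that $\alpha_i$ is the unique simple left descent of $w_i$, by exploiting the factorization $w_i = x_i v_{n-1}$ with additive lengths (noted just before Lemma~\ref{lem5.2}), the explicit formulas for $v_{n-1}^{-1}$ in Lemma~\ref{lem5.1}, and a refinement of the structure of $x_i$. The degenerate case $i = 2$ is immediate: since $x_2 = w_{0,S\setminus\{\alpha_2\}}^{\,2} = e$, we have $w_2 = v_{n-1}$, and Lemma~\ref{lem5.1}(ii)--(iii) yield that $v_{n-1}^{-1}(\alpha_2)$ is negative while $v_{n-1}^{-1}(\alpha_j)$ is positive for every other $j$.

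For $i \ge 3$, I would first sharpen the containment $x_i \in W_{S\setminus\{\alpha_2\}}$ to $x_i \in W_{\{\alpha_3,\dots,\alpha_n\}}$. Indeed, in type $B_n$ the simple reflection $s_1$ commutes with every $s_l$ for $l \ge 3$, so the decompositions $w_{0,S\setminus\{\alpha_2\}} = s_1\, w_{0,\{\alpha_3,\dots,\alpha_n\}}$ and $w_{0,S\setminus\{\alpha_2,\alpha_i\}} = s_1\, w_{0,\{\alpha_3,\dots,\alpha_n\}\setminus\{\alpha_i\}}$ produce cancellation. Then $x_i^{-1}$ is a minimal-length coset representative in $W_{\{\alpha_3,\dots,\alpha_n\}}/W_{\{\alpha_3,\dots,\alpha_n\}\setminus\{\alpha_i\}}$, so $x_i^{-1}(\alpha_1) = \alpha_1$, $x_i^{-1}(\alpha_j) > 0$ for $j \in \{3,\dots,n\}\setminus\{i\}$, and (since $x_i \ne e$ and $\alpha_i$ is the only possible simple left descent in $\{\alpha_3,\dots,\alpha_n\}$) $x_i^{-1}(\alpha_i) < 0$.

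The next step is the key auxiliary fact: any positive root $\beta$ whose support lies in $S\setminus\{\alpha_2\}$ satisfies $v_{n-1}^{-1}(\beta) > 0$. By Lemma~\ref{lem5.1}(iii), $v_{n-1}^{-1}(\alpha_l)$ is a positive root for each $l \ne 2$, so $v_{n-1}^{-1}(\beta)$ is a non-negative integer combination of positive roots; being a Weyl translate of a root it is again a root, hence positive. Applying this to $x_i^{-1}(\alpha_j)$ for $j \in \{1\} \cup (\{3,\dots,n\}\setminus\{i\})$ yields case (ii) in these sub-cases, and applying it to $-x_i^{-1}(\alpha_i)$ (a positive root supported in $\{\alpha_3,\dots,\alpha_n\}$) yields case (i).

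The only remaining case is (ii) with $j = 2$ and $i \ge 3$, which is the main obstacle. Here $x_i^{-1}(\alpha_2) = \alpha_2 + \gamma_i$ for some non-negative integer combination $\gamma_i$ of $\alpha_3,\dots,\alpha_n$, so $w_i^{-1}(\alpha_2) = v_{n-1}^{-1}(\alpha_2) + v_{n-1}^{-1}(\gamma_i)$, where the first term is the explicit negative root $-(\alpha_1 + \cdots + \alpha_{n-2} + 2\alpha_{n-1} + 2\alpha_n)$ from Lemma~\ref{lem5.1}(iii). One must then show that the positive contribution $v_{n-1}^{-1}(\gamma_i)$ dominates coefficient-wise. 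My approach would be to pin down $\gamma_i$ by viewing $-\alpha_2$ as behaving like the first fundamental weight of the $B_{n-2}$-subsystem on $\{\alpha_3,\dots,\alpha_n\}$, so that $x_i^{-1}(\alpha_2)-\alpha_2$ is governed by the image of this weight under the specific coset representative, and then perform a case check (splitting on whether $i = n$, $i = n-1$, or $3 \le i \le n-2$) to verify positivity; small instances such as $n=3,\, i=3$ (giving $w_3^{-1}(\alpha_2) = \alpha_1$) and $n=4,\, i \in \{3,4\}$ confirm the pattern.
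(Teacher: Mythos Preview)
Your framework for (i) and the sub-cases $j\neq 2,i$ of (ii) is exactly the paper's: both of you use that $x_i^{-1}$ is a minimal coset representative (so $x_i^{-1}(\alpha_i)<0$ and $x_i^{-1}(\alpha_j)>0$ with support avoiding $\alpha_2$ for $j\neq 2,i$), and then invoke $R^+(v_{n-1}^{-1})\subseteq\{\beta\in R^+:\alpha_2\le\beta\}$ from Lemma~\ref{lem5.1} together with $\ell(w_i)=\ell(x_i)+\ell(v_{n-1})$. Your extra observation that $x_i\in W_{\{\alpha_3,\dots,\alpha_n\}}$ (via the $s_1$ cancellation) is a pleasant refinement but not essential.

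Where you diverge from the paper, and where your argument is incomplete, is the case $j=2$ with $i\ge 3$. You set up $w_i^{-1}(\alpha_2)=v_{n-1}^{-1}(\alpha_2)+v_{n-1}^{-1}(\gamma_i)$ and then propose an unspecified case check, confirming only a few small $n$. That is not yet a proof. The paper avoids any case analysis by computing $x_i^{-1}(\alpha_2)$ in closed form. First, applying Lemma~\ref{lem 2.3} to the $A_{i-1}$-subsystem $\{\alpha_1,\dots,\alpha_{i-1}\}$ (where $\alpha_2$ is co-minuscule) gives $w_{0,S\setminus\{\alpha_2,\alpha_i\}}(\alpha_2)=\alpha_1+\cdots+\alpha_{i-1}$. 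Then, since $w_{0,S\setminus\{\alpha_2\}}(\alpha_k)=-\alpha_k$ for $k\ne 2$ and $w_{0,S\setminus\{\alpha_2\}}(\alpha_2)=\alpha_0-\alpha_2$ by Lemma~\ref{Lemma 2.1}, one obtains
\[
x_i^{-1}(\alpha_2)=w_{0,S\setminus\{\alpha_2\}}\bigl(\alpha_1+\cdots+\alpha_{i-1}\bigr)=\alpha_0-(\alpha_1+\cdots+\alpha_{i-1}).
\]
Finally, Lemma~\ref{lem5.1}(iii) applied term by term yields $w_i^{-1}(\alpha_2)=v_{n-1}^{-1}\bigl(x_i^{-1}(\alpha_2)\bigr)=\alpha_{i-2}+\cdots+\alpha_{n-2}$, a positive root for every $3\le i\le n$. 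Your approach can be completed along the same lines once you identify $\gamma_i$ explicitly (in fact $\gamma_i=\alpha_3+\cdots+\alpha_{i-1}+2\alpha_i+\cdots+2\alpha_n$, consistent with the above), but the paper's route via Lemmas~\ref{lem 2.3} and~\ref{Lemma 2.1} is the cleanest way to close this gap uniformly in $i$.
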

	\begin{proof}
		Proof of (i): Note that for every $2\le i \le n,$ $x_{i}^{-1}(\alpha_{i})=(w_{0,S\setminus\{\alpha_{2}\}})^{S\setminus\{\alpha_{2},\alpha_{i}\}}(\alpha_{i})$ is negative. Further,  for any $2\le i\le n,$ we have $w_{i}=x_{i}v_{n-1}$ and $\ell(w_{i})=\ell(x_{i})+\ell(v_{n-1}).$ Hence, $w_{i}^{-1}(\alpha_{i})$ is a negative root for $2\le i\le n.$
		
		Proof of (ii): For $i=2$  we have $w_{i}=v_{n-1}.$ Therefore, by Lemma \ref{lem5.1}(ii),(iii), $w_{i}^{-1}(\alpha_{j})$ is a positive root for $j\neq 2.$ 
		
		For any $3\le i\le n,$ and $j\neq 2,i$, we observe that $x_{i}^{-1}(\alpha_{j})$ is a positive root whose support does not contain $\alpha_{2}.$ Further, by Lemma \ref{lem5.1}(ii),(iii), $R^{+}(v_{n-1}^{-1})\subseteq\{\beta\in R^{+}:  \alpha_{2}\le \beta  \}.$ Therefore,  $w_{i}^{-1}(\alpha_{j})=(x_{i}v_{n-1})^{-1}(\alpha_{j})$ is a positive root for $j\neq 2,i.$
		On the other hand, by using Lemma \ref{lem 2.3} we have $w_{0,S\setminus\{\alpha_{2},\alpha_{i}\}}(\alpha_{2})=\alpha_{1}+\cdots +\alpha_{i-1}.$ Note that   $w_{0,S\setminus\{\alpha_{2}\}}(\alpha_{k})=-\alpha_{k}$ for $ k\neq 2.$ Further, by Lemma \ref{Lemma 2.1}, we have $w_{0,S\setminus\{\alpha_{2}\}}(\alpha_{2})=\alpha_{0}-\alpha_{2}.$ Hence, we have $w_{0,S\setminus\{\alpha_{2}\}}(\alpha_{1}+\cdots+ \alpha_{i-1})=\alpha_{0}-(\alpha_{1}+\cdots +\alpha_{i-1}).$ Thus $x_{i}^{-1}(\alpha_{2})=\alpha_{0}-(\alpha_{1}+\cdots +\alpha_{i-1}).$ Further, by using Lemma \ref{lem5.1}(iii) $v_{n-1}^{-1}(x_{i}^{-1}(\alpha_{2}))=\alpha_{i-2}+\cdots +\alpha_{n-2}.$ Therefore, $w_{i}^{-1}(\alpha_{j})$ is a positive root for all $3\le i\le n$ and $1\le j\le n$ such that $j\neq i.$  
	\end{proof}   
	
	\begin{lemma}\label{cor 5.5} Let $v\in W.$
		$H^1(s_{i}, H^0(v, \mathfrak{b}))=0$ for any $1\le i\le n-1.$
	\end{lemma}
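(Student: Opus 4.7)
The plan is to induct on $\ell(v)$. At each stage we decompose $H^0(v, \mathfrak{b})$ into indecomposable $\hat{B}_{\alpha_i}$-summands $V' \otimes \mathbb{C}_\lambda$ via Lemma \ref{lem 3.2}, and by Lemma \ref{lem 3.1} the desired vanishing amounts to showing every such summand satisfies $\langle \lambda, \alpha_i \rangle \ge -1$.

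For the base case $v = e$, we have $H^0(e, \mathfrak{b}) = \mathfrak{b}$. Decomposing $\mathfrak{b}$ as a $\hat{B}_{\alpha_i}$-module, the summands are: (i) the trivial piece $\ker\alpha_i \subseteq \mathfrak{h}$; (ii) the two-dimensional piece $\mathbb{C} h(\alpha_i) \oplus \mathbb{C}_{-\alpha_i}$, for which one computes $\langle \lambda, \alpha_i \rangle = -1$; and (iii) one summand per $\mathfrak{sl}_2(\alpha_i)$-string through the remaining negative root spaces $\mathbb{C}_{-\beta}$ ($\beta > 0$, $\beta \ne \alpha_i$). Since $\alpha_i$ is a long simple root of $B_n$ (as $i \le n-1$), a direct check on $R^+(B_n)$ shows that every positive root $\beta \ne \alpha_i$ satisfies $\langle \beta, \alpha_i \rangle \in \{-1, 0, 1\}$, so each string summand contributes $\langle \lambda, \alpha_i \rangle = 0$. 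Lemma \ref{lem 3.1} then yields $H^1(s_i, \mathfrak{b}) = 0$.

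For the inductive step with $\ell(v) \ge 1$, choose a simple root $\alpha$ with $s_\alpha v < v$, write $v = s_\alpha v'$, and use the Demazure identity $H^0(v, \mathfrak{b}) = H^0(s_\alpha, N)$ with $N := H^0(v', \mathfrak{b})$. If $\alpha = \alpha_i$, then $M := H^0(v, \mathfrak{b})$ carries an $L_{\alpha_i}$-action, so every $\hat{B}_{\alpha_i}$-indecomposable summand is an irreducible $\hat{L}_{\alpha_i}$-module with $\lambda = 0$, and Lemma \ref{lem 3.1}(1) completes the argument. If $\alpha \ne \alpha_i$, we first decompose $N$ as a $\hat{B}_\alpha$-module and apply Lemma \ref{lem 3.1}(1) to express $M$ as a direct sum of $\hat{L}_\alpha$-modules; each such $\hat{L}_\alpha$-summand is then re-decomposed as a $\hat{B}_{\alpha_i}$-module by grouping its weights into $\mathfrak{sl}_2(\alpha_i)$-strings. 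Using the inductive hypothesis on $N$ (giving $\langle \lambda, \alpha_i \rangle \ge -1$ for every $\hat{B}_{\alpha_i}$-summand of $N$) together with Lemma \ref{lem 5.4} (which ensures that whenever $-\alpha_i$ occurs as a weight of $N$ it is rigidly packaged inside a copy of $\mathbb{C} h(\alpha_i) \oplus \mathbb{C}_{-\alpha_i}$), one checks that every $\hat{B}_{\alpha_i}$-summand of $M$ still satisfies $\langle \lambda, \alpha_i \rangle \ge -1$.

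The main obstacle is the sub-case $\alpha \ne \alpha_i$ of the inductive step: applying $H^0(s_\alpha, -)$ can create new weights $\mu - j\alpha$ of $M$ (for $0 \le j \le \langle \mu, \alpha \rangle$) arising from weights $\mu$ of $N$, and one must verify that none of these new weights assembles into a $\hat{B}_{\alpha_i}$-summand with $\langle \lambda, \alpha_i \rangle \le -2$. The essential structural inputs are that $\alpha_i$ is a long simple root in $B_n$ (so the Cartan integers $\langle \alpha, \alpha_i \rangle$ for $\alpha \ne \alpha_i$ lie in $\{-2, -1, 0\}$, which bounds how far a single application of $H^0(s_\alpha, -)$ can push $\alpha_i$-weights down) and Lemma \ref{lem 5.4}, which controls the $-\alpha_i$ weight space tightly enough to prevent any isolated one-dimensional $\hat{B}_{\alpha_i}$-summand at $\alpha_i$-value $-2$ from arising.
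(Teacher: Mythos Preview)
Your inductive scheme is substantially more complicated than necessary, and the sub-case $\alpha\ne\alpha_i$ that you yourself flag as ``the main obstacle'' is not actually carried out: ``one checks'' hides a genuine difficulty, because the inductive hypothesis controls the $\hat B_{\alpha_i}$-decomposition of $N$, whereas computing $M=H^0(s_\alpha,N)$ requires the $\hat B_\alpha$-decomposition, and there is no direct mechanism to transport the bound $\langle\lambda,\alpha_i\rangle\ge -1$ across that change of structure. As stated, the induction is not closed.

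The paper's proof is a one-paragraph direct argument with no induction on $\ell(v)$ at all. The missing observation is that $H^0(v,\mathfrak b)$ embeds $B$-equivariantly into $H^0(v,\mathfrak g)=\mathfrak g$ (left-exactness of $H^0$ together with triviality of the bundle for the $G$-module $\mathfrak g$), so every weight $\mu$ of $H^0(v,\mathfrak b)$ is either $0$ or a root, and in fact lies in $\{0\}\cup(-R^+)$. Since $\alpha_i$ is long in $B_n$ for $i\le n-1$, every root $\mu\ne -\alpha_i$ therefore satisfies $\langle\mu,\alpha_i\rangle\in\{-1,0,1\}$; this already forces $\langle\lambda,\alpha_i\rangle\ge -1$ for every indecomposable $\hat B_{\alpha_i}$-summand not containing the weight $-\alpha_i$. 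If the weight $-\alpha_i$ does occur (necessarily with multiplicity one, since $\dim\mathfrak g_{-\alpha_i}=1$), Lemma~\ref{lem 5.4} packages it into the summand $\mathbb C h(\alpha_i)\oplus\mathbb C_{-\alpha_i}\cong V\otimes\mathbb C_{-\omega_i}$, which has $\langle -\omega_i,\alpha_i\rangle=-1$. Lemma~\ref{lem 3.1} then gives $H^1=0$ on every summand. You already isolated the two ingredients (longness of $\alpha_i$ and Lemma~\ref{lem 5.4}); what you were missing is the global constraint on weights coming from the inclusion into $\mathfrak g$, which eliminates the need for induction entirely.
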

	\begin{proof} Let $\mu \neq -\alpha_{i}$ be such that $H^0(v,\mathfrak{b})_{\mu}\neq 0.$ Then
		$\langle \mu,\alpha_{i} \rangle =1,0,-1$ for $i\neq n,$ as $\alpha_{i}$ is a long root. On the other hand, if $H^0(v,\mathfrak{b})_{-\alpha_{i}}\neq 0$ then by Lemma \ref{lem 5.4}, $\mathbb{C}h(\alpha_{i})\oplus \mathbb{C}_{-\alpha_{i}}$ is a $B_{\alpha_{i}}$ submodule of $H^0(v,\mathfrak{b}).$ Therefore, by Lemma \ref{lem 3.1}(3), we have $H^1(s_{i},H^0(v,\mathfrak{b}))=0$ for $i\neq n.$
	\end{proof}
	\begin{lemma}\label{lem5.6}
		Let $v_{n-1}$ be as above. Then we have $H^1(v_{n-1},\mathfrak{b})=0.$ 
	\end{lemma}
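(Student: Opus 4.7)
The approach is to compute $H^1(v_{n-1}, \mathfrak{b})$ by iterative left-peeling along the reduced expression from Lemma \ref{lem5.1}(i), namely $v_{n-1} = s_2 s_3 \cdots s_{n-1}\, s_n\, s_1 s_2 \cdots s_{n-1}$, using at each step the short exact sequence
$$0 \to H^1(s_j, H^0(v, \mathfrak{b})) \to H^1(s_j v, \mathfrak{b}) \to H^0(s_j, H^1(v, \mathfrak{b})) \to 0 \qquad (\ell(s_j v) = \ell(v) + 1),$$
which arises from the Leray spectral sequence of the $\mathbb{P}^1$-fibration attached to the leftmost simple reflection together with the vanishing $H^i(\mathbb{P}^1, -) = 0$ for $i \ge 2$.

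First, for $2 \le k \le n$, set $y_k := s_k s_{k+1} \cdots s_{n-1} s_n s_1 s_2 \cdots s_{n-1}$, so that $y_2 = v_{n-1}$, $y_n = s_n s_1 s_2 \cdots s_{n-1}$, and $y_k = s_k y_{k+1}$ with $\ell(y_k) = \ell(y_{k+1}) + 1$. For each $2 \le k \le n-1$, the peeling SES combined with Lemma \ref{cor 5.5} (applicable since $k \le n-1$) forces the first term to vanish, giving an inclusion $H^1(y_k, \mathfrak{b}) \hookrightarrow H^0(s_k, H^1(y_{k+1}, \mathfrak{b}))$. Running this downward from $k = n-1$ to $k = 2$, the problem reduces to proving $H^1(y_n, \mathfrak{b}) = 0$.

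Write $y_n = s_n z$ with $z = s_1 s_2 \cdots s_{n-1}$; the peeling SES then reduces $H^1(y_n, \mathfrak{b}) = 0$ to the twin vanishings (i) $H^1(z, \mathfrak{b}) = 0$ and (ii) $H^1(s_n, H^0(z, \mathfrak{b})) = 0$. Part (i) is handled as before: iterating left-peeling on $z$, whose reflections all have index $\le n-1$, Lemma \ref{cor 5.5} reduces it to $H^1(s_{n-1}, \mathfrak{b}) = 0$, which follows from the $\hat{B}_{\alpha_{n-1}}$-decomposition of $\mathfrak{b}$ (Lemma \ref{lem 3.2}) together with Lemmas \ref{lem 3.1} and \ref{lem 5.4}: because $\alpha_{n-1}$ is long, the only weight $\mu$ of $\mathfrak{b}$ with $\langle \mu, \alpha_{n-1} \rangle \le -2$ is $\mu = -\alpha_{n-1}$, and Lemma \ref{lem 5.4} absorbs it into the indecomposable $B_{\alpha_{n-1}}$-summand $\mathbb{C} h(\alpha_{n-1}) \oplus \mathbb{C}_{-\alpha_{n-1}}$ whose first cohomology under $s_{n-1}$ vanishes.

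The hard step is part (ii). Here Lemma \ref{cor 5.5} is unavailable, because $\alpha_n$ is a short root and $\langle \mu, \alpha_n \rangle = -2$ occurs for several weights $\mu$ of $\mathfrak{b}$ besides $-\alpha_n$ (for instance $\mu = -(\alpha_{n-1} + 2\alpha_n)$). The plan is to compute $H^0(z, \mathfrak{b})$ by iterated Demazure induction along $z = s_1 s_2 \cdots s_{n-1}$, decompose the result into indecomposable $\hat{B}_{\alpha_n}$-summands $V' \otimes \mathbb{C}_\lambda$ via Lemma \ref{lem 3.2}, and verify summand-by-summand that $\langle \lambda, \alpha_n \rangle \ge -1$, whereupon Lemma \ref{lem 3.1} delivers the desired vanishing. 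The main obstacle is establishing a Lemma \ref{lem 5.4}-style pairing at the short-root index $n$ for this specific module: every weight $\mu$ of $H^0(z, \mathfrak{b})$ with $\langle \mu, \alpha_n \rangle \le -2$ must be shown to come bundled with higher $\alpha_n$-weight vectors so that the ambient $\hat{B}_{\alpha_n}$-summand carries $\lambda$ with $\langle \lambda, \alpha_n \rangle \ge -1$. Carrying out this weight bookkeeping for the short root is the technical crux of the argument.
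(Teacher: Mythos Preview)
Your overall strategy matches the paper's: peel reflections from the left via the SES, using Lemma~\ref{cor 5.5} at every long simple root, and isolate the one short-root step at $s_n$ for direct analysis. However, you stop precisely at the point that matters: you describe ``the technical crux'' of checking that every weight $\mu$ of $H^0(z,\mathfrak{b})$ with $\langle\mu,\alpha_n\rangle\le -2$ sits in a $\hat{B}_{\alpha_n}$-summand with $\langle\lambda,\alpha_n\rangle\ge -1$, but you do not carry it out. Since this is the only nontrivial step in the whole argument, the proposal as written is an outline, not a proof.

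The paper avoids your difficulty by switching to a different reduced expression of $v_{n-1}$, namely one of the form $u\cdot s_n s_{n-1}$ with every reflection in $u$ having index $\le n-1$. This places the short-root reflection $s_n$ second from the right, so the only module against which the $\alpha_n$-analysis must be done is $H^0(s_{n-1},\mathfrak{b})$ rather than your $H^0(s_1s_2\cdots s_{n-1},\mathfrak{b})$. For $H^0(s_{n-1},\mathfrak{b})$ the dangerous weights are exactly $-\alpha_n$ and $-(\alpha_i+\cdots+\alpha_{n-1}+2\alpha_n)$ for $1\le i\le n-1$, and in each case the paper exhibits the companion weight $\mu+\alpha_n$ inside $H^0(s_{n-1},\mathfrak{b})$ to form a two-dimensional summand $V\otimes\mathbb{C}_{-\omega_n}$, whence $H^1(s_n,H^0(s_{n-1},\mathfrak{b}))=0$ by Lemma~\ref{lem 3.1}. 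After that, the remaining reflections in $u$ are all long, so Lemma~\ref{cor 5.5} finishes the induction. Your approach with $z=s_1\cdots s_{n-1}$ would also work, but it forces you to track weights through $n-1$ Demazure steps before the short-root check; the paper's reordering reduces this to a single step.
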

	\begin{proof}	
		Recall that $v_{n-1}=(s_{2}s_{1}s_{3}\cdots s_{n-1}s_{n})(s_{2}s_{3}\cdots s_{n-1}).$ To prove the lemma we first show that $H^1(s_{n}s_{n-1},\mathfrak{b})=0.$ By Lemma \ref{cor 5.5} we have $H^1(s_{n-1},\mathfrak{b})=0.$ Let $\mu $ be such that $H^0(s_{n-1},\mathfrak{b})_{\mu}\neq 0$ and $\langle\mu ,\alpha_{n} \rangle =-2,$ then $\mu =-(\alpha_{i}+\cdots +\alpha_{n-1}+2\alpha_{n})$ for some $1\le i\le n-1$ or $\mu =-\alpha_{n}.$ 
		
		If  $\mu=-\alpha_{n},$ then by Lemma \ref{lem 5.4}, the two dimensional $B_{\alpha_{n}}$ module $\mathbb{C}h(\alpha_{n})\oplus \mathbb{C}_{-\alpha_{n}}=V\otimes \mathbb{C}_{-\omega_{n}}$ is a direct summand of $H^0(s_{n-1},\mathfrak{b}),$ where $V$ is the standard two dimensional $\hat{L}_{\alpha_{n}}$-module.
		
		If $\mu=-(\alpha_{i}+\cdots +\alpha_{n-1}+2\alpha_{n})$ for some $1\le i\le n-2,$ then $\mathbb{C}_{\mu+\alpha_{n}}$ is an one dimensional $B_{\alpha_{n-1}}$-summand of $\mathfrak{b}$ which is in fact an $\hat{L}_{\alpha_{n-1}}$-module, therefore, by Lemma \ref{lem 3.1}(1) $H^0(s_{n-1},\mathfrak{b})_{\mu+\alpha_{n}}\neq 0.$ Hence, $\mathbb{C}_{\mu}\oplus\mathbb{C}_{\mu+\alpha_{n}}=V\otimes\mathbb{C}_{-\omega_{n}}$ is a two dimensional $B_{\alpha_{n}}$-summand of $H^0(s_{n-1},\mathfrak{b})$ where $V$ is the standard two dimensional $\hat{L}_{\alpha_{n}}$-module.
		
		Otherwise, we have $\mu=-(\alpha_{n-1}+2\alpha_{n}).$  Since $\mathbb{C}_{-\alpha_{n}}\oplus \mathbb{C}_{\mu+\alpha_{n}}$ is the standard two dimensional $B_{\alpha_{n-1}}$-summand of $\mathfrak{b}$ which is in fact an $\hat{L}_{\alpha_{n-1}}$-module, by Lemma \ref{lem 3.1}(1) $H^0(s_{n-1},\mathfrak{b})_{\mu+\alpha_{n}}\neq 0.$ Hence, $\mathbb{C}_{\mu}\oplus\mathbb{C}_{\mu+\alpha_{n}}=V\otimes\mathbb{C}_{-\omega_{n}}$ is a two dimensional $B_{\alpha_{n}}$-summand of $H^0(s_{n-1},\mathfrak{b})$ where $V$ is the standard two dimensional $\hat{L}_{\alpha_{n}}$-module. Therefore, combining the above discussion together with Lemma \ref{lem 3.1}(3), we have $H^1(s_{n}s_{n-1},\mathfrak{b})=0.$
		
		Since $H^1(v_{n-1},\mathfrak{b})$ is independent of the choice of the reduced expression of $v_{n-1},$ we use the reduced expression $(s_{2}s_{1}s_{3}\cdots s_{n-1})(s_{2}s_{3}\cdots s_{n-2}s_{n}s_{n-1})$ of $v_{n-1}$ to compute $H^1(v_{n-1},\mathfrak{b}).$
		
		Let $u=(s_{2}s_{1}s_{3}\cdots s_{n-1})(s_{2}s_{3}\cdots s_{n-2}).$ Since $H^1(s_{n}s_{n-1},\mathfrak{b})=0,$ therefore by using SES, Lemma \ref{cor 5.5} repeatedly we have $H^1(v_{n-1},\mathfrak{b})=0,$
	\end{proof}
	
	We note that by \cite[Lemma 6.2, p.779]{Kan}, $H^j(v_{n-1}, \mathfrak{b})=0$  for $j\ge 2.$ Now, we show that $H^j(v_{n-1}, \mathfrak{b})=0$ for $j=0,1.$
	\begin{corollary}\label{cor5.7}
		Let $v_{n-1},w_{i}$ $(2\le i\le n)$ be as above. Then we have   
		\begin{enumerate}
			\item [(i)] $H^j(v_{n-1},\mathfrak{b})=0$ for $j=0,1.$
			
			\item [(ii)] $H^j(v_{n-1},\alpha_{n})=0$ for $j=0,1.$
			
			\item [(iii)] $H^j(v_{n-1},\mathfrak{p}_{n})=0$ for $j=0,1.$
			
			\item [(iv)] $H^j(w_{i},\mathfrak{p}_{n})=0$ for $j=0,1$ and $2\le i\le n.$
		\end{enumerate}
	\end{corollary}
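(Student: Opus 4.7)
The plan is to prove the four claims in sequence. For part (i), the $j=1$ case is exactly Lemma \ref{lem5.6}, while $H^j(v_{n-1}, \mathfrak{b}) = 0$ for $j \geq 2$ is quoted from \cite[Lemma 6.2, p.779]{Kan}; so the only genuinely new content in (i) is $H^0(v_{n-1}, \mathfrak{b}) = 0$. Part (ii) has a one-line proof: the reduced expression of $v_{n-1}$ from Lemma \ref{lem5.1}(i) ends in $s_{n-1}$, and $\langle \alpha_n, \alpha_{n-1}\rangle = -1$ in type $B_n$ (since $\alpha_{n-1}$ is long and $\alpha_n$ is short), so Lemma \ref{lem 2.1}(4) with $\tau = v_{n-1} s_{n-1}$, $\alpha = \alpha_{n-1}$, and $\lambda = \alpha_n$ yields $H^j(v_{n-1}, \alpha_n) = 0$ for every $j \geq 0$, which is stronger than what (ii) claims.

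For the remaining $j=0$ case of (i), the plan is to combine the already-established $H^1(v_{n-1}, \mathfrak{b}) = 0$ with the $B$-module short exact sequence $0 \to \mathfrak{u}^- \to \mathfrak{b} \to \mathfrak{h} \to 0$, where $\mathfrak{h} = \mathfrak{b}/\mathfrak{u}^-$ carries the trivial $B$-action. Since Schubert varieties satisfy $H^0(X(v), \mathcal{O}) = \mathbb{C}$ with all higher cohomology zero, we have $H^0(v_{n-1}, \mathfrak{h}) = \mathfrak{h}$ and $H^{\geq 1}(v_{n-1}, \mathfrak{h}) = 0$. The long exact cohomology sequence thus collapses to
\begin{equation*}
0 \to H^0(v_{n-1}, \mathfrak{u}^-) \to H^0(v_{n-1}, \mathfrak{b}) \to \mathfrak{h} \to H^1(v_{n-1}, \mathfrak{u}^-) \to 0,
\end{equation*}
so $H^0(v_{n-1}, \mathfrak{b}) = 0$ is equivalent to $H^0(v_{n-1}, \mathfrak{u}^-) = 0$ together with injectivity of the coboundary map $\mathfrak{h} \to H^1(v_{n-1}, \mathfrak{u}^-)$. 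I would verify both by the same indecomposable $\hat{B}_{\alpha}$-summand analysis used in Lemma \ref{lem5.6}, now tracking the $H^0$-contributions through the reduced expression of $v_{n-1}$ and invoking Lemma \ref{lem 2.1}(4) at each step where some weight $\mu$ satisfies $\langle \mu, \alpha\rangle = -1$.

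For (iii), the $B$-module sequence $0 \to \mathfrak{b} \to \mathfrak{p}_n \to \mathbb{C}_{\alpha_n} \to 0$ (arising from $\mathfrak{p}_n = \mathfrak{b} \oplus \mathfrak{g}_{\alpha_n}$) gives a long exact sequence whose outer terms vanish in degrees $0$ and $1$ by (i) and (ii), forcing $H^j(v_{n-1}, \mathfrak{p}_n) = 0$ for $j = 0, 1$. For (iv), the factorization $w_i = x_i v_{n-1}$ with $\ell(w_i) = \ell(x_i) + \ell(v_{n-1})$ gives a Grothendieck-type spectral sequence $E_2^{p,q} = H^p(x_i, H^q(v_{n-1}, \mathfrak{p}_n)) \Rightarrow H^{p+q}(w_i, \mathfrak{p}_n)$. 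All $E_2$-terms with $q = 0$ or $q = 1$ vanish by (iii), and no terms with $q \geq 2$ contribute in total degree at most $1$, so $H^j(w_i, \mathfrak{p}_n) = 0$ for $j = 0, 1$ and all $2 \leq i \leq n$. The principal obstacle throughout is the $j = 0$ case of (i): the trivial quotient $\mathfrak{h}$ of $\mathfrak{b}$ contributes nontrivially, and one must show that $\mathfrak{h}$ is entirely absorbed into the coboundary $H^1(v_{n-1}, \mathfrak{u}^-)$, which carries the geometric content specific to $v_{n-1}$.
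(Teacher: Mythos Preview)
Your treatment of (ii), (iii), and (iv) matches the paper's proof essentially verbatim: the paper also uses $\langle\alpha_n,\alpha_{n-1}\rangle=-1$ for (ii), the short exact sequence $0\to\mathfrak{b}\to\mathfrak{p}_n\to\mathbb{C}_{\alpha_n}\to 0$ for (iii), and the length-additive factorization $w_i=x_iv_{n-1}$ together with iterated SES (your spectral sequence is the same thing) for (iv).

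The divergence is in the $j=0$ case of (i), which you flag as the ``principal obstacle'' and propose to attack via the sequence $0\to\mathfrak{u}^-\to\mathfrak{b}\to\mathfrak{h}\to 0$ plus a summand-by-summand tracking of $H^0$ and the coboundary $\mathfrak{h}\to H^1(v_{n-1},\mathfrak{u}^-)$. That route is plausible but you have not actually carried it out, and it is considerably harder than what the paper does. The paper dispatches $H^0(v_{n-1},\mathfrak{b})=0$ in one line: since $v_{n-1}^{-1}(\alpha_0)=-\alpha_{n-1}<0$, the proof of \cite[Theorem~4.1, p.~771]{Kan} already gives $H^0(v_{n-1},\mathfrak{b})=0$. (The mechanism behind that citation is that $\mathfrak{g}_{-\alpha_0}$ is the $B$-socle of $\mathfrak{b}$; once the lowest weight $-\alpha_0$ is killed along the reduced word, the whole $H^0$ collapses.) So the ``obstacle'' you identify is not one: the very property defining $v_{n-1}$, namely $v_{n-1}^{-1}(\alpha_0)<0$, is exactly the hypothesis needed to invoke the result from \cite{Kan}. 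Your approach would recover this by hand, but at the cost of a computation you have only sketched; the paper's citation is both shorter and complete.
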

	\begin{proof}
		Proof of (i): By Lemma \ref{lem5.6}, $H^{1}(v_{n-1},\mathfrak{b})=0.$ On the other hand, since $v_{n-1}^{-1}(\alpha_{0})=-\alpha_{n-1},$ by the proof of \cite[Theorem 4.1, p.771]{Kan} we have $H^0(v_{n-1},\mathfrak{b})=0.$ 
		
		Proof of (ii): Since $\langle \alpha_{n},\alpha_{n-1}\rangle=-1,$ by Lemma \ref{lem 3.1}(3) , $H^j(v_{n-1},\alpha_{n})=0$ for $j=0,1.$
		
		Proof of (iii): Consider the exact sequence
		\begin{center}
			$0\longrightarrow \mathfrak{b}\longrightarrow\mathfrak{p}_{n}\longrightarrow\mathbb{C}_{\alpha_{n}}\longrightarrow0$
		\end{center}
		of $B$-modules.
		
		Then we have the following long exact sequence 
		\begin{center}
			$0\longrightarrow H^0(v_{n-1},\mathfrak{b})\longrightarrow H^0(v_{n-1},\mathfrak{p}_{n})\longrightarrow H^0(v_{n-1},\alpha_{n})\longrightarrow$ 	$H^1(v_{n-1}, \mathfrak{b})\longrightarrow H^1(v_{n-1},\mathfrak{p}_{n})\longrightarrow H^1(v_{n-1},\alpha_{n})\longrightarrow H^2(v_{n-1},\mathfrak{b})\longrightarrow\cdots$
		\end{center}
		of $B$-modules.
		
		Therefore, by using $(i)$ and $(ii),$ proof of $(iii)$ follows.

		Proof of (iv): 
		Since $w_{i}=x_{i}v_{n}$ is such that $\ell(w_{i})=\ell(x_{i})+\ell(v_{n}),$ by using (iii) together with SES it follows that $H^j(w_{i}, \mathfrak{p}_{n})=0$ for all $j=0,1$ and $2\le i\le n.$
	\end{proof}
	\begin{proposition}\label{prop5.8}
		We have $w_{i}\in W^{P_{n}}$ and $P_{i}=Aut^0(X_{P_{n}}(w_{i}))$ for all $2\le i\le n.$
	\end{proposition}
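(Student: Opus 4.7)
The plan is to establish $w_i \in W^{P_n}$ combinatorially from the lemmas of this section, and then to prove $P_i = \mathrm{Aut}^0(X_{P_n}(w_i))$ by computing $H^0(X_{P_n}(w_i), T_{X_{P_n}(w_i)})$ via the cohomology vanishing already recorded in Corollary~\ref{cor5.7}. First, $w_i \in W^{P_n}$ is immediate from Lemma~\ref{lem5.2}(iii), which gives $w_i(\alpha_n) > 0$. To identify the $G$-stabilizer $P_{w_i}$ of $X_{P_n}(w_i)$, I combine Lemma~\ref{lem5.3}(i), which yields $s_i w_i < w_i$ and hence $P_i \subseteq P_{w_i}$, with Lemma~\ref{lem5.3}(ii), which yields $s_j w_i > w_i$ for every $j \ne i$ and therefore excludes every other minimal parabolic. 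This gives $P_{w_i} = P_i$, and since $G$ is of adjoint type the resulting action of $P_i$ on $X_{P_n}(w_i)$ embeds $P_i$ as a closed connected subgroup of $\mathrm{Aut}^0(X_{P_n}(w_i))$.

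For the reverse inclusion on Lie algebras, I would use the identification $\mathrm{Lie}(\mathrm{Aut}^0(X_{P_n}(w_i))) = H^0(X_{P_n}(w_i), T_{X_{P_n}(w_i)})$ and compute the ambient cohomology $H^0(X_{P_n}(w_i), T_{G/P_n}|_{X_{P_n}(w_i)})$. Since $w_i \in W^{P_n}$, the projection $X(w_i) \to X_{P_n}(w_i)$ is birational, and by rational singularities of Schubert varieties together with the projection formula one has $H^j(X_{P_n}(w_i), \mathcal{L}(V)) = H^j(w_i, V)$ for every $P_n$-module $V$. Applying this to the short exact sequence of $P_n$-modules
\[
0 \longrightarrow \mathfrak{p}_n \longrightarrow \mathfrak{g} \longrightarrow \mathfrak{g}/\mathfrak{p}_n \longrightarrow 0,
\]
and noting that $\mathcal{L}(\mathfrak{g})$ is the trivial bundle (so $H^0 = \mathfrak{g}$), the long exact sequence combined with Corollary~\ref{cor5.7}(iv), which gives $H^0(w_i, \mathfrak{p}_n) = H^1(w_i, \mathfrak{p}_n) = 0$, yields $H^0(X_{P_n}(w_i), T_{G/P_n}|_{X_{P_n}(w_i)}) \cong \mathfrak{g}$.

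Consequently, the natural inclusion $H^0(X_{P_n}(w_i), T_{X_{P_n}(w_i)}) \hookrightarrow \mathfrak{g}$ identifies its image with the subspace of $\mathfrak{g}$ whose elements generate one-parameter subgroups of $G$ preserving $X_{P_n}(w_i)$, namely $\mathrm{Lie}(P_{w_i}) = \mathrm{Lie}(P_i)$. Since $P_i \subseteq \mathrm{Aut}^0(X_{P_n}(w_i))$ are connected algebraic groups with equal Lie algebras, they must coincide. The principal obstacle is the vanishing $H^0(w_i, \mathfrak{p}_n) = H^1(w_i, \mathfrak{p}_n) = 0$, which has already been achieved in Corollary~\ref{cor5.7}(iv) via the delicate indecomposable-$\hat{L}_{\alpha_n}$- and $\hat{L}_{\alpha_{n-1}}$-summand analysis of Lemma~\ref{lem5.6}; the remaining ingredients (rational singularities, triviality of $\mathcal{L}(\mathfrak{g})$, and the identification $\mathrm{Lie}(\mathrm{Aut}^0(X)) = H^0(X, T_X)$) are standard.
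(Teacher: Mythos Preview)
Your argument follows the same architecture as the paper's: Lemma~\ref{lem5.2}(iii) for $w_i\in W^{P_n}$, Lemma~\ref{lem5.3} for $P_{w_i}=P_i$, and Corollary~\ref{cor5.7}(iv) plus the short exact sequence $0\to\mathfrak{p}_n\to\mathfrak{g}\to\mathfrak{g}/\mathfrak{p}_n\to 0$ to obtain $H^0(w_i,\mathfrak{g}/\mathfrak{p}_n)=\mathfrak{g}$, from which $\mathrm{Aut}^0(X_{P_n}(w_i))$ is forced to be a closed subgroup of $G$ stabilizing $X_{P_n}(w_i)$, hence equal to $P_i$. One step, however, is not justified.

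You write that ``since $G$ is of adjoint type the resulting action of $P_i$ on $X_{P_n}(w_i)$ embeds $P_i$ as a closed connected subgroup of $\mathrm{Aut}^0(X_{P_n}(w_i))$.'' Adjointness guarantees only that $G$ acts faithfully on $G/P_n$; it does \emph{not} imply that a parabolic stabilizer acts faithfully on the Schubert subvariety it preserves. For example, in $G=PGL_3$ the minimal parabolic $P_1$ stabilizes the Schubert curve $X(s_1)\cong\mathbb{P}^1$ in $G/B$, but $R_u(P_1)$ (being normal in $P_1$ and contained in $B$) acts trivially on $P_1/B=X(s_1)$, so $P_1\to\mathrm{Aut}^0(X(s_1))$ has nontrivial kernel. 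The paper supplies the missing ingredient: since $x_i\in W_{S\setminus\{\alpha_2\}}$ and $\langle\alpha_0,\alpha_j\rangle=0$ for $j\neq 2$, one has $x_i^{-1}(\alpha_0)=\alpha_0$ and hence $w_i^{-1}(\alpha_0)=v_{n-1}^{-1}(\alpha_0)=-\alpha_{n-1}<0$; then \cite[Theorem~6.6]{Kan} gives the injectivity of $P_i\to\mathrm{Aut}^0(X_{P_n}(w_i))$. With this correction your proof is complete and coincides with the paper's.
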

	\begin{proof}
		Recall that $w_{i}=w_{0,S\setminus \{\alpha_{i}, \alpha_{2}\}} w_{0, S\setminus \{\alpha_{2}\}} v_{n-1}$ for $2\le i\le n.$  By Lemma \ref{lem5.2}(iii), we conclude  that $w_{i}(\alpha_{n})$ is a non-simple positive root.
		On the other hand, by Lemma \ref{lem5.3}, we have the following:
		
		\begin{enumerate}
			\item [(i)] $w_{i}^{-1}(\alpha_{i})$ is a negative root.
			
			\item [(ii)] $w_{i}^{-1}(\alpha_{j})$ is a positive root for $j\neq i.$
		\end{enumerate}
		
		Thus $P_{i}$ is the stabilizer of $X_{P_{n}}(w_{i})$ in $G.$ Since $v_{n-1}^{-1}(\alpha_{0})$ is a negative root, $w_{i}^{-1}(\alpha_{0})=v_{n-1}^{-1}(\alpha_{0})$ (as$~w_{0,S\setminus\{\alpha_{2}\}}w_{0,S\setminus\{\alpha_{2}, \alpha_{i}\}}(\alpha_{0})=\alpha_{0}$) is a negative root. Therefore, by using \cite[Theorem 6.6, p.781]{Kan} the natural homomorphism $\varphi:P_{i}\longrightarrow Aut^0(X_{P_{n}}(w_{i}))$ is an injective homomorphism of algebraic groups.
		
		Now consider the following SES 
		\begin{center}
			$0\longrightarrow \mathfrak{p}_{n}\longrightarrow \mathfrak{g}\longrightarrow \mathfrak{g/p}_{n}\longrightarrow 0$	
		\end{center}
		of $B$-modules.
		
		Thus, we have the following long exact sequence 
		\begin{center}
			$0\longrightarrow H^0(w_{i},\mathfrak{p}_{n})\longrightarrow H^0(w_{i},\mathfrak{g})\longrightarrow H^0(w_{i}, \mathfrak{g/p}_{n})\longrightarrow$ 
			
			$H^1(w_{i},\mathfrak{p}_{n})\longrightarrow H^1(w_{i},\mathfrak{g})\longrightarrow H^1(w_{i}, \mathfrak{g/p}_{n})\longrightarrow\cdots $	
		\end{center}
		of $B$-modules.  
		
		Since $H^0(w_{i},\mathfrak{g})=\mathfrak{g},$ by  using Corollary \ref{cor5.7}(iv),  we have $H^0(w_{i},\mathfrak{g/p}_{n})=\mathfrak{g}.$ Further, since $d\varphi(\mathfrak{p}_{i})\subseteq$Lie$(Aut^0(X_{P_{n}}(w_{i})))\subseteq H^0(w_{i},\mathfrak{g/p}_{n})=\mathfrak{g}$ and the base field is $\mathbb{C},$ $Aut^0(X_{P_{n}}(w_{i}))$ is a
		closed subgroup of $G$ containing $P_{i}$ (see \cite[Theorem 12.5, p.85 and Theorem 13.1, p.87]{Hum2}). Hence, $Aut^0(X_{P_{n}}(w_{i}))$ is the stabilizer of $X_{P_{n}}(w_{i})$ in $G.$ Thus, we have $Aut^0(X_{P_{n}}(w_{i}))=P_{i}.$ 
	\end{proof}

	\section{ $G$ is of type $C_{n}$ $(n\ge 3)$}
	In this section, we assume that $G$ is of type $C_{n}.$ Further, we prove that for any $1\le i\le n-1,$ there exists a Schubert variety $X_{P_{n-1}}(w_{i})$ in $G/P_{n-1}$ such that $P_{i}=Aut^0(X_{P_{n-1}}(w_{i})).$ 
	
	Since $G$ is of type $C_{n},$  we have $\alpha_{0}=2\alpha_{1}+\cdots +2\alpha_{n-1}+\alpha_{n}=2\omega_{1}.$ Recall that by Corollary \ref{cor 2.2}, there exists a unique element $v_{n}$ in $W$ of minimal length such that $v_{n}^{-1}(\alpha_{0})=-\alpha_{n}$
	
	\begin{lemma}\label{Lemma 4.1}
		Then $v_{n}$ satisfies the following:
		\item[(i)] $v_{n}=s_{1}s_{2}\cdots s_{n}.$
		
		\item[(ii)] $v_{n}^{-1}(\alpha_{1})=-(\alpha_{1}+\alpha_{2}+\cdots +\alpha_{n-1}+\alpha_{n}),$ $v_{n}^{-1}(\alpha_{j})=\alpha_{j-1}$ for all $2\le j\le n-1,$ $v_{n}^{-1}(\alpha_{n})=\alpha_{n}+2\alpha_{n-1}.$
	\end{lemma}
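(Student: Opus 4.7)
The plan follows the template of Lemma~\ref{lem5.1}. I set $v_n' := s_1 s_2 \cdots s_n$, which is manifestly reduced of length $n$, verify directly that $(v_n')^{-1}(\alpha_0) = -\alpha_n$, and then invoke the uniqueness clause of Corollary~\ref{cor 2.2} to conclude $v_n = v_n'$. Once part (i) is in hand, part (ii) reduces to applying the word $s_n s_{n-1} \cdots s_1$ to each simple root in turn.

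For (i), I apply $s_n s_{n-1} \cdots s_1$ to $\alpha_0 = 2\alpha_1 + 2\alpha_2 + \cdots + 2\alpha_{n-1} + \alpha_n$ step by step. Since $\langle \alpha_0, \alpha_1 \rangle = 2$, the first reflection strips $2\alpha_1$, and an easy induction shows $s_k \cdots s_1(\alpha_0) = 2\alpha_{k+1} + \cdots + 2\alpha_{n-1} + \alpha_n$ for $1 \le k \le n-2$. The decisive step is the application of $s_{n-1}$ to $2\alpha_{n-1} + \alpha_n$: the long-short Cartan integer $\langle \alpha_n, \alpha_{n-1} \rangle = -2$ gives $s_{n-1}(\alpha_n) = \alpha_n + 2\alpha_{n-1}$, which cancels the $-2\alpha_{n-1}$ contribution to produce $\alpha_n$. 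Applying $s_n$ then yields $-\alpha_n$. For the length, $\ell(v_n') = n$; and $\ell(v_n) \ge n$ follows from the fact that a simple reflection changes the height of a root by at most $2$ in type $C_n$, while $\alpha_0$ has height $2n-1$ and $-\alpha_n$ has height $-1$, forcing at least $n$ steps. By the minimality of $v_n$ in Corollary~\ref{cor 2.2}, we obtain $v_n = v_n'$.

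For (ii), I apply $v_n^{-1} = s_n s_{n-1} \cdots s_1$ to each simple root separately. For $\alpha_j$ with $2 \le j \le n-1$, reflections with index $< j-1$ fix $\alpha_j$, the pair $s_{j-1}, s_j$ collapse it to $\alpha_{j-1}$, and reflections with index $> j$ fix $\alpha_{j-1}$, giving $v_n^{-1}(\alpha_j) = \alpha_{j-1}$. For $\alpha_1$, the reflection $s_1$ negates it, each subsequent $s_k$ (with $2 \le k \le n-1$) appends $-\alpha_k$ to the running sum, and the final $s_n$ contributes $-\alpha_n$ via $\langle \alpha_{n-1}, \alpha_n \rangle = -1$, yielding $-(\alpha_1 + \cdots + \alpha_n)$. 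For $\alpha_n$, the reflections $s_1, \ldots, s_{n-2}$ fix it, $s_{n-1}$ sends it to $\alpha_n + 2\alpha_{n-1}$ by the same long-root asymmetry as in (i), and $s_n$ fixes $\alpha_n + 2\alpha_{n-1}$ since $s_n(\alpha_{n-1}) = \alpha_{n-1} + \alpha_n$.

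The argument is entirely mechanical and no conceptual obstacle arises. The care required is in correctly invoking the asymmetric Cartan integers at the double bond, $\langle \alpha_n, \alpha_{n-1}\rangle = -2$ and $\langle \alpha_{n-1}, \alpha_n\rangle = -1$, which govern both the critical height-two drop in (i) and the non-trivial output $v_n^{-1}(\alpha_n) = \alpha_n + 2\alpha_{n-1}$ in (ii). Otherwise the proof is parallel to the $B_n$ case treated in Lemma~\ref{lem5.1}.
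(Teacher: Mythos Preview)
Your proof is correct and follows essentially the same approach as the paper, which simply writes ``by the usual calculation'' for both parts and invokes Corollary~\ref{cor 2.2} once $\ell(v_n')=\ell(v_n)$ is noted. The only minor deviation is that you establish $\ell(v_n)=n$ via a height-change argument, whereas the paper tacitly relies on the formula $\ell(v_\alpha)=g-1=n$ coming from Proposition~\ref{Prop 2.1}(2) and the remark $v_\alpha=s_\alpha u_\alpha$; both routes are valid and lead immediately to the uniqueness clause of Corollary~\ref{cor 2.2}.
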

	\begin{proof}
		Proof of (i): Let ${v}_{n}'=s_{1}s_{2}\cdots s_{n}.$ Then by the usual calculation we have ${{v}_{n}'}^{-1}(\alpha_{0})=-\alpha_{n}.$ Moreover, since $\ell({v}_{n}')=\ell(v_{n}),$ by Corollary \ref{cor 2.2}, we have $v_{n}={v}_{n}'.$
		
		Proof of (ii): Follows from the usual calculation. 
	\end{proof}
	Let $x_{i}=w_{0,S\setminus\{\alpha_{1},\alpha_{i}\}}w_{0,S\setminus\{\alpha_{1}\}}=((w_{0,S\setminus\{\alpha_{1}\}})^{S\setminus\{\alpha_{1}, \alpha_{i}\}})^{-1}$ and $w_{i}=x_{i}v_{n}$ for all $1\le i\le n-1.$ Note that $R^{+}(v_{n}^{-1})\subseteq\{\beta\in R^{+}:  \alpha_{1}\le \beta  \}.$ On the other hand, $R^{+}(x_{i})\subseteq \mathbb{Z}_{\ge 0}(S\setminus\{\alpha_{1}\})\cap R^{+}$ for all $1\le i\le n-1.$ Therefore, $R^{+}(v_{n}^{-1})\cap R^{+}(x_{i})=\emptyset.$ Hence, we have $\ell(w_{i})=\ell(x_{i})+\ell(v_{n})$ for all $1\le i\le n-1.$ 
	\begin{lemma}\label{Lemma 4.2}
		Then we have
		\begin{itemize}
			
			\item[(i)] $v_{n}(\alpha_{n-1})=\alpha_{1}+\cdots +\alpha_{n}.$
			
			\item[(ii)] $w_{0, S\setminus \{\alpha_{1}\}}v_{n}(\alpha_{n-1})=\alpha_{1}+\cdots +\alpha_{n-1}.$
			
			\item[(iii)] $w_{i}(\alpha_{n-1})$ is a non-simple positive root for all $1\le i\le n-1.$
		\end{itemize}
	\end{lemma}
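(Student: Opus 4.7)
The plan is to prove the three parts in order, closely paralleling the analogous Lemma \ref{lem5.2} from the type $B$ case. The calculation is entirely routine once one chooses a convenient model for the root system; the only thing that requires a tiny bit of care is (iii), where one needs to see that applying a parabolic longest element preserves positivity and non-simplicity.

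For (i), I would work in the standard realization $\alpha_i = e_i - e_{i+1}$ for $i<n$ and $\alpha_n = 2e_n$, so the simple reflection $s_i$ swaps $e_i$ and $e_{i+1}$ for $i<n$ and $s_n$ sends $e_n \mapsto -e_n$. Computing the composite $v_n = s_1 s_2 \cdots s_n$ from right to left gives $v_n(e_j) = e_{j+1}$ for $1 \le j \le n-1$ and $v_n(e_n) = -e_1$. Therefore $v_n(\alpha_{n-1}) = v_n(e_{n-1}) - v_n(e_n) = e_n + e_1$, and since $e_1 + e_n = \alpha_1 + \alpha_2 + \cdots + \alpha_{n-1} + \alpha_n$, part (i) follows. (Alternatively, one can apply the simple reflections one at a time to $\alpha_{n-1}$ and keep track of the result, exactly as in Lemma \ref{lem5.2}.)

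For (ii), apply $w_{0,S \setminus \{\alpha_1\}}$ to the expression from (i). Since $G$ is non-simply-laced, the Dynkin diagram automorphism induced by $-w_0$ (and hence by the longest element of any Levi subsystem of the same type) is the identity, so $w_{0,S\setminus\{\alpha_1\}}(\alpha_j) = -\alpha_j$ for $j \ge 2$. For the remaining summand, Lemma \ref{Lemma 2.1} gives $w_{0,S\setminus\{\alpha_1\}}(\alpha_1) = \alpha_0 - \alpha_1 = \alpha_1 + 2\alpha_2 + \cdots + 2\alpha_{n-1} + \alpha_n$. Adding these contributions together collapses to $\alpha_1 + \alpha_2 + \cdots + \alpha_{n-1}$, proving (ii). (As a cross-check, in the $e_i$-model the element $w_{0,S\setminus\{\alpha_1\}}$ fixes $e_1$ and sends $e_j \mapsto -e_j$ for $j \ge 2$, so $w_{0,S\setminus\{\alpha_1\}}(e_1 + e_n) = e_1 - e_n = \alpha_1 + \cdots + \alpha_{n-1}$.)

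For (iii), split into cases on $i$. When $i = 1$, one has $x_1 = \mathrm{id}$ and hence $w_1 = v_n$, so $w_1(\alpha_{n-1}) = \alpha_1 + \cdots + \alpha_n$ by (i), which is a non-simple positive root. When $2 \le i \le n-1$, using (ii) we get
\[
w_i(\alpha_{n-1}) = x_i\, v_n(\alpha_{n-1}) = w_{0,S\setminus\{\alpha_1,\alpha_i\}}\!\left(\alpha_1 + \alpha_2 + \cdots + \alpha_{n-1}\right).
\]
The element $w_{0,S\setminus\{\alpha_1,\alpha_i\}}$ lies in the parabolic subgroup $W_{S\setminus\{\alpha_1,\alpha_i\}}$, which only modifies the coefficients of the roots $\alpha_j$ with $j \notin \{1,i\}$; in particular, the coefficients of $\alpha_1$ and $\alpha_i$ in $w_{0,S\setminus\{\alpha_1,\alpha_i\}}(\alpha_1+\cdots+\alpha_{n-1})$ remain equal to $1$. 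Since $\alpha_1+\cdots+\alpha_{n-1}$ is positive and its support already meets $\{\alpha_1,\alpha_i\}$, the image stays positive and its support still contains both $\alpha_1$ and $\alpha_i$, hence it is non-simple. This completes (iii). The only mildly subtle point in the whole argument is this last observation that parabolic Weyl-group elements preserve positivity of roots whose support meets the complementary simple roots; everything else is a bookkeeping calculation.
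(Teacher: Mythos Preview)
Your proof is correct and follows essentially the same route as the paper: part (i) is a direct calculation (you flesh it out in the $e_i$-model, the paper simply says ``follows from the usual calculation''), part (ii) is deduced from (i) together with $w_{0,S\setminus\{\alpha_1\}}(\alpha_j)=-\alpha_j$ for $j\ge2$ and Lemma~\ref{Lemma 2.1}, and part (iii) uses that $w_{0,S\setminus\{\alpha_1,\alpha_i\}}$ fixes the $\alpha_1$- and $\alpha_i$-coefficients of $\alpha_1+\cdots+\alpha_{n-1}$. The only cosmetic difference is that you treat $i=1$ separately via (i), whereas the paper runs the support argument uniformly for all $1\le i\le n-1$.
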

	\begin{proof}
		Proof of (i): follows from the usual calculation.
		
		Proof of (ii): By (i), we have $v_{n}(\alpha_{n-1})=\alpha_{1}+\cdots +\alpha_{n}.$ Note that $w_{0, S\setminus\{\alpha_{1}\}}(\alpha_{j})=-\alpha_{j}$ for all $2\le j\le n.$ Further, by Lemma \ref{Lemma 2.1}, we have $w_{0,S\setminus\{\alpha_{1}\}}(\alpha_{1})=\alpha_{0}-\alpha_{1}.$Therefore, we have $w_{0,S\setminus\{\alpha_{1}\}}v_{n}(\alpha_{n-1})=\alpha_{1}+\cdots +\alpha_{n-1}.$
		
		Proof of (iii): By (ii), we have $w_{0,S\setminus\{\alpha_{1}\}}v_{n}(\alpha_{n-1})=\alpha_{1}+\cdots +\alpha_{n-1}.$ Since the support of $w_{0, S\setminus\{\alpha_{1}\}}v_{n}(\alpha_{n-1})$ is equal to $\{\alpha_{j}: 1\le j\le n-1\},$ $w_{i}(\alpha_{n-1})=w_{0,S\setminus\{\alpha_{1},\alpha_{i}\}}(w_{0, S\setminus\{\alpha_{1}\}}v_{n}(\alpha_{n-1}))$ is a non-simple positive root for all $1\le i\le n-1.$  
	\end{proof}
	
	Then we have
	
	\begin{lemma}\label{lemma 6.3}
		
		\begin{itemize}
			
			\item[(i)] $w_{i}^{-1}(\alpha_{i})$ is a negative root for all $1\le i\le n-1.$ 
			
			\item[(ii)] For $1\le i\le n-1$ and $1\le j\le n$ such that $j\neq i,$ 
			$w_{i}^{-1}(\alpha_{j})$ is a positive root. 
		\end{itemize}
	\end{lemma}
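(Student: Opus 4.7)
The plan is to mimic the $B_n$ argument of Lemma \ref{lem5.3}, exploiting three ingredients already in hand: the factorization $w_i = x_i v_n$ with $\ell(w_i) = \ell(x_i) + \ell(v_n)$, the containment $R^{+}(v_n^{-1}) \subseteq \{\beta \in R^+ : \alpha_1 \le \beta\}$ noted just before Lemma \ref{Lemma 4.2}, and the explicit formulas for $v_n^{-1}$ acting on simple roots in Lemma \ref{Lemma 4.1}(ii).

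For part (i), I would first dispose of $i=1$: here $x_1 = e$, so $w_1 = v_n$, and Lemma \ref{Lemma 4.1}(ii) gives $v_n^{-1}(\alpha_1) = -(\alpha_1 + \cdots + \alpha_n) < 0$. For $2 \le i \le n-1$, I would write $x_i^{-1}(\alpha_i) = w_{0,S\setminus\{\alpha_1\}}(w_{0,S\setminus\{\alpha_1,\alpha_i\}}(\alpha_i))$. The inner vector is a positive root with $\alpha_i$-coefficient $1$ and no contribution from $\alpha_1$, and $w_{0,S\setminus\{\alpha_1\}}$ sends every simple root $\alpha_j$ with $j\neq 1$ to its negative (the Dynkin diagram automorphism induced by $-w_0$ of the $C_{n-1}$ subsystem is the identity), so one obtains $x_i^{-1}(\alpha_i) = -\gamma$ where $\gamma \in R^+$ has support inside $S \setminus \{\alpha_1\}$. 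The inversion-set containment then forces $v_n^{-1}(\gamma) > 0$, whence $w_i^{-1}(\alpha_i) = -v_n^{-1}(\gamma) < 0$.

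For part (ii), I would split into three subcases. When $i = 1$, Lemma \ref{Lemma 4.1}(ii) directly verifies $v_n^{-1}(\alpha_j) \in R^+$ for each $j \neq 1$. When $2 \le i \le n-1$ and $j \neq 1, i$, the element $x_i^{-1} = (w_{0,S\setminus\{\alpha_1\}})^{S\setminus\{\alpha_1,\alpha_i\}}$ is a minimum-length coset representative, so $x_i^{-1}(\alpha_j)$ is a positive root supported in $S\setminus\{\alpha_1\}$; applying $v_n^{-1}$ preserves positivity via the same containment, yielding $w_i^{-1}(\alpha_j) > 0$. The remaining subcase $j = 1$ requires an explicit computation: using Lemma \ref{lem 2.3} applied to the type $A_{i-1}$ subsystem on $\{\alpha_1, \ldots, \alpha_{i-1}\}$ (within which $\alpha_1$ is co-minuscule), I would conclude $w_{0,S\setminus\{\alpha_1,\alpha_i\}}(\alpha_1) = \alpha_1 + \alpha_2 + \cdots + \alpha_{i-1}$; then apply $w_{0,S\setminus\{\alpha_1\}}$ via Lemma \ref{Lemma 2.1} to get $x_i^{-1}(\alpha_1) = \alpha_0 - (\alpha_1 + \cdots + \alpha_{i-1})$; and finally apply $v_n^{-1}$ term-by-term using Lemma \ref{Lemma 4.1}(ii).

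The main obstacle will be the final computation in the $j=1$ subcase. Substituting $\alpha_0 = 2\alpha_1 + \cdots + 2\alpha_{n-1} + \alpha_n$, one rewrites $\alpha_0 - (\alpha_1 + \cdots + \alpha_{i-1})$ as a positive root with coefficient $1$ on $\alpha_1, \ldots, \alpha_{i-1}$, coefficient $2$ on $\alpha_i, \ldots, \alpha_{n-1}$, and coefficient $1$ on $\alpha_n$. Applying $v_n^{-1}$ then requires invoking the three distinct formulas from Lemma \ref{Lemma 4.1}(ii); tracking the cancellations and verifying that the result telescopes to the positive root $\alpha_{i-1} + \alpha_i + \cdots + \alpha_{n-1}$ is the delicate bookkeeping step.
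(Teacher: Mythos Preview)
Your proposal is correct and follows essentially the same approach as the paper's proof: the same factorization $w_i = x_i v_n$, the same use of the containment $R^{+}(v_n^{-1}) \subseteq \{\beta : \alpha_1 \le \beta\}$ for the generic subcase $j \neq 1,i$, and the same explicit computation via Lemma~\ref{lem 2.3} and Lemma~\ref{Lemma 2.1} for $j=1$, arriving at $w_i^{-1}(\alpha_1) = \alpha_{i-1} + \cdots + \alpha_{n-1}$ just as the paper does. Your organization is in fact slightly cleaner than the paper's---you treat $i=1$ explicitly in both parts (the paper's argument for (i) tacitly assumes $x_i^{-1}(\alpha_i)<0$, which fails when $i=1$), and you handle $i=2$ and $i\ge 3$ uniformly in the $j=1$ subcase where the paper splits them---but these are cosmetic differences, not different ideas.
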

	\begin{proof}
		Proof of (i): Note that $x_{i}^{-1}(\alpha_{i})=(w_{0,S\setminus\{\alpha_{1}\}})^{S\setminus\{\alpha_{1},\alpha_{i}\}}(\alpha_{i})$ is negative. Further,  for any $1\le i\le n-1,$ $w_{i}=x_{i}v_{n}$ and $\ell(w_{i})=\ell(x_{i})+\ell(v_{n}).$ Hence, $w_{i}^{-1}(\alpha_{i})$ is a negative root for $1\le i\le n-1.$
		
		Proof of (ii): For any $1\le i\le n-1,$ and $j\neq 1,i$, we observe that $x_{i}^{-1}(\alpha_{j})$ is a positive root whose support does not contain $\alpha_{1}.$ Further, $R^{+}(v_{n}^{-1})\subseteq\{\beta\in R^{+}:  \alpha_{1}\le \beta  \}.$ Therefore, $w_{i}^{-1}(\alpha_{j})$ is a positive root for $j\neq 1,i.$
		On the other hand, for  $i\ge 3$ by using Lemma \ref{lem 2.3}, we have $w_{0,S\setminus\{\alpha_{1},\alpha_{i}\}}(\alpha_{1})=\alpha_{1}+\cdots +\alpha_{i-1}.$ Note that $w_{0,S\setminus\{\alpha_{1}\}}(\alpha_{k})=-\alpha_{k}$ for $2\le k\le n.$ Further, by Lemma \ref{Lemma 2.1}, we have $w_{0,S\setminus\{\alpha_{1}\}}(\alpha_{1})=\alpha_{0}-\alpha_{1}.$ Hence, we have $w_{0,S\setminus\{\alpha_{1}\}}(\alpha_{1}+\cdots \alpha_{i-1})=\alpha_{0}-(\alpha_{1}+\cdots +\alpha_{i-1}).$ Thus $x_{i}^{-1}(\alpha_{1})=\alpha_{0}-(\alpha_{1}+\cdots +\alpha_{i-1}).$ Further, by using Lemma \ref{Lemma 4.1}(ii) $v_{n}^{-1}(x_{i}^{-1}(\alpha_{1}))=\alpha_{i-1}+\cdots +\alpha_{n-1}.$ For $i=2,$ by using Lemma \ref{Lemma 2.1}, we have $x_{i}^{-1}(\alpha_{1})=\alpha_{0}-\alpha_{1}.$ Further, by using Lemma \ref{Lemma 4.1}, we have $v_{n}^{-1}(\alpha_{0}-\alpha_{1})=\alpha_{1}+\cdots +\alpha_{n-1}.$ Therefore, $w_{2}^{-1}(\alpha_{1})$ is a positive root.
	\end{proof}
	Here, we observe that all positive roots of $C_{n}$ $(n\ge 3),$ are of the following form
	\begin{itemize}
		\item  $\alpha_{i}$ for all $1\le i\le n.$
		
		\item  $\alpha_{i}+\cdots +\alpha_{j}$ for all $1\le i<j\le n.$
		
		\item  $2\alpha_{i}+\cdots +2\alpha_{n-1}+\alpha_{n}$ for all $1\le i\le n-1.$
		
		\item  $\alpha_{i}+\cdots +\alpha_{j-1}+2\alpha_{j}+\cdots+2\alpha_{n-1}+\alpha_{n}$ for all $1\le i<j\le n-1.$
	\end{itemize}
	
	Let $\beta_{j}=2(\alpha_{j}+\cdots +\alpha_{n-1})+\alpha_{n}$ for all $1\le j\le n-1.$ Let  $\beta_{i,j}=\alpha_{i}+\cdots+\alpha_{j-1}+2(\alpha_{j}+\cdots +\alpha_{n-1})+\alpha_{n}$ for all $1\le i< j\le n-1.$ Let $\gamma_{i}=\alpha_{i}+\cdots +\alpha_{n}$ for all $1\le i\le n.$ 
	\begin{lemma}\label{Lemma 4.4} 
		$H^0(s_{k}s_{k+1}\cdots s_{n},\mathfrak{b})_{\mu}\neq 0$ for  $\mu\in \{-\beta_{i}, \beta_{i,i+1},\ldots, -\beta_{i,n-1}, -\gamma_{i}: 1\le i \le k-1 \}$ and $H^0(s_{k}s_{k+1}\cdots s_{n},\mathfrak{b})_{\mu}=0$ for $\mu \in\{-\beta_{k},-\beta_{k,k+1},\ldots,-\beta_{k,n-1},-\gamma_{k}\},$ where  $2\le k\le n-2.$
	\end{lemma}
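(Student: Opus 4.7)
The plan is to prove the lemma by downward induction on $k$, using the Demazure recursion
\[
H^0(s_k s_{k+1}\cdots s_n,\mathfrak{b})\;=\;H^0\bigl(s_k,\,H^0(s_{k+1}\cdots s_n,\mathfrak{b})\bigr),
\]
valid because $\ell(s_k s_{k+1}\cdots s_n)=1+\ell(s_{k+1}\cdots s_n)$. Write $u_k:=s_k s_{k+1}\cdots s_n$. The base case is $k=n-2$: I would compute $H^0(s_n,\mathfrak{b})$ directly from the $\hat B_{\alpha_n}$-indecomposable decomposition of $\mathfrak{b}$ (Lemma \ref{lem 3.2}) via Lemma \ref{lem 3.1}, then iterate through $H^0(s_{n-1},-)$ and $H^0(s_{n-2},-)$ in the same fashion, reading off the asserted pattern of weights.

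\textbf{Inductive step.} Assume the lemma for $k+1$. By Lemma \ref{lem 3.2}, decompose $H^0(u_{k+1},\mathfrak{b})=\bigoplus_t V'_t\otimes\mathbb{C}_{\lambda_t}$ as a direct sum of indecomposable $\hat B_{\alpha_k}$-modules. For each weight $\mu\in\{-\beta_k,-\beta_{k,k+1},\ldots,-\beta_{k,n-1},-\gamma_k\}$ I would identify the summand containing the $\mu$-weight vector by computing the $\alpha_k$-string through $\mu$ in the $C_n$ root system (using the standard $e_i$-basis) and invoking the inductive hypothesis to determine which neighbouring weights are absent from $H^0(u_{k+1},\mathfrak{b})$. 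The relevant strings are the length-$3$ string $\{-\beta_k,-\beta_{k,k+1},-\beta_{k+1}\}$ with $\alpha_k$-pairings $(-2,0,2)$, the length-$2$ strings $\{-\beta_{k,j},-\beta_{k+1,j}\}$ for $j>k+1$ with pairings $(-1,+1)$, and $\{-\gamma_k,-\gamma_{k+1}\}$ with pairings $(-1,+1)$.

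\textbf{Vanishing analysis.} In each of the length-$2$ strings the upper weight belongs to the $i=k+1$ block and is absent from $H^0(u_{k+1},\mathfrak{b})$ by the IH; since $-\gamma_k-\alpha_k$ and $-\beta_{k,j}-\alpha_k$ (for $j>k+1$) are not roots, the $\hat B_{\alpha_k}$-summand containing the surviving lower weight is the one-dimensional character $\mathbb{C}_\lambda$ with $\langle\lambda,\alpha_k\rangle=-1$, and Lemma \ref{lem 3.1}(3) kills it. For the length-$3$ string, the top weight $-\beta_{k+1}$ is absent by the IH, while $H^0(u_{k+1},\mathfrak{b})$ is a $B$-module and hence closed under $e_{-\alpha_k}\colon v_{-\beta_{k,k+1}}\mapsto c\cdot v_{-\beta_k}$ with $c\neq 0$; therefore $v_{-\beta_k}$ and $v_{-\beta_{k,k+1}}$ lie in a common two-dimensional summand $V'\otimes\mathbb{C}_\lambda$ with $V'$ the standard $2$-dimensional $\hat L_{\alpha_k}$-module and $\langle\lambda,\alpha_k\rangle=-1$, so Lemma \ref{lem 3.1}(3) again kills the summand. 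This proves that $H^0(u_k,\mathfrak{b})_\mu=0$ for all the claimed $\mu$. For the complementary weights $-\beta_i,-\beta_{i,j},-\gamma_i$ with $i\le k-1$, a parallel $\alpha_k$-string analysis places them in indecomposables $V'\otimes \mathbb{C}_\lambda$ with $\langle\lambda,\alpha_k\rangle\ge 0$, so Lemma \ref{lem 3.1}(1) gives a nonzero $H^0(s_k,V'\otimes\mathbb{C}_\lambda)$ that retains the weight $\mu$, and these weights persist.

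\textbf{Main obstacle.} The delicate step is the identification of the indecomposable containing $-\beta_{k,k+1}$: taken alone this weight has $\langle-\beta_{k,k+1},\alpha_k\rangle=0$, so if it sat in a one-dimensional summand it would survive by Lemma \ref{lem 3.1}(1). The non-triviality of $e_{-\alpha_k}$ in $\mathfrak{b}$, together with the inductive absence of $-\beta_{k+1}$ from $H^0(u_{k+1},\mathfrak{b})$, is exactly what forces the two-dimensional structure and so kills $-\beta_{k,k+1}$. Propagating this indecomposable-level information (and not merely the set of weights) across the induction is the main technical bookkeeping; once that is in hand, the argument reduces to mechanical case-by-case application of Lemmas \ref{lem 3.1} and \ref{lem 3.2}.
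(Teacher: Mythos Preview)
Your proposal is correct and follows essentially the same route as the paper: descending induction on $k$, starting from an explicit computation of $H^0(s_{n-1}s_n,\mathfrak{b})$ (the paper takes $k=n-1$ as base, you take $k=n-2$; the difference is cosmetic) and then passing from $u_{k+1}$ to $u_k$ via Lemma~\ref{lem 3.1} applied to the $\hat B_{\alpha_k}$-indecomposable summands. Your $\alpha_k$-string analysis of $-\beta_i,-\beta_{i,j},-\gamma_i$ and the identification of the critical two-dimensional summand $\mathbb{C}_{-\beta_{k,k+1}}\oplus\mathbb{C}_{-\beta_k}$ match the paper's computation exactly.

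One remark on the point you flag as the ``main obstacle'': your justification that $e_{-\alpha_k}$ acts nontrivially on $v_{-\beta_{k,k+1}}$ inside $H^0(u_{k+1},\mathfrak{b})$ is in fact cleaner than what the paper writes. The paper simply asserts (via Lemma~\ref{lem 3.2}) that $\mathbb{C}_{-\beta_k}\oplus\mathbb{C}_{-\beta_{k,k+1}}$ is a two-dimensional indecomposable $\hat B_{\alpha_k}$-summand without explaining why it cannot split. Your observation that $H^0(u_{k+1},\mathfrak{b})\hookrightarrow H^0(u_{k+1},\mathfrak{g})=\mathfrak{g}$ as $B$-modules, combined with the nonvanishing of the bracket $[\mathfrak{g}_{-\alpha_k},\mathfrak{g}_{-\beta_{k,k+1}}]=\mathfrak{g}_{-\beta_k}$ in $\mathfrak{g}$, supplies the missing reason and also shows that no extra ``indecomposable-level'' bookkeeping needs to be carried through the induction: the embedding into $\mathfrak{g}$ is available at every stage.
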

	\begin{proof}
		We prove the lemma by descending induction.
		
		First we prove the base case for $k=n-1.$ Since $\mathbb{C}_{-\beta_{i,n-1}}$'s and $\mathbb{C}_{-\beta_{n-1}}$ are one dimensional indecomposable $\hat{L}_{\alpha_{n}}$-summands of $\mathfrak{b}$, by Lemma \ref{lem 3.1},  $H^0(s_{n},\mathfrak{b})_{-\beta_{i,n-1}}\neq 0$ for all $1\le i \le  n-2,$ and $H^0(s_{n},\mathfrak{b})_{-\beta_{n-1}}\neq 0.$ Since  $\mathbb{C}_{-\gamma_{i}+\alpha_{n}}\oplus \mathbb{C}_{-\gamma_{i}}$'s are the standard two dimensional indecomposable $\hat{L}_{\alpha_{n}}$-summands of $\mathfrak{b},$ by Lemma \ref{lem 3.1} we have  $H^0(s_{n},\mathfrak{b})_{-\gamma_{i}}\neq 0$  and $H^0(s_{n},\mathfrak{b})_{-\gamma_{i}+\alpha_{n}}\neq 0$ for all $1\le i\le n-1.$ Then $\mathbb{C}_{-\gamma_{i}}\oplus \mathbb{C}_{-\beta_{i,n-1}}$'s are standard two dimensional irreducible $\hat{L}_{\alpha_{n-1}}$-summands of $H^0(s_{n},\mathfrak{b})$ for all $1\le i\le n-2.$ Therefore, by Lemma \ref{lem 3.1} $H^0(s_{n-1}s_{n},\mathfrak{b})_{-\beta_{i,n-1}}\neq 0,$ and $H^0(s_{n-1}s_{n},\mathfrak{b})_{-\gamma_{i}}\neq 0$ for all $1\le i\le n-2.$ Further, since $\langle -\beta_{i,j+1},\alpha_{n-1}\rangle =0$ for all $1\le i\le  j\le n-1$ such that $i\le n-3$ by Lemma \ref{lem 3.1}, we have   $H^0(s_{n-1}s_{n},\mathfrak{b})_{-\beta_{i,j+1}}\neq 0$ for all $1\le i\le j\le n-3.$  Moreover, since $\mathbb{C}_{-\beta_{n-1}}\oplus \mathbb{C}_{-\beta_{n-1}+\alpha_{n-1}}=V\otimes \mathbb{C}_{-\omega_{n-1}},$ where $V$ is the standard two dimensional irreducible module, by Lemma \ref{lem 3.1} we have $H^0(s_{n-1}s_{n},\mathfrak{b})_{-\beta_{n-1}}=0$ and $H^0(s_{n-1}s_{n},\mathfrak{b})_{-\beta_{n-1}+\alpha_{n-1}}=0.$
		Therefore, combining the above discussion we have the following $H^0(s_{n-1}s_{n},\mathfrak{b})_{\mu}\neq 0$ for all $\mu \in M,$ where $M=\{-\beta_{i},-\beta_{i,i+1},\ldots,-\beta_{i,n-1},-\gamma_{i}: 1\le i\le n-2\}$ and $H^0(s_{n-1}s_{n},\mathfrak{b})_{-\beta_{n-1}}=0,H^0(s_{n-1}s_{n},\mathfrak{b})_{-\gamma_{n-1}}=0.$ Now by induction hypothesis we assume that $H^0(s_{k+1}\cdots s_{n-1}s_{n},\mathfrak{b})_{\mu}\neq 0$ for all $\mu\in  \{-\beta_{i},-\beta_{i,i+1},\ldots, -\beta_{i,n-1},-\gamma_{i}:  1\le i\le k\}$ and 
		$H^0(s_{k+1}\cdots s_{n},\mathfrak{b})_{\mu}=0$ for all $\mu \in \{-\beta_{k+1},-\beta_{k+1,k+2},\ldots,-\beta_{k+1,n-1},-\gamma_{k+1} \},$ where $k+1\le n-3.$

		Then we note that $\langle -\beta_{i}, \alpha_{k} \rangle =0,$ $\langle-\gamma_{i},\alpha_{k}  \rangle =0$ for all $i\le k-1.$ Also, $\langle -\beta_{i,k}, \alpha_{k} \rangle =-1,$ $\langle -\beta_{i,k+1}, \alpha_{k} \rangle =1$ for all $1\le i\le k-1,$ and  $\langle -\beta_{i,j+1}, \alpha_{k} \rangle =0$ for all $1\le i\le j\le n-2$ such that $i\le k-1$ and $j\neq k-1,k.$ By using Lemma \ref{lem 3.2}, we have  $ \mathbb{C}_{-\beta_{k}}\oplus \mathbb{C}_{-\beta_{k}+\alpha_{k}}=V\otimes \mathbb{C}_{-\alpha_{k}}$ where $V$ is the standard two dimensional indecomposable $B_{\alpha_{k}}$-submodule of $H^0(s_{k+1}\cdots s_{n-1}s_{n},\mathfrak{b}).$ On the other hand, since $H^0(s_{k+1}\cdots s_{n-1}s_{n},\alpha_{n})_{\mu_{j}}=0$ for all $\mu_{j}=-\beta_{k+1,j+1},-\gamma_{k+1}$ where $k+2 \le j\le n-2,$ and $\langle -\beta_{k,j+1}, \alpha_{k} \rangle=-1,$ $\langle -\gamma_{k}, \alpha_{k} \rangle=-1$ for all $k+1\le j\le n-2,$  $\mathbb{C}_{-\beta_{k,j+1}}$'s and $\mathbb{C}_{-\gamma_{k}}$ are  one dimensional indecomposable $\hat{L}_{\alpha_{k}}$-summands of $H^0(s_{k+1}\cdots s_{n-1}s_{n},\mathfrak{b})$ for all $k+2 \le j\le n-2.$ Therefore, by combining the above discussion together with Lemma \ref{lem 3.1} we have  $H^0(s_{k}s_{k+1}\cdots s_{n},\mathfrak{b})_{\mu}\neq 0$ for all  $\mu\in \{-\beta_{i}, -\beta_{i,i+1},\ldots, -\beta_{i,n-1},-\gamma_{i}: 1\le i \le k \}$ and
		$H^0(s_{k}s_{k+1}\cdots s_{n},\mathfrak{b})_{\mu}=0$ for all $\mu \in \{-\beta_{k+1},-\beta_{k+1,k+2},\ldots,-\beta_{k+1,n-1},-\gamma_{k+1} \},$ where  $2\le k\le n-2.$
	\end{proof}
	
	\begin{lemma}\label{cor 4.5}Let $u_{k}=s_{k}\cdots s_{n}$ for all $1\le k\le n-1.$ Then we have $H^1(u_{k},\mathfrak{b})=0.$ In particular, we have $H^1(v_{n},\mathfrak{b})=0$ as $u_{1}=v_{n}.$
	\end{lemma}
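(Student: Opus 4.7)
The plan is to establish $H^1(u_k, \mathfrak{b}) = 0$ by descending induction on $k$, starting from $k = n-1$.

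For the base case $k = n-1$, I would first show $H^1(s_n, \mathfrak{b}) = 0$ by decomposing $\mathfrak{b}$ into indecomposable $B_{\alpha_n}$-summands via Lemma \ref{lem 3.2} and applying Lemma \ref{lem 3.1}: the only potentially non-vanishing summand is the two-dimensional piece $\mathbb{C} h(\alpha_n) \oplus \mathbb{C}_{-\alpha_n} \cong V \otimes \mathbb{C}_{-\omega_{\alpha_n}}$, on which $H^1(s_n, -) = 0$ by Lemma \ref{lem 3.1}(3) because $\langle -\omega_{\alpha_n}, \alpha_n \rangle = -1$. The SES machinery used in the proof of Lemma \ref{lem5.6} then yields $H^1(s_{n-1} s_n, \mathfrak{b}) = H^1(s_{n-1}, H^0(s_n, \mathfrak{b}))$, and the weights $\mu$ of $H^0(s_n, \mathfrak{b})$ with $\langle \mu, \alpha_{n-1} \rangle \le -2$ are exactly $\mu = -\alpha_{n-1}$ (handled by the two-dim summand $\mathbb{C} h(\alpha_{n-1}) \oplus \mathbb{C}_{-\alpha_{n-1}}$ produced by Lemma \ref{lem 5.4}) and $\mu = -\beta_{n-1}$ (which pairs with $-\gamma_{n-1} = -\beta_{n-1} + \alpha_{n-1}$ to form a two-dim indecomposable $B_{\alpha_{n-1}}$-summand of the form $V \otimes \mathbb{C}_\lambda$ with $\langle \lambda, \alpha_{n-1} \rangle = -1$, again killed by Lemma \ref{lem 3.1}(3)).

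For the inductive step, assume $H^1(u_{k+1}, \mathfrak{b}) = 0$ for some $1 \le k \le n-2$. The same SES combined with the inductive hypothesis yields
\[
H^1(u_k, \mathfrak{b}) = H^1\bigl(s_k, H^0(u_{k+1}, \mathfrak{b})\bigr).
\]
By Lemma \ref{lem 3.2} each indecomposable $B_{\alpha_k}$-summand of $H^0(u_{k+1}, \mathfrak{b})$ is of the form $V' \otimes \mathbb{C}_\lambda$, contributing to $H^1$ only if $\langle \lambda, \alpha_k \rangle \le -2$ (Lemma \ref{lem 3.1}). A direct root-system calculation shows that the only weights $\mu$ of $\mathfrak{b}$ with $\langle \mu, \alpha_k \rangle \le -2$ are $\mu = -\alpha_k$ and $\mu = -\beta_k$. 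Lemma \ref{lem 5.4} dispatches $-\alpha_k$. For $-\beta_k$, Lemma \ref{Lemma 4.4} applied to $u_{k+1}$ confirms that both $-\beta_k$ and $-\beta_k + \alpha_k = -\beta_{k,k+1}$ are weights of $H^0(u_{k+1}, \mathfrak{b})$ while $-\beta_k + 2\alpha_k = -\beta_{k+1}$ is not, so the two surviving weights span a two-dim indecomposable $B_{\alpha_k}$-summand of the form $V \otimes \mathbb{C}_\lambda$ with $\langle \lambda, \alpha_k \rangle = -1$, annihilated by Lemma \ref{lem 3.1}(3).

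The main obstacle is to verify that $\mathbb{C}_{-\beta_k}$ and $\mathbb{C}_{-\beta_{k,k+1}}$ (respectively $\mathbb{C}_{-\beta_{n-1}}$ and $\mathbb{C}_{-\gamma_{n-1}}$ in the base case) really lie inside a common indecomposable $B_{\alpha_k}$-summand of $H^0(u_{k+1}, \mathfrak{b})$, rather than in two separate one-dimensional summands. I would prove this using $B$-equivariance: inside $\mathfrak{b}$ one has $[e_{-\alpha_k}, e_{-\beta_{k,k+1}}] = c \cdot e_{-\beta_k}$ with $c \neq 0$ (since $-\beta_{k,k+1} - \alpha_k = -\beta_k$ is a root), and functoriality of $H^0(u_{k+1}, -)$ forces the lifted weight vectors in $H^0(u_{k+1}, \mathfrak{b})$ to satisfy the same commutator relation up to nonzero scalars, placing them in a common $B_{\alpha_k}$-summand; the analogous relation $[e_{-\alpha_{n-1}}, e_{-\gamma_{n-1}}] = c' \cdot e_{-\beta_{n-1}}$ handles the base case.
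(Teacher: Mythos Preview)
Your proposal is correct and follows essentially the same descending induction as the paper: the base case $H^1(s_{n-1}s_n,\mathfrak{b})=0$ is handled by showing $H^1(s_n,\mathfrak{b})=0$ (since $\alpha_n$ is long) and then identifying $-\alpha_{n-1}$ and $-\beta_{n-1}$ (paired with $-\gamma_{n-1}$) as the only weights of $H^0(s_n,\mathfrak{b})$ with $\langle\mu,\alpha_{n-1}\rangle=-2$; the inductive step uses the SES together with Lemma~\ref{Lemma 4.4} to see that $-\alpha_k$ and $-\beta_k$ are the only such weights of $H^0(u_{k+1},\mathfrak{b})$ and that each sits in a two-dimensional $B_{\alpha_k}$-summand of the form $V\otimes\mathbb{C}_\lambda$ with $\langle\lambda,\alpha_k\rangle=-1$. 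The paper writes the partner of $-\beta_k$ as $-(\alpha_k+\beta_k)$, evidently a misprint for $-\beta_k+\alpha_k=-\beta_{k,k+1}$, and simply asserts the indecomposability that you rightly flag as the delicate point; your commutator idea is sound, but note that ``functoriality of $H^0(u_{k+1},-)$'' alone does not transport the nonvanishing of $e_{-\alpha_k}\cdot e_{-\beta_{k,k+1}}$ from $\mathfrak{b}$ to $H^0(u_{k+1},\mathfrak{b})$ --- one must track the $B$-module (not merely weight-space) structure through the iterated $H^0(s_j,-)$ for $j>k$, which is routine but not literally a consequence of functoriality.
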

	\begin{proof}
		We prove by the descending induction on $k.$ We first prove the statement for  $k=n-1,$ i.e., $H^1(s_{n-1}s_{n},\mathfrak{b})=0.$ Since $\alpha_{n}$ is a long root, $\langle \beta, \alpha_{n}\rangle = 1,0$ or $-1$ for any root $\beta.$ Therefore, by using Lemma \ref{lem 3.1},  we have $H^1(s_{n},\mathfrak{b})=0.$ Hence, we have $H^0(s_{n-1},H^1(s_{n},\mathfrak{b}))=0.$ On the other hand,  $\mathbb{C}h(\alpha_{n-1})\oplus\mathbb{C}_{-\alpha_{n-1}}$ and $\mathbb{C}_{-(\alpha_{n-1}+\alpha_{n})}\oplus \mathbb{C}_{-(2\alpha_{n-1}+\alpha_{n})}$ are indecomposable $B_{\alpha_{n-1}}$-submodules of  $H^0(s_{n},\mathfrak{b}).$  Then by Lemma \ref{lem 3.2}, $\mathbb{C}h(\alpha_{n})\oplus\mathbb{C}_{-\alpha_{n}}=V\otimes\mathbb{C}_{-\omega_{n-1}} $ and $\mathbb{C}_{-(\alpha_{n-1}+\alpha_{n})}\oplus \mathbb{C}_{-(2\alpha_{n-1}+\alpha_{n})}=V\otimes \mathbb{C}_{-\omega_{n-1}},$ where $V$ is the standard two dimensional $\hat{L}_{\alpha_{n-1}}$-modules. Further, we have $\langle \beta , \alpha_{n-1} \rangle =0,1,$or $-1$ for any $\beta$ such that $H^0(s_{n},\mathfrak{b})_{\beta}\neq 0$ and $\beta \neq -\alpha_{n-1},-(2\alpha_{n-1}+\alpha_{n}).$ Thus, by using Lemma \ref{lem 3.1}, we have $H^1(s_{n-1},H^0(s_{n},\mathfrak{b}))=0.$ Therefore, we have $H^1(s_{n-1}s_{n},\mathfrak{b})=0.$ 
		
		Now by induction hypothesis e assume that $H^1(u_{j},\mathfrak{b})=0$ for all $k+1\le j\le n-1.$

		Note that $-\beta_{k}$ is the only negative root other than $-\alpha_{k}$ such that $\langle -\beta_{k},\alpha_{k} \rangle =-2.$ On the other hand, by Lemma \ref{Lemma 4.4}, $\mathbb{C}h(\alpha_{k})\oplus \mathbb{C}_{-\alpha_{k}}=V\otimes\mathbb{C}_{-\omega_{k}}$ and  $\mathbb{C}_{-(\alpha_{k}+\beta_{k})}\oplus \mathbb{C}_{-\beta_{k}}=V\otimes\mathbb{C}_{-\omega_{k}}$ are indecomposable $\hat{B}_{\alpha_{k}}$-summands of $H^0(s_{k+1}\cdots s_{n-1}s_{n},\mathfrak{b})$ where $V$ is the standard two dimensional irreducible $\hat{L}_{\alpha_{k}}$-module. Therefore, $H^1(s_{k},H^0(u_{k+1},\mathfrak{b}))=0.$
		
		By induction hypothesis we have $H^1(u_{k+1},\mathfrak{b})=0.$ Therefore, by combining the above discussion together with using SES
		\begin{center}
			$0\longrightarrow H^1(s_{k},H^0(u_{k+1},\mathfrak{b}))\longrightarrow H^1(u_{k},\mathfrak{b})\longrightarrow H^0(s_{k},H^1(u_{k+1},\mathfrak{b}))\longrightarrow0$ 
		\end{center}
		we have $H^1(u_{k},\mathfrak{b})=0.$ 
	\end{proof}
	
	We note that by \cite[Lemma 6.2, p.779]{Kan}, $H^j(v_{n}, \mathfrak{b})=0$  for $j\ge 2.$ Now we show that $H^j(v_{n}, \mathfrak{b})=0$ for $j=0,1.$
	\begin{corollary}\label{cor 6.6} We have the following
		\begin{itemize}
			\item [(i)] $H^j(v_{n},\mathfrak{b})=0$ for all $j=0,1.$
			
			\item [(ii)] $H^j(v_{n},\alpha_{n-1})=0$ for all $j=0,1.$
			
			\item [(iii)] $H^j(v_{n},\mathfrak{p}_{n-1})=0$ for all $j=0,1.$
			
			\item [(iv)] $H^j(w_{i},\mathfrak{p}_{n-1})=0$ for all $1\le i\le n-1$ and $j=0,1.$
		\end{itemize}
	\end{corollary}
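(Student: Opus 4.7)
The argument will parallel the $B_n$ case handled in Corollary~\ref{cor5.7}. The key ingredients already in place are the $H^1$-vanishing of Corollary~\ref{cor 4.5}, the defining property $v_n^{-1}(\alpha_0)=-\alpha_n$, the Cartan datum in type $C_n$, and the standard SES/Leray decomposition for a reduced product.

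For part (i), since $v_n=u_1$, Corollary~\ref{cor 4.5} gives $H^1(v_n,\mathfrak{b})=0$ directly. For the $H^0$-statement, $v_n^{-1}(\alpha_0)=-\alpha_n$ is a negative root, so the same invocation of the proof of \cite[Theorem 4.1, p.771]{Kan} used in Corollary~\ref{cor5.7}(i) forces $H^0(v_n,\mathfrak{b})=0$. For part (ii), I would take the reduced expression $v_n=(s_1 s_2\cdots s_{n-1})\,s_n$. In type $C_n$, since $\alpha_{n-1}$ is short and $\alpha_n$ is long with a double bond, $\langle\alpha_{n-1},\alpha_n\rangle=-1$, so Lemma~\ref{lem 3.1}(3) gives $H^j(s_n,\mathbb{C}_{\alpha_{n-1}})=0$ for every $j\geq 0$. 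Propagating along the reduced expression via the Leray-type SES for the fibration $X(v_n)\to X(s_1\cdots s_{n-1})$ with fiber $P_n/B$ then yields $H^j(v_n,\alpha_{n-1})=0$ for all $j\geq 0$, in particular for $j=0,1$.

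For part (iii), apply the long exact sequence obtained by twisting the short exact sequence of $B$-modules
\[
0\longrightarrow\mathfrak{b}\longrightarrow\mathfrak{p}_{n-1}\longrightarrow\mathbb{C}_{\alpha_{n-1}}\longrightarrow 0
\]
to vector bundles on $X(v_n)$. The fragment
\[
H^j(v_n,\mathfrak{b})\longrightarrow H^j(v_n,\mathfrak{p}_{n-1})\longrightarrow H^j(v_n,\alpha_{n-1})
\]
together with (i) and (ii) forces $H^j(v_n,\mathfrak{p}_{n-1})=0$ for $j=0,1$. For part (iv), recall $w_i=x_i v_n$ with $\ell(w_i)=\ell(x_i)+\ell(v_n)$. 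The standard SES expressing $H^\bullet(w_i,V)$ in terms of $H^\bullet(x_i,H^\bullet(v_n,V))$ combined with (iii) then yields $H^j(w_i,\mathfrak{p}_{n-1})=0$ for $j=0,1$ and all $1\leq i\leq n-1$.

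Since the serious cohomological work was done in Lemma~\ref{Lemma 4.4} and Corollary~\ref{cor 4.5}, the present corollary is essentially formal. The only mild subtlety I anticipate is the bookkeeping around part (ii): one must choose the reduced expression of $v_n$ ending in $s_n$ (rather than some other simple reflection) so that the $-1$ Cartan value makes Lemma~\ref{lem 3.1}(3) applicable; once that choice is made the vanishing is automatic.
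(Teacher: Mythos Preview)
Your proposal is correct and follows essentially the same route as the paper: part (i) via Lemma~\ref{cor 4.5} and \cite[Theorem 4.1]{Kan}, part (ii) via $\langle\alpha_{n-1},\alpha_n\rangle=-1$ and Lemma~\ref{lem 3.1}(3) applied at the rightmost reflection $s_n$ of $v_n$, part (iii) via the long exact sequence from $0\to\mathfrak{b}\to\mathfrak{p}_{n-1}\to\mathbb{C}_{\alpha_{n-1}}\to 0$, and part (iv) via the length-additive factorization $w_i=x_iv_n$ and the SES. Your treatment of (ii) is in fact slightly more explicit than the paper's, which invokes Lemma~\ref{lem 3.1}(3) directly for $v_n$ without spelling out the propagation along the reduced word.
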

	\begin{proof}
		Proof of (i): By Lemma \ref{cor 4.5}, $H^{1}(v_{n},\mathfrak{b})=0.$ On the other hand, since $v_{n}^{-1}(\alpha_{0})=-\alpha_{n},$ by the proof of \cite[Theorem 4.1, p.771]{Kan}, we have $H^0(v_{n},\mathfrak{b})=0.$ 
		
		Proof of (ii): Since $\langle \alpha_{n-1},\alpha_{n}\rangle=-1,$ by Lemma \ref{lem 3.1}(3), we have $H^j(v_{n},\alpha_{n-1})=0$ for $j=0,1.$

		Proof of (iii):
		Consider the exact sequence
		\begin{center}
			$0\longrightarrow \mathfrak{b}\longrightarrow\mathfrak{p}_{n-1}\longrightarrow\mathbb{C}_{\alpha_{n-1}}\longrightarrow0$
		\end{center}
		of $B$-modules.
		
		Then we have the following long exact sequence 
		\begin{center}
			$0\longrightarrow H^0(v_{n-1},\mathfrak{b})\longrightarrow H^0(v_{n-1},\mathfrak{p}_{n-1})\longrightarrow H^0(v_{n-1},\alpha_{n-1})\longrightarrow$ 	$H^1(v_{n-1}, \mathfrak{b})\longrightarrow H^1(v_{n-1},\mathfrak{p}_{n-1})\longrightarrow H^1(v_{n-1},\alpha_{n-1})\longrightarrow H^2(v_{n-1},\mathfrak{b})\longrightarrow\cdots$
		\end{center}
		of $B$-modules.
		
		Therefore, by using $(i)$ and $(ii),$ proof of $(iii)$ follows.

		Proof of (iv): Since $w_{i}=x_{i}v_{n}$ is such that $\ell(w_{i})=\ell(x_{i})+\ell(v_{n})$ for all $1\le i\le n-1,$ by (iii) and using SES proof of (iv) follows. 
	\end{proof}

	\begin{proposition}\label{prop6.7} We have $w_{i}\in W^{P_{n-1}}$ and $P_{i}=Aut^0(X_{P_{n-1}}(w_{i}))$ for all $1\le i\le n-1.$
	\end{proposition}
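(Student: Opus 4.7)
The plan is to mirror the proof of Proposition \ref{prop5.8} given in the $B_n$ case, substituting the parabolic $P_{n-1}$ for $P_n$ and using the element $v_n$ satisfying $v_n^{-1}(\alpha_0) = -\alpha_n$. First I would observe that $w_i \in W^{P_{n-1}}$ is immediate from Lemma \ref{Lemma 4.2}(iii), which asserts that $w_i(\alpha_{n-1})$ is a non-simple positive root. Combining parts (i) and (ii) of Lemma \ref{lemma 6.3}, we have $w_i^{-1}(\alpha_i) < 0$ while $w_i^{-1}(\alpha_j) > 0$ for every $j \neq i$, so the stabilizer of $X_{P_{n-1}}(w_i)$ in $G$ is precisely $P_i$.

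Next, to produce an injective natural homomorphism $\varphi \colon P_i \hookrightarrow Aut^0(X_{P_{n-1}}(w_i))$ via \cite[Theorem 6.6, p.781]{Kan}, I would verify that $w_i^{-1}(\alpha_0) < 0$. Since $\alpha_0 = 2\omega_1$ in type $C_n$, we have $\langle \alpha_0, \alpha_j\rangle = 0$ for all $j \neq 1$, so every element of $W_{S\setminus\{\alpha_1\}}$ fixes $\alpha_0$. In particular, both $w_{0,S\setminus\{\alpha_1\}}$ and $w_{0,S\setminus\{\alpha_1,\alpha_i\}}$ fix $\alpha_0$, hence $w_i^{-1}(\alpha_0) = v_n^{-1}(\alpha_0) = -\alpha_n$ is negative.

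For surjectivity, I would invoke the short exact sequence
\begin{center}
$0 \longrightarrow \mathfrak{p}_{n-1} \longrightarrow \mathfrak{g} \longrightarrow \mathfrak{g}/\mathfrak{p}_{n-1} \longrightarrow 0$
\end{center}
of $B$-modules. By Corollary \ref{cor 6.6}(iv), $H^j(w_i, \mathfrak{p}_{n-1}) = 0$ for $j=0,1$ and $1 \leq i \leq n-1$, so the induced long exact sequence yields $H^0(w_i, \mathfrak{g}/\mathfrak{p}_{n-1}) \cong H^0(w_i, \mathfrak{g}) = \mathfrak{g}$. Since $d\varphi(\mathfrak{p}_i) \subseteq \mathrm{Lie}(Aut^0(X_{P_{n-1}}(w_i))) \subseteq H^0(w_i, \mathfrak{g}/\mathfrak{p}_{n-1}) = \mathfrak{g}$ and the base field is $\mathbb{C}$, by \cite[Theorem 12.5, p.85 and Theorem 13.1, p.87]{Hum2} the group $Aut^0(X_{P_{n-1}}(w_i))$ is a closed subgroup of $G$ containing $P_i$. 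Being contained in the stabilizer of $X_{P_{n-1}}(w_i)$ in $G$, which equals $P_i$, we conclude $Aut^0(X_{P_{n-1}}(w_i)) = P_i$.

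The main obstacle for the proposition itself is minor, as the heavy lifting has been done in the preparatory lemmas: the delicate cohomology vanishing in Lemma \ref{cor 4.5} giving $H^1(v_n, \mathfrak{b}) = 0$, and the root-system bookkeeping of Lemmas \ref{Lemma 4.2} and \ref{lemma 6.3}. The only nontrivial verification inside the proof itself is the sign of $w_i^{-1}(\alpha_0)$, and that reduces immediately to the observation that every element of $W_{S\setminus\{\alpha_1\}}$ fixes $\alpha_0$ in type $C_n$.
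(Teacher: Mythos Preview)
Your proposal is correct and follows essentially the same approach as the paper's own proof: both use Lemma \ref{Lemma 4.2}(iii) for $w_i\in W^{P_{n-1}}$, Lemma \ref{lemma 6.3} to identify the stabilizer as $P_i$, the observation that $w_{0,S\setminus\{\alpha_1\}}w_{0,S\setminus\{\alpha_1,\alpha_i\}}$ fixes $\alpha_0$ to get $w_i^{-1}(\alpha_0)<0$, \cite[Theorem 6.6]{Kan} for injectivity of $\varphi$, and Corollary \ref{cor 6.6}(iv) together with the short exact sequence to force $Aut^0(X_{P_{n-1}}(w_i))\subseteq G$ and hence equal to $P_i$.
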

	\begin{proof}
		Recall that $w_{i}=w_{0,S\setminus \{\alpha_{1}, \alpha_{i}\}} w_{0, S\setminus \{\alpha_{1}\}} v_{n}$ for $1\le i\le n-1.$  By Lemma \ref{Lemma 4.2}(iii), we conclude  that $w_{i}(\alpha_{n-1})$ is a non-simple positive root.
		
		On the other hand, by Lemma \ref{lemma 6.3}, we have the following:
		
		\begin{itemize}
			\item[(i)]  $w_{i}^{-1}(\alpha_{i})$ is a negative root.\\
			
			\item[(ii)]  $w_{i}^{-1}(\alpha_{j})$ is a positive root for $j\neq i.$
		\end{itemize}

		Thus $P_{i}$ is the stabilizer of $X_{P_{n-1}}(w_{i})$ in $G.$ Since $\alpha_{0}=2\omega_{1},$ and  $v_{n}^{-1}(\alpha_{0})$ is a negative root, $w_{i}^{-1}(\alpha_{0})=v_{n}^{-1}(\alpha_{0})$ (as$~w_{0,S\setminus\{\alpha_{1}\}}w_{0,S\setminus\{\alpha_{1}, \alpha_{i}\}}(\alpha_{0})=\alpha_{0}$) is a negative root. Therefore, by using \cite[Theorem 6.6, p.781]{Kan} the natural homomorphism $\varphi :P_{i}\longrightarrow Aut^0(X_{P_{n-1}}(w_{i}))$ is an injective homomorphism of algebraic groups. 
		
		Now consider the following SES 
		\begin{center}
			$0\longrightarrow \mathfrak{p}_{n-1}\longrightarrow \mathfrak{g}\longrightarrow \mathfrak{g/p}_{n-1}\longrightarrow 0$	
		\end{center}
		of $B$-modules.
		
		Thus, we have the following long exact sequence 
		\begin{center}
			$0\longrightarrow H^0(w_{i},\mathfrak{p}_{n-1})\longrightarrow H^0(w_{i},\mathfrak{g})\longrightarrow H^0(w_{i}, \mathfrak{g/p}_{n-1})\longrightarrow$ 
			
			$H^1(w_{i},\mathfrak{p}_{n-1})\longrightarrow H^1(w_{i},\mathfrak{g})\longrightarrow H^1(w_{i}, \mathfrak{g/p}_{n-1})\longrightarrow\cdots $	
		\end{center}
		of $B$-modules.  
		
		Since $H^0(w_{i},\mathfrak{g})=\mathfrak{g},$ by  using Corollary \ref{cor 6.6}(iv),  we have $H^0(w_{i},\mathfrak{g/p}_{n-1})=\mathfrak{g}.$ Further, since $d\varphi(\mathfrak{p}_{i})\subseteq$Lie$(Aut^0(X_{P_{n-1}}(w_{i})))\subseteq H^0(w_{i},\mathfrak{g/p}_{n-1})=\mathfrak{g}$ and the base field  is $\mathbb{C},$ $Aut^0(X_{P_{n-1}}(w_{i}))$ is a closed subgroup of $G$ containing $P_{i}$ (see \cite[Theorem 12.5, p.85 and Theorem 13.1, p.87]{Hum2}) Hence, $Aut^0(X_{P_{n-1}}(w_{i}))$ is the stabilizer of $X_{P_{n-1}}(w_{i})$ in $G.$ Thus, we have  $Aut^0(X_{P_{n-1}}(w_{i}))=P_{i}.$

	\end{proof}

	\section{$G$ is of type $F_{4}$}
	In this section, we assume that $G$ is of type $F_{4}.$ Further, we prove that for any $1\le i\le 4,$ there exists  a Schubert variety $X_{P_{3}}(w_{i})$ in $G/P_{3}$ such that $P_{i}=Aut^0(X_{P_{3}}(w_{i})).$ 
	
	Since $G$ is of type $F_{4},$ we have $\alpha_{0}=2\alpha_{1}+3\alpha_{2}+4\alpha_{3}+2\alpha_{4}=\omega_{1}.$ Recall that by Corollary \ref{cor 2.2}, there exists a unique element $v_{2}$ of minimal length such that $v_{2}^{-1}(\alpha_{0})=-\alpha_{2}.$
	
	\begin{lemma}\label{lemma 3.1}
		Then $v_{2}$ satisfies the following:
		\begin{itemize}
			\item [(i)] $v_{2}=s_{1}s_{2}s_{3}s_{2}s_{4}s_{3}s_{1}s_{2}.$
			
			\item [(ii)] $v_{2}^{-1}(\alpha_{1})=-(\alpha_{1}+3\alpha_{2}+4\alpha_{3}+2\alpha_{4}),$ $v_{2}^{-1}(\alpha_{2})=\alpha_{2}+2\alpha_{3},$ $v_{2}^{-1}(\alpha_{3})=\alpha_{4},$ and $v_{2}^{-1}(\alpha_{4})=\alpha_{1}+\alpha_{2}+\alpha_{3}.$
		\end{itemize}
	\end{lemma}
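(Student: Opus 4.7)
The plan is to pin down $v_2$ by exhibiting an explicit reduced word that (a) has the correct minimal length and (b) satisfies the defining property $v_2^{-1}(\alpha_0)=-\alpha_2$; uniqueness from Corollary \ref{cor 2.2} will then identify it with the candidate. Once $v_2$ is fixed, part (ii) is a routine evaluation.

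First I would set $v_2' := s_1 s_2 s_3 s_2 s_4 s_3 s_1 s_2$ and compute $(v_2')^{-1}(\alpha_0)$ directly, applying the simple reflections $s_2, s_1, s_3, s_4, s_2, s_3, s_2, s_1$ to $\alpha_0 = 2\alpha_1+3\alpha_2+4\alpha_3+2\alpha_4$ in turn, tracking the coefficient vector at each step. The target result is $-\alpha_2$; this is mechanical bookkeeping in the $F_4$ root system.

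Next I would verify that $\ell(v_2')=8$, which is the predicted minimal length. By Proposition \ref{Prop 2.1}(2), since $\alpha_2$ is a long simple root in $F_4$ and $g=9$ (see Table~2), we have $\ell(u_{\alpha_2})=g-2=7$; together with the remark $v_{\alpha_2}=s_{\alpha_2}u_{\alpha_2}$, this gives $\ell(v_2)\le 8$, and equality will follow once the word $v_2'$ is shown to be reduced. To check reducedness I would either count directly that $v_2'$ sends exactly $8$ positive roots to negative ones, or exhibit $8$ distinct inversions in $R^+((v_2')^{-1})$ using the partial computations from part (ii). Combined with Corollary \ref{cor 2.2}, the equality of length together with $(v_2')^{-1}(\alpha_0)=-\alpha_2$ forces $v_2=v_2'$, giving (i).

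For (ii), I would simply apply $v_2^{-1} = s_2 s_1 s_3 s_4 s_2 s_3 s_2 s_1$ to each of $\alpha_1,\alpha_2,\alpha_3,\alpha_4$ in turn and read off the four claimed values. The only real hazard in the whole proof is arithmetic: the Cartan integers $\langle \alpha_2,\alpha_3\rangle=-2$ and $\langle \alpha_3,\alpha_2\rangle=-1$ coming from the double bond in the $F_4$ diagram (Figure 3) make it easy to confuse long and short roots and miscount a coefficient. Thus the principal obstacle is not conceptual but purely computational discipline in correctly carrying out the eight-step reflection sequences on $\alpha_0$ and on each simple root.
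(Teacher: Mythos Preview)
Your approach is correct and essentially the same as the paper's: set up the candidate word $v_2'$, verify $(v_2')^{-1}(\alpha_0)=-\alpha_2$ by direct computation, match lengths, and invoke the uniqueness in Corollary~\ref{cor 2.2}; then compute (ii) by applying $v_2^{-1}$ to each simple root. The only difference is that the paper simply asserts $\ell(v_2')=\ell(v_2)$, whereas you spell out the reason via the dual Coxeter number and Proposition~\ref{Prop 2.1}(2). One small remark: once you know $\ell(v_2)=g-1=8$ from $v_{\alpha_2}=u_{\alpha_2}s_2$ (noting $u_{\alpha_2}(\alpha_2)=\alpha_0>0$, so the length goes up), reducedness of the word $v_2'$ is automatic and need not be checked separately, since $v_2'$ already satisfies the defining property and has word length $8$.
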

	\begin{proof}
		Proof of (i):
		Let $v'_{2}=s_{1}s_{2}s_{3}s_{2}s_{4}s_{3}s_{1}s_{2}.$ Then by the usual calculation we have ${v'_{2}}^{-1}(\alpha_{0})=-\alpha_{2}.$ Since $\ell(v_{2}') =\ell(v_{2}),$ by Corollary \ref{cor 2.2}, we have $v_{2}=v'_{2}.$

		Proof of (ii): By $(i),$ we have $v_{2}^{-1}=s_{2}s_{1}s_{3}s_{4}s_{2}s_{3}s_{2}s_{1}.$ Then by the usual calculation we have $v_{2}^{-1}(\alpha_{1})=-(\alpha_{1}+3\alpha_{2}+4\alpha_{3}+2\alpha_{4}),$ $v_{2}^{-1}(\alpha_{2})=-(\alpha_{2}+2\alpha_{3}),$ $v_{2}^{-1}(\alpha_{3})=\alpha_{4},$ and $v_{2}^{-1}(\alpha_{4})=\alpha_{1}+\alpha_{2}+\alpha_{3}.$
	\end{proof}
	
	Let $x_{i}=w_{0,S\setminus\{\alpha_{1},\alpha_{i}\}}w_{0,S\setminus\{\alpha_{1}\}}=((w_{0,S\setminus\{\alpha_{1}\}})^{S\setminus\{\alpha_{1}, \alpha_{i}\}})^{-1}$ and $w_{i}=x_{i}v_{2}$ for all $1\le i\le 4.$ Note that by Lemma \ref{lemma 3.1}, we have $R^{+}(v_{2}^{-1})\subseteq\{\beta\in R^{+}:  \alpha_{1}\le \beta  \}.$ On the other hand, $R^{+}(x_{i})\subseteq \mathbb{Z}_{\ge 0}(S\setminus\{\alpha_{1}\})\cap R^{+}$ for all $1\le i\le 4.$ Therefore, $R^{+}(v_{2}^{-1})\cap R^{+}(x_{i})=\emptyset.$ Hence, $\ell(w_{i})=\ell(x_{i})+\ell(v_{2})$ for all $1\le i\le 4.$ 
	\begin{lemma}\label{lemma 7.2}
		\item[(i)] $v_{2}(\alpha_{3})=\alpha_{1}+2\alpha_{2}+2\alpha_{3}+\alpha_{4}.$	
		
		\item[(ii)] $w_{0, S\setminus \{\alpha_{1}\}}v_{2}(\alpha_{3})=\alpha_{1}+\alpha_{2}+2\alpha_{3}+\alpha_{4}.$
		
		\item[(iii)] $w_{i}(\alpha_{3})$ is a non-simple positive root for $1\le i\le 4.$
	\end{lemma}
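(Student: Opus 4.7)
The plan is to split the three parts of the lemma using different techniques that reinforce one another: part (i) by direct computation along the given reduced expression of $v_2$, part (ii) by a short algebraic manipulation using Lemma~\ref{Lemma 2.1} together with the fact that the longest element of $W(C_3)$ acts as $-1$, and part (iii) by a support argument built on the output of (ii).

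For (i), I take the reduced expression $v_2 = s_1 s_2 s_3 s_2 s_4 s_3 s_1 s_2$ from Lemma~\ref{lemma 3.1}(i) and apply simple reflections to $\alpha_3$ one at a time from right to left. The bookkeeping requires the correct $F_4$ Cartan integers, most importantly $\langle \alpha_2, \alpha_3\rangle = -2$ while $\langle \alpha_3, \alpha_2\rangle = -1$; the chain of images should terminate at $\alpha_1 + 2\alpha_2 + 2\alpha_3 + \alpha_4$.

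Given (i), I decompose the action in (ii) as follows. By Lemma~\ref{Lemma 2.1} (Case III), $w_{0, S\setminus\{\alpha_1\}}(\alpha_1) = \alpha_0 - \alpha_1 = \alpha_1 + 3\alpha_2 + 4\alpha_3 + 2\alpha_4$. The sub-root system on $\{\alpha_2, \alpha_3, \alpha_4\}$ has type $C_3$, whose longest element acts as $-1$ on the linear span; therefore $w_{0, S\setminus\{\alpha_1\}}$ sends every vector in $\mathbb{R}\langle \alpha_2, \alpha_3, \alpha_4\rangle$ to its negative. Combining these two observations linearly,
\[
w_{0, S\setminus\{\alpha_1\}}(\alpha_1 + 2\alpha_2 + 2\alpha_3 + \alpha_4) = (\alpha_1 + 3\alpha_2 + 4\alpha_3 + 2\alpha_4) - (2\alpha_2 + 2\alpha_3 + \alpha_4) = \alpha_1 + \alpha_2 + 2\alpha_3 + \alpha_4.
\]

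For (iii), I exploit the fact that the output of (ii) has every simple root of $F_4$ in its support. For $i = 1$, the factor $x_1 = w_{0, S\setminus\{\alpha_1\}}^2 = e$ (since longest elements are involutions), so $w_1 = v_2$ and $w_1(\alpha_3) = \alpha_1 + 2\alpha_2 + 2\alpha_3 + \alpha_4$ by (i), which is manifestly a non-simple positive root. For $i \in \{2, 3, 4\}$, $w_i(\alpha_3) = w_{0, S\setminus\{\alpha_1, \alpha_i\}}(\alpha_1 + \alpha_2 + 2\alpha_3 + \alpha_4)$; since $w_{0, S\setminus\{\alpha_1, \alpha_i\}}$ lies in $W_{S\setminus\{\alpha_1, \alpha_i\}}$, its image differs from its input by an element of $\mathbb{Z}\langle S\setminus\{\alpha_1, \alpha_i\}\rangle$, so the coefficients of $\alpha_1$ and $\alpha_i$ (both strictly positive in $\alpha_1 + \alpha_2 + 2\alpha_3 + \alpha_4$) are preserved. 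A root with two strictly positive simple-root coefficients is automatically positive and non-simple. The only real obstacle is the care required in (i) and the correct identification of the $C_3$ subsystem in (ii); (iii) is then immediate.
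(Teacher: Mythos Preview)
Your proposal is correct and follows essentially the same route as the paper. For (i) the paper simply says ``by the usual calculation''; for (ii) it uses exactly the two facts you single out, namely $w_{0,S\setminus\{\alpha_1\}}(\alpha_j)=-\alpha_j$ for $j\neq 1$ and $w_{0,S\setminus\{\alpha_1\}}(\alpha_1)=\alpha_0-\alpha_1$ (your identification of the $\{\alpha_2,\alpha_3,\alpha_4\}$ subsystem as $C_3$ is a correct way to justify the first of these); and for (iii) the paper argues, just as you do, that $w_1=v_2$ and that for $i\ge 2$ the full support of $\alpha_1+\alpha_2+2\alpha_3+\alpha_4$ forces $w_{0,S\setminus\{\alpha_1,\alpha_i\}}$ to leave the $\alpha_1$- and $\alpha_i$-coefficients positive.
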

	\begin{proof}
		Proof of (i): By the usual calculation we have $v_{2}(\alpha_{3})=\alpha_{1}+2\alpha_{2}+2\alpha_{3}+\alpha_{4}.$
		
		Proof of (ii): Since $w_{0, S\setminus\{\alpha_{1}\}}(\alpha_{j})=-\alpha_{j}$ for $j\neq 1,$ $w_{0, S\setminus \{\alpha_{1}\}}(\alpha_{1})=\alpha_{0}-\alpha_{1},$ we have $w_{0,S\setminus\{\alpha_{1}\}}v_{2}(\alpha_{3})=\alpha_{0}-(\alpha_{1}+2\alpha_{2}+2\alpha_{3}+\alpha_{4})=\alpha_{1}+\alpha_{2}+2\alpha_{3}+\alpha_{4}.$

		Proof of (iii): For $i=1,$ $w_{1}=v_{2}.$ Therefore, it follows from $(i).$  By $(ii)$ we have $w_{0,S\setminus\{\alpha_{1}\}}v_{2}(\alpha_{3})=\alpha_{1}+\alpha_{2}+2\alpha_{3}+\alpha_{4}.$ Since support of $w_{0,S\setminus\{\alpha_{1}\}}v_{2}(\alpha_{3})$ is $S,$ $w_{i}(\alpha_{3})$ is a non-simple positive root.
	\end{proof}
	\begin{lemma}\label{lemma 7.3}
		$w_{i}^{-1}(\alpha_{i})$ is a negative root and
		$w_{i}^{-1}(\alpha_{j})$ is a positive root for all $1\le i, j\le 4$ such that $i\neq j.$  
	\end{lemma}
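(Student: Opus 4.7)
The plan is to mimic closely the strategy already used in the $B_n$ and $C_n$ cases (Lemmas \ref{lem5.3} and \ref{lemma 6.3}), with the explicit data of Lemma \ref{lemma 3.1} plugged in for $v_2$ and the general formula for $w_{0,S\setminus\{\alpha_1\}}$ from Lemma \ref{Lemma 2.1} plugged in for $x_i$. The factorization $w_i = x_i v_2$ with $\ell(w_i)=\ell(x_i)+\ell(v_2)$ (which holds because $R^+(v_2^{-1})$ consists of roots whose support contains $\alpha_1$, while $R^+(x_i)$ consists of roots supported in $S\setminus\{\alpha_1\}$) will be invoked throughout.

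First I would handle part (i). Since $x_i^{-1}=(w_{0,S\setminus\{\alpha_1\}})^{S\setminus\{\alpha_1,\alpha_i\}}$ is the minimal length coset representative, standard properties of parabolic Weyl groups give $x_i^{-1}(\alpha_i)\in -R^+$; combined with the length-additivity of $w_i=x_iv_2$, this forces $w_i^{-1}(\alpha_i)=v_2^{-1}(x_i^{-1}(\alpha_i))$ to remain a negative root.

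Next I would treat part (ii) in three sub-cases. For $i=1$ we have $w_1=v_2$, and Lemma \ref{lemma 3.1}(ii) directly shows $v_2^{-1}(\alpha_j)>0$ for every $j\neq 1$. For $2\le i\le 4$ and $j\notin\{1,i\}$, the element $x_i^{-1}(\alpha_j)$ is a positive root supported in $S\setminus\{\alpha_1\}$; since $R^+(v_2^{-1})\subseteq\{\beta\in R^+:\alpha_1\le\beta\}$, such a root is sent by $v_2^{-1}$ again into $R^+$. The remaining case is $j=1$ and $2\le i\le 4$: using Lemma \ref{lem 2.3} one writes $w_{0,S\setminus\{\alpha_1,\alpha_i\}}(\alpha_1)$ as an explicit positive root in $\mathbb{Z}_{\ge 0}(S\setminus\{\alpha_1\})$, then applies $w_{0,S\setminus\{\alpha_1\}}$ (which acts as $-1$ on $S\setminus\{\alpha_1\}$ and sends $\alpha_1$ to $\alpha_0-\alpha_1$ by Lemma \ref{Lemma 2.1}) to obtain $x_i^{-1}(\alpha_1)=\alpha_0-\gamma_i$ for some positive combination $\gamma_i$ supported in $S\setminus\{\alpha_1\}$; finally $v_2^{-1}$ of this expression is computed from Lemma \ref{lemma 3.1}(ii) and checked to be a positive root in each of the three cases $i=2,3,4$.

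The only step requiring real bookkeeping is the last one: computing $v_2^{-1}(\alpha_0-\gamma_i)$ explicitly for $i=2,3,4$ using Lemma \ref{lemma 3.1}(ii), verifying in each case that the outcome lies in $R^+$. I expect this to be the main obstacle, but it is a finite and routine verification once the expressions for $w_{0,S\setminus\{\alpha_1,\alpha_i\}}(\alpha_1)$ are written down. All other steps are structurally identical to Lemma \ref{lem5.3} and Lemma \ref{lemma 6.3}.
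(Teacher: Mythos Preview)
Your proposal is correct and follows essentially the same route as the paper's proof: the paper also splits into the cases $i=1$ (handled directly by Lemma~\ref{lemma 3.1}(ii)), then $j\neq 1,i$ (handled via the support argument you describe), and finally $j=1$ with $i\in\{2,3,4\}$, each treated by computing $w_{0,S\setminus\{\alpha_1,\alpha_i\}}(\alpha_1)$, then $x_i^{-1}(\alpha_1)$ via Lemma~\ref{Lemma 2.1}, and finally applying $v_2^{-1}$ using Lemma~\ref{lemma 3.1}(ii). Two small slips to fix when you write it out: $w_{0,S\setminus\{\alpha_1,\alpha_i\}}(\alpha_1)$ is \emph{not} supported in $S\setminus\{\alpha_1\}$ (it has $\alpha_1$-coefficient~$1$), and correspondingly your $\gamma_i$ contains $\alpha_1$ in its support; neither affects the argument.
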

	\begin{proof}
		First we note that for $i=1,$ we have $w_{1}=v_{2}.$ Therefore, for $i=1,$ statement follows from Lemma \ref{lemma 3.1}(ii).
		
		On the other hand, for $i\neq 1,$  $x_{i}^{-1}(\alpha_{i})=((w_{0, S\setminus \{\alpha_{1}\}})^{S\setminus \{\alpha_{1},\alpha_{i}\}})(\alpha_{i})$ is a negative root such that support does not contain $\alpha_{1}.$ Therefore, by Lemma \ref{lemma 3.1}(ii), $w_{i}^{-1}(\alpha_{i})=v_{2}^{-1}(x_{i}^{-1}(\alpha_{i}))$ is a negative root for $i\neq 1.$
		
		Further, we have $x_{i}^{-1}(\alpha_{j})$ is a positive root such that its support does not contain $\alpha_{1},$ for all $i\neq j$ and $j\neq 1.$  Therefore, by Lemma \ref{lemma 3.1}(ii), $w_{i}^{-1}(\alpha_{j})=v_{2}^{-1}(x_{i}^{-1}(\alpha_{j}))$ is a positive root for $i\neq j$ and $j\neq 1.$ 
		
		{\bf Claim:} If $i\neq j,$ but $j=1,$ then  $w_{i}^{-1}(\alpha_{j})$ is a positive root. 
		
		{\bf Case I:} Assume that $i=2.$

		\begin{picture}(10,0)
			\thicklines
			\put(2.2,0){\circle*{0.2}}
			\put(2,-0.5){$\alpha_{3}$}
			\put(2.2,0){\line(1,0){1}}
			\put(3.2,0){\circle*{0.2}}
			\put(3.1,-0.5){$\alpha_{4}$}
			\put(0,-1.5){Figure 5: Dynkin subdiagram of $F_{4}$ removing the nodes $\alpha_{1},\alpha_{2}.$}
			\put(0,-1.9){}
		\end{picture}
		\vspace{2cm}
		
		Then
		$w_{0,S\setminus\{\alpha_{1},\alpha_{2}\}}(\alpha_{1})=s_{3}s_{4}s_{3}(\alpha_{1})=\alpha_{1}.$ Further, by Lemma \ref{Lemma 2.1}, $w_{0, S\setminus\{\alpha_{1}\}}(\alpha_{1})=\alpha_{0}-\alpha_{1}.$ Moreover, by using Lemma \ref{lemma 3.1}, we have $v^{-1}_{2}(\alpha_{0}-\alpha_{1})=-\alpha_{2}+(\alpha_{1}+3\alpha_{2}+4\alpha_{3}+2\alpha_{4})=\alpha_{1}+2\alpha_{2}+4\alpha_{3}+2\alpha_{4}.$ Therefore, $w_{2}^{-1}(\alpha_{1})$ is a positive root.
		
		{\bf Case II:} Assume that $i=3.$
		
		\begin{picture}(10,0)
			\thicklines
			\put(2,-0.5){$\alpha_{2}$}
			\put(2.2,0){\circle*{0.2}}
			\put(3.2,0){\circle*{0.2}}
			\put(3.1,-0.5){$\alpha_{4}$}
			\put(0,-1.5){Figure 6: Dynkin subdiagram of $F_{4}$ removing the nodes $\alpha_{1},\alpha_{3}.$}
			\put(0,-1.9){}
		\end{picture}
		\vspace{2cm}
		
		Then
		$w_{0,S\setminus\{\alpha_{1},\alpha_{3}\}}(\alpha_{1})=s_{2}s_{4}(\alpha_{1})=\alpha_{1}+\alpha_{2}.$ Further, since $w_{0,S\setminus\{\alpha_{1}\}}(\alpha_{2})=-\alpha_{2},$ and by Lemma \ref {Lemma 2.1}, $w_{0, S\setminus\{\alpha_{1}\}}(\alpha_{1})=\alpha_{0}-\alpha_{1},$ we have $w_{0,S\setminus\{\alpha_{1}\}}(\alpha_{1}+\alpha_{2})=\alpha_{0}-\alpha_{1}-\alpha_{2}.$ Moreover, by using Lemma \ref{lemma 3.1},  $v^{-1}_{2}(\alpha_{0}-\alpha_{1}-\alpha_{2})=-\alpha_{2}+(\alpha_{1}+3\alpha_{2}+4\alpha_{3}+2\alpha_{4})-(\alpha_{2}+2\alpha_{3})=\alpha_{1}+\alpha_{2}+2\alpha_{3}+2\alpha_{4}.$ Therefore, $w_{3}^{-1}(\alpha_{1})$ is a positive root.
		
		{\bf Case: III} Assume that $i=4.$
		
		\begin{picture}(10,0)
			\thicklines
			\put(1.2,0){\circle*{0.2}}
			\put(1.2,-0.1){\line(1,0){1}}
			\put(1,-0.5){$\alpha_{2}$}
			\put(1.2,0.1){\line(1,0){1}}
			\put(1.5,0.2){\line(2,-1){.4}}
			\put(1.5,-0.2){\line(2,1){.4}}
			\put(2.2,0){\circle*{0.2}}
			\put(2,-0.5){$\alpha_{3}$}
			\put(0,-1.5){Figure 7: Dynkin subdiagram of $F_{4}$ removing the nodes $\alpha_{1},$ $\alpha_{4}.$}
			\put(0,-1.9){}
		\end{picture}
		\vspace{2cm}

		Then
		$w_{0,S\setminus\{\alpha_{1},\alpha_{4}\}}(\alpha_{1})=s_{2}s_{3}s_{2}s_{3}(\alpha_{1})=\alpha_{1}+2\alpha_{2}+2\alpha_{3}.$ Further, since $w_{0,S\setminus\{\alpha_{1}\}}(\alpha_{i})=-\alpha_{i}$ for $i=2,3,$ and by Lemma \ref{Lemma 2.1}, $w_{0, S\setminus\{\alpha_{1}\}}(\alpha_{1})=\alpha_{0}-\alpha_{1},$ we have $w_{0,S\setminus\{\alpha_{1}\}}(\alpha_{1}+2\alpha_{2}+2\alpha_{3})=\alpha_{0}-\alpha_{1}-2\alpha_{2}-2\alpha_{3}.$ Moreover, by using Lemma \ref{lemma 3.1}, $v^{-1}_{2}(\alpha_{0}-\alpha_{1}-2\alpha_{2}-2\alpha_{3})=-\alpha_{2}+(\alpha_{1}+3\alpha_{2}+4\alpha_{3}+2\alpha_{4})-2(\alpha_{2}+2\alpha_{3})-2\alpha_{4}=\alpha_{1}.$ Therefore, $w_{3}^{-1}(\alpha_{1})$ is a positive root.
		
		Thus, combining the above discussion proof of the lemma follows.
	\end{proof}
	
	We note that by \cite[Lemma 6.2, p.779]{Kan}, $H^j(v_{2}, \mathfrak{b})=0$  for $j\ge 2.$ Now we show that $H^j(v_{2}, \mathfrak{b})=0$ for $j=0,1.$ To proceed further, we need to understand $\mathfrak{b}.$ Recall that $F_{4}$ has 24 positive roots. In the following table, we write down explicitly all the positive roots of $F_{4},$ which helps to prove $H^j(v_{2}, \mathfrak{b})=0$ for $j=0,1.$   
	\begin{center}
		\begin{tabular}{ |p{1.3cm}|p{10.3cm}|p{3.5cm}| }
			\hline
			\multicolumn{2}{|c|}{ Positive roots of $F_{4}$} \\
			\hline
			
			Height&Positive roots  \\
			\hline
			$1$ &  $\alpha_{1},~\alpha_{2},~\alpha_{3},~\alpha_{4}~$  \\
			
			\hline	
			2 &
			$\alpha_{1}+\alpha_{2},~\alpha_{2}+\alpha_{3},~\alpha_{3}+\alpha_{4}$ \\
			\hline
			3&  $\alpha_{1}+\alpha_{2}+\alpha_{3},~\alpha_{2}+2\alpha_{3},~\alpha_{2}+\alpha_{3}+\alpha_{4}$     \\
			\hline	
			4&  $\alpha_{1}+\alpha_{2}+\alpha_{3}+\alpha_{4},~\alpha_{1}+\alpha_{2}+2\alpha_{3},~\alpha_{2}+2\alpha_{3}+\alpha_{4}$ \\
			\hline	
			5&	$\alpha_{1}+\alpha_{2}+2\alpha_{3}+\alpha_{4},~\alpha_{1}+2\alpha_{2}+2\alpha_{3},~\alpha_{2}+2\alpha_{3}+2\alpha_{4}$\\
			
			\hline
			
			6 & $\alpha_{1}+2\alpha_{2}+2\alpha_{3}+\alpha_{4},~\alpha_{1}+\alpha_{2}+2\alpha_{3}+2\alpha_{4}$\\
			\hline

			7& $\alpha_{1}+2\alpha_{2}+2\alpha_{3}+2\alpha_{4},~\alpha_{1}+2\alpha_{2}+3\alpha_{3}+\alpha_{4}$\\
			
			\hline
			
			8& $\alpha_{1}+2\alpha_{2}+3\alpha_{3}+2\alpha_{4}$\\
			
			\hline
			
			9& $\alpha_{1}+2\alpha_{2}+4\alpha_{3}+2\alpha_{4}$\\
			
			\hline
			
			10& $\alpha_{1}+3\alpha_{2}+4\alpha_{3}+2\alpha_{4}$\\
			
			\hline
			
			11& $2\alpha_{1}+3\alpha_{2}+4\alpha_{3}+2\alpha_{4}$\\
			
			\hline
		\end{tabular}
		
		\begin{center}
			\bf{Table: Positive roots of $F_{4}.$}
		\end{center}
	\end{center}

	Recall that $v_{2}=s_{1}s_{2}s_{3}s_{2}s_{4}s_{3}s_{1}s_{2}.$ Now we prove the following lemma
	\begin{lemma}\label{Lemma 7.4} Then we have
		$H^{1}(v_{2}, \mathfrak{b})=0.$ 	
	\end{lemma}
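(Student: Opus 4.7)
The plan is to prove $H^1(v_2,\mathfrak{b})=0$ by induction along the right-initial subwords of the reduced expression $v_2 = s_1 s_2 s_3 s_2 s_4 s_3 s_1 s_2$, repeatedly applying the Demazure short exact sequence
\begin{equation*}
0 \longrightarrow H^1(s_\alpha, H^0(\tau, \mathfrak{b})) \longrightarrow H^1(s_\alpha \tau, \mathfrak{b}) \longrightarrow H^0(s_\alpha, H^1(\tau, \mathfrak{b})) \longrightarrow 0
\end{equation*}
valid for $\ell(s_\alpha \tau) = \ell(\tau)+1$. By the inductive hypothesis the right term vanishes, so at each stage the task is reduced to showing $H^1(s_\alpha, H^0(\tau, \mathfrak{b})) = 0$.

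The base case $\tau = s_2$ is immediate: since $\alpha_2$ is a long simple root, every weight $\mu$ of $\mathfrak{b}$ distinct from $-\alpha_2$ satisfies $\langle \mu, \alpha_2\rangle \in \{-1,0,1\}$, and Lemma~\ref{lem 5.4} packages $\mathbb{C}h(\alpha_2)\oplus \mathbb{C}_{-\alpha_2}$ as the standard two-dimensional $\hat{L}_{\alpha_2}$-summand twisted by $\mathbb{C}_{-\omega_2}$, so Lemma~\ref{lem 3.1}(3) gives $H^1(s_2,\mathfrak{b})=0$. For the inductive step, I would decompose $H^0(\tau,\mathfrak{b})$ as a direct sum of indecomposable $B_\alpha$-modules $V'\otimes\mathbb{C}_\lambda$ (using Lemma~\ref{lem 3.2}); by Lemma~\ref{lem 3.1}, a summand contributes to $H^1(s_\alpha,\cdot)$ only when $\langle \lambda, \alpha\rangle \le -2$, so the entire game is to verify no such summand ever arises.

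For $\alpha$ long (\textit{viz.} $\alpha_1,\alpha_2$ in $F_4$), the only negative-root weight $\mu$ of $\mathfrak{b}$ with $\langle \mu,\alpha\rangle = -2$ is $\mu = -\alpha$, which Lemma~\ref{lem 5.4} handles as above. For $\alpha$ short ($\alpha_3,\alpha_4$), the bad weights $\mu$ with $\langle \mu,\alpha\rangle = -2$ are precisely the negatives of long roots with a specific geometric relation to $\alpha$ (readable off the $F_4$ positive-root table above). In each such occurrence I would identify the companion weight $\mu+\alpha$ in $H^0(\tau,\mathfrak{b})$ — either by noting it was inherited from $\mathfrak{b}$ itself, or by using Lemma~\ref{lem 3.1}(1) at an earlier step — and verify that the pair $\mathbb{C}_\mu \oplus \mathbb{C}_{\mu+\alpha}$ assembles into a single indecomposable $V\otimes \mathbb{C}_\lambda$, where $V$ is the standard $\hat{L}_\alpha$-module and $\langle \lambda,\alpha\rangle = -1$; Lemma~\ref{lem 3.1}(3) then kills its contribution to $H^1$.

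The main obstacle is the explicit bookkeeping: at each of the seven intermediate stages one must maintain a complete list of the weight multiplicities of $H^0(\tau,\mathfrak{b})$ and verify the pairing property for every potentially troublesome long root. The explicit table of $24$ positive roots of $F_4$ makes this a finite, if tedious, verification, and one can lighten it by choosing a reduced expression for $v_2$ that postpones reflections in the short roots $s_3, s_4$ (so that the torus and long-root weights have time to propagate via Lemma~\ref{lem 3.1}(1) before they are tested against a short root), thereby ensuring that the requisite companions $\mu+\alpha$ are already present when the short reflections are applied.
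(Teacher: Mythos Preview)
Your proposal is correct and follows essentially the same route as the paper's proof: both argue by induction along the right-initial subwords of $v_2 = s_1 s_2 s_3 s_2 s_4 s_3 s_1 s_2$, using the SES and Lemmas~\ref{lem 3.1}, \ref{lem 3.2} to reduce each step to checking that every indecomposable $B_\alpha$-summand of $H^0(\tau,\mathfrak{b})$ has $\langle\lambda,\alpha\rangle \ge -1$. The paper simply carries out the explicit bookkeeping you describe as the ``main obstacle'': it writes down $H^0(\tau,\mathfrak{b})$ at each of the eight stages and exhibits the requisite two-dimensional pairings (e.g.\ $\mathfrak{g}_{-(\alpha_2+\alpha_3)}\oplus\mathfrak{g}_{-(\alpha_2+2\alpha_3)} = V\otimes\mathbb{C}_{-\omega_3}$) directly from the $F_4$ root table, rather than reordering the reduced expression as you suggest in your final paragraph.
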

	\begin{proof}
		Note that $\mathfrak{b}=\bigoplus\limits_{i=1}^{4}\mathbb{C}h(\alpha_{i})\bigoplus\limits_{\beta \in R^{+}}^{} \mathfrak{g}_{-\beta}.$
		
		Since $\mathbb{C}h(\alpha_{2})\oplus \mathfrak{g}_{-\alpha_{2}}=V\otimes \mathbb{C}_{-\omega_{2}},$ where $V$ is the standard two dimensional irreducible $\hat{L}_{\alpha_{2}}$-module, by Lemma \ref{lem 3.1} we have $H^i(\hat{L}_{\alpha_{2}}/\hat{B}_{\alpha_{2}}, \mathbb{C}h(\alpha_{2})\oplus \mathfrak{g}_{-\alpha_{2}} )=0$ for $i=0,1.$ Further, note that $s_{2}(R^{+}\setminus\{\alpha_{2}\})= R^{+}\setminus\{\alpha_{2}\}.$ Therefore, by using SES we have $H^0(s_{2},\mathfrak{b})=\mathbb{C}h(\alpha_{1})\oplus \mathbb{C}h(\alpha_{3})\oplus \mathbb{C}h(\alpha_{4})\oplus(\mathfrak{g}_{-\alpha_{1}}\oplus \mathfrak{g}_{-\alpha_{3}}\oplus \mathfrak{g}_{-\alpha_{4}})\oplus \bigoplus\limits_{ht(\beta)\ge 2 }^{} \mathfrak{g}_{-\beta}$
		and $H^1(s_{2},\mathfrak{b})=0.$
		
		Since $\mathbb{C}h(\alpha_{1})\oplus \mathfrak{g}_{-\alpha_{1}}=V\otimes \mathbb{C}_{-\omega_{1}},$ where $V$ is the standard two dimensional $\hat{L_{\alpha_{1}}},$ by Lemma \ref{lem 3.1} we have $H^i(\hat{L}_{\alpha_{1}}/\hat{B}_{\alpha_{1}}, \mathbb{C}h(\alpha_{1})\oplus \mathfrak{g}_{-\alpha_{1}} )=0$ for $i=0,1.$ Further, the indecomposable $B_{\alpha_{1}}$-summand $V$ of $H^0(s_{2}, \mathfrak{b})$ containing $\mathfrak{g}_{-(\alpha_{1}+\alpha_{2})}$ is one dimensional and we have $\langle -(\alpha_{1}+\alpha_{2}), \alpha_{1}\rangle=-1.$ Hence, by Lemma \ref{lem 3.1} we have $H^i(s_{1}, V)=0$ for $i=0,1.$  Further, note that $s_{1}(R^{+}\setminus\{\alpha_{1},\alpha_{2}, \alpha_{1}+\alpha_{2}\})= R^{+}\setminus\{\alpha_{1},\alpha_{2}, \alpha_{1}+\alpha_{2}\}.$ Thus, by using SES and Lemma \ref{lem 3.1}, we have $H^0(s_{1}s_{2},\mathfrak{b})= \mathbb{C}h(\alpha_{3})\oplus \mathbb{C}h(\alpha_{4})\oplus (\mathfrak{g}_{-\alpha_{3}}\oplus \mathfrak{g}_{-\alpha_{4}})\oplus  (\mathfrak{g}_{-(\alpha_{2}+\alpha_{3})}\oplus \mathfrak{g}_{-(\alpha_{3}+\alpha_{4})})\oplus \bigoplus\limits_{ht(\beta) \ge 3}^{} \mathfrak{g}_{-\beta}$ and $H^1(s_{1}s_{2}, \mathfrak{b})=0.$

		Note that $\mathbb{C}h(\alpha_{3})\oplus \mathfrak{g}_{-\alpha_{3}}=V\otimes \mathbb{C}_{-\omega_{3}},$ $\mathfrak{g}_{-(\alpha_{2}+\alpha_{3})}\oplus \mathfrak{g}_{-(\alpha_{2}+2\alpha_{3})}=V\otimes \mathbb{C}_{-\omega_{3}},$ $\mathfrak{g}_{-(\alpha_{1}+\alpha_{2}+\alpha_{3})}\oplus \mathfrak{g}_{-(\alpha_{1}+\alpha_{2}+2\alpha_{3})}=V\otimes \mathbb{C}_{-\omega_{3}},$  where $V$ is the standard two dimensional $\hat{L_{\alpha_{3}}}$-module. Further, $s_{3}$ permutes $R^{+}\setminus\{\alpha_{1},\alpha_{2},\alpha_{3}, \alpha_{1}+\alpha_{2},\alpha_{2}+\alpha_{3}, \alpha_{2}+2\alpha_{3},\alpha_{1}+\alpha_{2}+\alpha_{3},\alpha_{1}+\alpha_{2}+2\alpha_{3}\}.$ Therefore, by using SES and Lemma \ref{lem 3.1},  we have $H^0(s_{3}s_{1}s_{2},\mathfrak{b})=\mathbb{C}h(\alpha_{4})\oplus  \mathfrak{g}_{-\alpha_{4}}\oplus  \mathfrak{g}_{-(\alpha_{3}+\alpha_{4})}\oplus \mathfrak{g}_{-(\alpha_{2}+\alpha_{3}+\alpha_{4})}\oplus (\mathfrak{g}_{-(\alpha_{1}+\alpha_{2}+\alpha_{3}+\alpha_{4})}\oplus \mathfrak{g}_{-(\alpha_{2}+2\alpha_{3}+\alpha_{4})})\oplus \bigoplus\limits_{ht(\beta) \ge 5}^{} \mathfrak{g}_{-\beta}$
		and $H^1(s_{3}s_{1}s_{2}, \mathfrak{b})=0.$

		Note that $\mathbb{C}h(\alpha_{4})\oplus \mathfrak{g}_{-\alpha_{4}}=V\otimes \mathbb{C}_{-\omega_{4}},$ $\langle -(\alpha_{3}+\alpha_{4}),\alpha_{4} \rangle =-1,$ $\langle -(\alpha_{2}+\alpha_{3}+\alpha_{4}),\alpha_{4} \rangle =-1,$ $\langle -(\alpha_{1}+\alpha_{2}+\alpha_{3}+\alpha_{4}),\alpha_{4} \rangle =-1,$ $\mathfrak{g}_{-(\alpha_{2}+2\alpha_{3}+\alpha_{4})}\oplus \mathfrak{g}_{-(\alpha_{2}+2\alpha_{3}+2\alpha_{4})}=V\otimes \mathbb{C}_{-\omega_{4}},$ $\mathfrak{g}_{-(\alpha_{1}+\alpha_{2}+2\alpha_{3}+\alpha_{4})}\oplus \mathfrak{g}_{-(\alpha_{1}+\alpha_{2}+2\alpha_{3}+2\alpha_{4})}=V\otimes \mathbb{C}_{-\omega_{4}},$ where $V$ is the standard two dimensional irreducible $\hat{L_{\alpha_{4}}}$-module. Further, $s_{4}$ permutes  
		$\{ \alpha_{1}+2\alpha_{2}+2\alpha_{3},\alpha_{1}+2\alpha_{2}+2\alpha_{3}+\alpha_{4},\alpha_{1}+2\alpha_{2}+2\alpha_{3}+2\alpha_{4},\alpha_{1}+2\alpha_{2}+3\alpha_{3}+\alpha_{4}, \alpha_{1}+2\alpha_{2}+3\alpha_{3}+2\alpha_{4}, \alpha_{1}+2\alpha_{2}+4\alpha_{3}+2\alpha_{4}, \alpha_{1}+3\alpha_{2}+4\alpha_{3}+2\alpha_{4}, 2\alpha_{1}+3\alpha_{2}+4\alpha_{3}+2\alpha_{4}\}.$

		Therefore, by using SES and Lemma \ref{lem 3.1}, we have

		$H^0(s_{4}s_{3}s_{1}s_{2},\mathfrak{b})= \mathfrak{g}_{-(\alpha_{1}+2\alpha_{2}+2\alpha_{3})}\oplus \mathfrak{g}_{-(\alpha_{1}+2\alpha_{2}+2\alpha_{3}+\alpha_{4})}\oplus \mathfrak{g}_{-(\alpha_{1}+2\alpha_{2}+2\alpha_{3}+2\alpha_{4})}\\\oplus\mathfrak{g}_{-(\alpha_{1}+2\alpha_{2}+3\alpha_{3}+\alpha_{4})} \bigoplus\limits_{ht(\beta) \ge 8}^{} \mathfrak{g}_{-\beta}$ 
		
		and $H^1(s_{4}s_{3}s_{1}s_{2}, \mathfrak{b})=0.$

		Note that $\langle -(\alpha_{1}+2\alpha_{2}+2\alpha_{3}), \alpha_{2} \rangle =-1,$ $\langle -(\alpha_{1}+2\alpha_{2}+2\alpha_{3}+\alpha_{4}), \alpha_{2} \rangle =-1,$ $\langle -(\alpha_{1}+2\alpha_{2}+2\alpha_{3}+2\alpha_{4}), \alpha_{2} \rangle =-1,$ $s_{2}(\alpha_{1}+2\alpha_{2}+3\alpha_{3}+\alpha_{4})= \alpha_{1}+2\alpha_{2}+3\alpha_{3}+\alpha_{4},$ and $s_{2}$ permutes all positive roots whose heights are greater than or equal to $8.$
		Therefore, by using SES and Lemma \ref{lem 3.1}, we have 
		$H^0(s_{2}s_{4}s_{3}s_{1}s_{2},\mathfrak{b})= \mathfrak{g}_{-(\alpha_{1}+2\alpha_{2}+3\alpha_{3}+\alpha_{4})} \bigoplus\limits_{ht(\beta) \ge 8}^{} \mathfrak{g}_{-\beta}$
		and $H^1(s_{2}s_{4}s_{3}s_{1}s_{2}, \mathfrak{b})=0.$

		Note that $\langle -(\alpha_{1}+2\alpha_{2}+3\alpha_{3}+\alpha_{4}),\alpha_{3} \rangle =-1,$ $\mathfrak{g}_{-(\alpha_{1}+2\alpha_{2}+3\alpha_{3}+2\alpha_{4})}\oplus \mathfrak{g}_{-(\alpha_{1}+2\alpha_{2}+4\alpha_{3}+2\alpha_{4})}=V\otimes \mathbb{C}_{-\omega_{3}},$ where $V$ is the standard two dimensional $\hat{L_{\alpha_{3}}}$-module. Further, $s_{3}$ permutes all the positive roots whose heights are greater than or equal to $10.$
		
		Therefore, by using SES and Lemma \ref{lem 3.1},  we have 
		$H^0(s_{3}s_{2}s_{4}s_{3}s_{1}s_{2},\mathfrak{b}) = \bigoplus\limits_{ht(\beta) \ge 10}^{} \mathfrak{g}_{-\beta}$
		and $H^1(s_{3}s_{2}s_{4}s_{3}s_{1}s_{2}, \mathfrak{b})=0.$
		
		Since $\langle -(\alpha_{1}+3\alpha_{2}+4\alpha_{3}+2\alpha_{4}),\alpha_{2} \rangle =-1,$ and  $\langle -(2\alpha_{1}+3\alpha_{2}+4\alpha_{3}+2\alpha_{4}),\alpha_{2} \rangle =0,$ by using SES and Lemma \ref{lem 3.1}, we have
		$H^0(s_{2}s_{3}s_{2}s_{4}s_{3}s_{1}s_{2},\mathfrak{b})=  \mathfrak{g}_{-(2\alpha_{1}+3\alpha_{2}+4\alpha_{3}+2\alpha_{4})}$ and $H^1(s_{2}s_{3}s_{2}s_{4}s_{3}s_{1}s_{2}, \mathfrak{b})=0.$
		
		Since $\langle-(2\alpha_{1}+3\alpha_{2}+4\alpha_{3}+2\alpha_{4}),\alpha_{1} \rangle =-1,$ by using SES and Lemma \ref{lem 3.1}, we have \\
		$H^0(s_{1}s_{2}s_{3}s_{2}s_{4}s_{3}s_{1}s_{2},\mathfrak{b})=0$
		and $H^1(s_{1}s_{2}s_{3}s_{2}s_{4}s_{3}s_{1}s_{2}, \mathfrak{b})=0.$
	\end{proof}

	\begin{corollary}\label{cor 7.5}We have the following
		\begin{itemize}
			\item[(i)] $H^{j}(v_{2}, \mathfrak{b})=0$ for all $j=0,1.$
			
			\item[(ii)] $H^{j}(v_{2}, \alpha_{3})=0$ for all $j=0,1.$
			
			\item[(iii)] $H^{j}(v_{2}, \mathfrak{p}_{3})=0$ for all $j=0,1.$
			
			\item[(iv)] $H^{j}(w_{i}, \mathfrak{p}_{3})=0$ for $j=0,1$ and $1\le i\le 4.$
		\end{itemize}
	\end{corollary}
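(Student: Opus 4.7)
The plan is to follow exactly the template used for Corollary \ref{cor5.7} and Corollary \ref{cor 6.6}, since the structure of the four statements is identical; the only type-dependent input (Lemma \ref{Lemma 7.4}) has already been established.

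For part (i), the $H^1$-vanishing is immediate from Lemma \ref{Lemma 7.4}. For $H^0(v_{2},\mathfrak{b})=0$, I would use that $v_{2}^{-1}(\alpha_{0})=-\alpha_{2}<0$ by Lemma \ref{lemma 3.1}(ii), and then invoke the argument from \cite[Theorem 4.1, p.771]{Kan} (exactly as cited in the proofs of Corollary \ref{cor5.7}(i) and Corollary \ref{cor 6.6}(i)).

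Part (ii) is a one-line application of Lemma \ref{lem 2.1}(4). The reduced expression $v_{2}=s_{1}s_{2}s_{3}s_{2}s_{4}s_{3}s_{1}s_{2}$ from Lemma \ref{lemma 3.1}(i) ends in $s_{2}$, and $\langle \alpha_{3},\alpha_{2}\rangle=-1$ in type $F_{4}$ (since $\alpha_{3}$ is short, $\alpha_{2}$ is long, and they are joined by a double bond with the arrow pointing toward $\alpha_{3}$). Thus $H^{j}(v_{2},\alpha_{3})=0$ for every $j\ge 0$.

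For part (iii), I would feed (i) and (ii) into the long exact sequence associated to
\begin{equation*}
0\longrightarrow \mathfrak{b}\longrightarrow \mathfrak{p}_{3}\longrightarrow \mathbb{C}_{\alpha_{3}}\longrightarrow 0,
\end{equation*}
exactly as in the proofs of Corollary \ref{cor5.7}(iii) and Corollary \ref{cor 6.6}(iii). The vanishing of the flanking terms forces $H^{j}(v_{2},\mathfrak{p}_{3})=0$ for $j=0,1$. Finally, for part (iv), the factorization $w_{i}=x_{i}v_{2}$ with $\ell(w_{i})=\ell(x_{i})+\ell(v_{2})$ (established just before Lemma \ref{lemma 7.2}) lets us apply the standard SES/Leray argument: $H^{j}(w_{i},\mathfrak{p}_{3})$ is built from $H^{p}(x_{i},H^{q}(v_{2},\mathfrak{p}_{3}))$, and since $H^{q}(v_{2},\mathfrak{p}_{3})=0$ for $q=0,1$ by (iii) (and for $q\ge 2$ by \cite[Lemma 6.2, p.779]{Kan} applied to the SES above), we conclude $H^{j}(w_{i},\mathfrak{p}_{3})=0$ for $j=0,1$ and every $1\le i\le 4$. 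None of these steps is a real obstacle; the substantive work was the tedious indecomposable-summand bookkeeping already carried out in Lemma \ref{Lemma 7.4}, and the only thing to be careful about here is verifying that the relevant Cartan pairing used in (ii) really equals $-1$ (which it does, because $\alpha_{3}$ is short and $\alpha_{2}$ is long).
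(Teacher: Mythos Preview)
Your proposal is correct and follows essentially the same approach as the paper. The only minor deviation is in part (i): you justify $H^{0}(v_{2},\mathfrak{b})=0$ via $v_{2}^{-1}(\alpha_{0})<0$ and \cite[Theorem 4.1]{Kan} (mirroring Corollaries \ref{cor5.7} and \ref{cor 6.6}), whereas the paper simply cites Lemma \ref{Lemma 7.4}, whose proof already computed $H^{0}(v_{2},\mathfrak{b})=0$ explicitly in its final line---both justifications are valid and the rest of the argument is identical.
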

	\begin{proof}
		Proof of (i): By Lemma \ref{Lemma 7.4}, we have $H^{0}(v_{2},\mathfrak{b})=0,$ and $H^1(v_{n},\mathfrak{b})=0.$ 
		
		Proof of (ii): Since $\langle \alpha_{3},\alpha_{2}\rangle=-1,$ by Lemma \ref{lem 3.1}(3), we have $H^j(v_{2},\alpha_{3})=0$ for $j=0,1.$

		Proof of (iii):
		Consider the exact sequence
		\begin{center}
			$0\longrightarrow \mathfrak{b}\longrightarrow\mathfrak{p}_{3}\longrightarrow\mathbb{C}_{\alpha_{3}}\longrightarrow0$
		\end{center}
		of $B$-modules.
		
		Then we have the following long exact sequence 
		\begin{center}
			$0\longrightarrow H^0(v_{2},\mathfrak{b})\longrightarrow H^0(v_{2},\mathfrak{p}_{3})\longrightarrow H^0(v_{2},\alpha_{3})\longrightarrow$ 	$H^1(v_{2}, \mathfrak{b})\longrightarrow H^1(v_{2},\mathfrak{p}_{3})\longrightarrow H^1(v_{2},\alpha_{3})\longrightarrow H^2(v_{2},\mathfrak{b})\longrightarrow\cdots$
		\end{center}
		of $B$-modules.
		
		Therefore, by using $(i)$ and $(ii),$ proof of $(iii)$ follows.

		Proof of (iv): Note that $w_{i}=x_{i}v_{2}$ and $\ell(w_{i})=\ell(x_{i})+\ell(v_{2})$ for all $1\le i\le 4.$ Therefore, by (iii) and using SES proof of (iv) follows. 
	\end{proof}
	
	\begin{proposition}\label{prop 7.6}
		We have $w_{i}\in W^{P_{3}}$ and $P_{i}=Aut^0(X_{P_{3}}(w_{i}))$ for all $1\le i\le 4.$
	\end{proposition}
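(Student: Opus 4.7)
The plan is to follow the template of Propositions~\ref{prop5.8} and~\ref{prop6.7} essentially verbatim. First I would verify that $w_i \in W^{P_3}$ by invoking Lemma~\ref{lemma 7.2}(iii), which gives $w_i(\alpha_3)$ as a non-simple positive root; since $W_{P_3} = \{1, s_3\}$, the condition $w_i(\alpha_3) > 0$ is exactly what is required. Next, using Lemma~\ref{lemma 7.3}, which asserts $w_i^{-1}(\alpha_i) < 0$ while $w_i^{-1}(\alpha_j) > 0$ for $j \neq i$, I would identify $P_i$ as the full stabilizer of $X_{P_3}(w_i)$ in $G$ under left multiplication.

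To construct an injective homomorphism $\varphi : P_i \hookrightarrow Aut^0(X_{P_3}(w_i))$, I would check that $w_i^{-1}(\alpha_0) < 0$ and then invoke \cite[Theorem 6.6, p.781]{Kan}. For this, since $\alpha_0 = \omega_1$ in type $F_4$, every element of $W_{S \setminus \{\alpha_1\}}$ fixes $\alpha_0$; hence $x_i^{-1}(\alpha_0) = w_{0,S\setminus\{\alpha_1\}}\, w_{0,S\setminus\{\alpha_1,\alpha_i\}}(\alpha_0) = \alpha_0$, and therefore $w_i^{-1}(\alpha_0) = v_2^{-1}(\alpha_0) = -\alpha_2 < 0$, as required.

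For the reverse containment, I would exploit the short exact sequence
\[
0 \longrightarrow \mathfrak{p}_3 \longrightarrow \mathfrak{g} \longrightarrow \mathfrak{g}/\mathfrak{p}_3 \longrightarrow 0
\]
of $B$-modules. Its associated long exact sequence on $X(w_i)$, combined with Corollary~\ref{cor 7.5}(iv) (which supplies $H^0(w_i, \mathfrak{p}_3) = H^1(w_i, \mathfrak{p}_3) = 0$), forces the natural map $H^0(w_i, \mathfrak{g}) \to H^0(w_i, \mathfrak{g}/\mathfrak{p}_3)$ to be an isomorphism. Since $H^0(w_i, \mathfrak{g}) = \mathfrak{g}$, this yields $\mathrm{Lie}(Aut^0(X_{P_3}(w_i))) \subseteq H^0(w_i, \mathfrak{g}/\mathfrak{p}_3) = \mathfrak{g}$. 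Working over $\mathbb{C}$, one concludes via \cite[Theorem 12.5, p.85 and Theorem 13.1, p.87]{Hum2} that $Aut^0(X_{P_3}(w_i))$ is a closed algebraic subgroup of $G$ containing $P_i$; since it also stabilizes $X_{P_3}(w_i)$, it must coincide with the stabilizer $P_i$.

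The main obstacle lies not in the proposition itself, which is a formal assembly of the preceding lemmas in direct analogy with the type-$B$ and type-$C$ arguments, but rather in the preparatory vanishing $H^1(v_2, \mathfrak{b}) = 0$ of Lemma~\ref{Lemma 7.4}, whose verification demands a careful step-by-step traversal of the full height filtration of the $24$ positive roots of $F_4$; once that input is available, Proposition~\ref{prop 7.6} falls out mechanically.
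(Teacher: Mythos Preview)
Your proposal is correct and mirrors the paper's own proof essentially step for step: Lemma~\ref{lemma 7.2}(iii) for $w_i\in W^{P_3}$, Lemma~\ref{lemma 7.3} for the stabilizer, the observation $w_i^{-1}(\alpha_0)=v_2^{-1}(\alpha_0)<0$ via $\alpha_0=\omega_1$ together with \cite[Theorem~6.6]{Kan} for injectivity, and the short exact sequence combined with Corollary~\ref{cor 7.5}(iv) and \cite[Theorems~12.5 and~13.1]{Hum2} for the reverse inclusion. Your closing remark that the substantive work resides in Lemma~\ref{Lemma 7.4} is also on point.
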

	\begin{proof}
		By Lemma \ref{lemma 7.2}(iii),  $w_{i}(\alpha_{3})$ is a non-simple positive root for all $1\le i\le 4.$ In particular, we have $w_{i}\in W^{\alpha_{3}}$ for all $1\le i\le 4.$  On the other hand, by Lemma \ref{lemma 7.3}, we have the following:
		\begin{itemize}
			\item[(i)] $w_{i}^{-1}(\alpha_{i})$ is a negative root for all $1\le i\le 4.$\\
			\item[(ii)] $w_{i}^{-1}(\alpha_{j})$ is a positive root for all $1\le i,j\le 4$ such that $j\neq i.$
		\end{itemize}

		Thus, $P_{i}$ is the stabilizer of $X_{P_{3}}(w_{i})$ in $G$ for all $1\le i\le 4.$ Since $\alpha_{0}=\omega_{1},$ and  $v_{2}^{-1}(\alpha_{0})$ is a negative root, $w_{i}^{-1}(\alpha_{0})=v_{2}^{-1}(\alpha_{0})$ (as$~w_{0,S\setminus\{\alpha_{1}\}}w_{0,S\setminus\{\alpha_{1}, \alpha_{i}\}}(\alpha_{0})=\alpha_{0}$) is a negative root. Therefore, by using \cite[Theorem 6.6, page 781]{Kan} the natural homomorphism $\varphi: P_{i}\longrightarrow Aut^0(X_{P_{3}}(w_{i}))$ is an injective homomorphism of algebraic groups.
		
		Now consider the following short exact sequence 
		\begin{center}
			$0\longrightarrow \mathfrak{p}_{3}\longrightarrow \mathfrak{g}\longrightarrow \mathfrak{g/p}_{3}\longrightarrow 0$	
		\end{center}
		of $B$-modules.
		
		Consider the following long exact sequence of $B$-modules induced by the above short exact sequence of $B$-modules 
		\begin{center}
			$0\longrightarrow H^0(w_{i},\mathfrak{p}_{3})\longrightarrow H^0(w_{i},\mathfrak{g})\longrightarrow H^0(w_{i}, \mathfrak{g/p}_{3})\longrightarrow$ 
			
			$H^1(w_{i},\mathfrak{p}_{3})\longrightarrow H^1(w_{i},\mathfrak{g})\longrightarrow H^1(w_{i}, \mathfrak{g/p}_{3})\longrightarrow\cdots $	
		\end{center}

		Since $H^0(w_{i},\mathfrak{g})=\mathfrak{g},$ by  using Corollary \ref{cor 7.5}(iv),  we have $H^0(w_{i},\mathfrak{g/p}_{3})=\mathfrak{g}.$ Further, since $d\varphi(\mathfrak{p}_{i})\subseteq$Lie$(Aut^0(X_{P_{3}}(w_{i})))\subseteq H^0(w_{i},\mathfrak{g/p}_{3})=\mathfrak{g}$ and the base field  is $\mathbb{C},$ $Aut^0(X_{P_{3}}(w_{i}))$ is a closed subgroup of $G$ and in fact, it is a stabilizer of $X_{P_{3}}(w_{i})$ in $G$ (see \cite[Theorem 12.5, p.85 and Theorem 13.1,p.87]{Hum2}) Thus, we have  $Aut^0(X_{P_{3}}(w_{i}))=P_{i}.$ 
	\end{proof}
	
	\section{$G$ is of type $G_{2}$}
	In this section, we assume that $G$ is of type $G_{2}.$ Further, we prove that for any $1\le i\le 2,$ there exists a Schubert variety $X_{P_{1}}(w_{i})$ in $G/P_{1}$ such that $P_{i}=Aut^0(X_{P_{1}}(w_{i})).$ 
	
	Since $G$ is of type $G_{2},$ we have  $\alpha_{0}=3\alpha_{1}+2\alpha_{2}=\omega_{2}.$ Then $v_{2}=s_{2}s_{1}s_{2}$ is the unique element of minimal length in $W$ such that $v_{2}^{-1}(\alpha_{0})=-\alpha_{2}$ as in Corollary \ref{cor 2.2}. Then $v_{2}$ satisfies the following
	\begin{lemma}\label{lemma 8.1} Then we have
		\begin{itemize}
			\item[(i)] $v_{2}^{-1}(\alpha_{0})$ is a negative root. 
			
			\item[(ii)] $v_{2}(\alpha_{1})=2\alpha_{1}+\alpha_{2}.$
			
			\item[(iii)] $(s_{1}v_{2})^{-1}(\alpha_{0})$ is a negative root. 
			
			\item[(iv)] $s_{1}v_{2}(\alpha_{1})=\alpha_{1}+\alpha_{2}.$ 
		\end{itemize}
	\end{lemma}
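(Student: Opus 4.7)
The lemma is a direct computation in the $G_2$ root system, so the strategy is simply to evaluate each of $v_2^{-1}(\alpha_0)$, $v_2(\alpha_1)$, $(s_1v_2)^{-1}(\alpha_0)$ and $s_1v_2(\alpha_1)$ by applying the simple reflections one at a time. The only data one needs are the Cartan integers for $G_2$: since $\alpha_2$ is the long simple root and $\alpha_1$ the short one (as can be read off from Figure~4 and the expression $\alpha_0=3\alpha_1+2\alpha_2$), one has $\langle \alpha_1,\alpha_2\rangle=-1$ and $\langle \alpha_2,\alpha_1\rangle=-3$, so that $s_1(\alpha_2)=3\alpha_1+\alpha_2$ and $s_2(\alpha_1)=\alpha_1+\alpha_2$.

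For parts (i) and (ii), I would expand $v_2^{-1}=s_2s_1s_2$ applied to $\alpha_0$ step by step: first $s_2(\alpha_0)=3\alpha_1+\alpha_2$, then $s_1$ sends this to $\alpha_2$, and finally $s_2$ sends $\alpha_2$ to $-\alpha_2$, confirming $v_2^{-1}(\alpha_0)=-\alpha_2<0$; this also matches Corollary \ref{cor 2.2} applied to the long simple root $\alpha_2$. For (ii), the same reduced word applied in the forward direction gives $v_2(\alpha_1)=s_2s_1s_2(\alpha_1)=s_2s_1(\alpha_1+\alpha_2)=s_2(2\alpha_1+\alpha_2)=2\alpha_1+\alpha_2$.

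For parts (iii) and (iv), the key observation is that $s_1$ fixes $\alpha_0$, because $s_1(\alpha_0)=s_1(3\alpha_1+2\alpha_2)=-3\alpha_1+2(3\alpha_1+\alpha_2)=3\alpha_1+2\alpha_2=\alpha_0$. Hence $(s_1v_2)^{-1}(\alpha_0)=v_2^{-1}s_1(\alpha_0)=v_2^{-1}(\alpha_0)=-\alpha_2$, which is again a negative root. Part (iv) then follows from (ii): $s_1v_2(\alpha_1)=s_1(2\alpha_1+\alpha_2)=-2\alpha_1+(3\alpha_1+\alpha_2)=\alpha_1+\alpha_2$.

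There is no real obstacle here; the only thing to be careful about is using the correct Cartan pairing for the non-simply-laced pair $(\alpha_1,\alpha_2)$ and keeping track of which simple root is long versus short. Once this is fixed, each of the four statements reduces to a three- or four-step reflection computation and the $s_1$-invariance of $\alpha_0$ makes (iii) immediate from (i).
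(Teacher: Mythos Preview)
Your proof is correct and is exactly what the paper does: it states that the four items follow ``from the usual calculation,'' and your step-by-step reflection computations (together with the observation that $s_1$ fixes $\alpha_0$) spell out precisely that calculation.
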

	\begin{proof}
		Proof of $(i),(ii),(iii)$ and $(iv)$ follows from the usual calculation. 
	\end{proof}
	Let $w_{1}=s_{1}v_{2}$ and $w_{2}=v_{2}.$
	\begin{lemma}\label{lemma 8.2} Then we have
		\begin{itemize}
			\item [(i)] $H^i(w_{2}, \alpha_{1})=0$ for $i=0,1.$
			
			\item [(ii)] $H^i(w_{2}, \mathfrak{b})=0$ for $i=0,1$
			
			\item [(iii)] $H^i(w_{2}, \mathfrak{p}_{1})=0$ for $i=0,1$
			
			\item [(iv)] $H^i(w_{1}, \mathfrak{p}_{1})=0$
			for $i=0,1.$	
		\end{itemize}
	\end{lemma}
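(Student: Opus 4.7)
The proof follows the same template as Corollaries \ref{cor5.7} and \ref{cor 6.6}, except that $G_2$ requires a slightly more delicate local computation because $\alpha_1$ is short. For part (i), I write $w_2 = (s_2 s_1)s_2$: since $\alpha_1$ is short and $\alpha_2$ is long, $\langle \alpha_1, \alpha_2^\vee\rangle = -1$, so Lemma \ref{lem 2.1}(4) immediately gives $H^j(w_2, \alpha_1) = 0$ for every $j \geq 0$.

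For part (ii), the vanishing $H^0(w_2, \mathfrak{b}) = 0$ follows from $v_2^{-1}(\alpha_0) = -\alpha_2$ being negative (Lemma \ref{lemma 8.1}(i)) by the proof of \cite[Theorem 4.1, p.771]{Kan}, exactly as in Corollaries \ref{cor5.7}(i) and \ref{cor 6.6}(i). The real work is $H^1(w_2, \mathfrak{b}) = 0$, which I will establish by iterating the Leray short exact sequence along the reduced expression $w_2 = s_2 s_1 s_2$ and repeatedly invoking Lemmas \ref{lem 3.1} and \ref{lem 3.2}. In the first step, the $\hat{B}_{\alpha_2}$-decomposition of $\mathfrak{b}$ has only one potentially troublesome summand: $\mathbb{C} h(\alpha_2) \oplus \mathbb{C}_{-\alpha_2} \cong V \otimes \mathbb{C}_{-\omega_2}$ (pairing $-1$) is killed by Lemma \ref{lem 3.1}(3), while the remaining $\alpha_2$-strings in $\mathfrak{b}$ have length at most $2$ and contribute only to $H^0$ via Lemma \ref{lem 3.1}(1). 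Thus $H^1(s_2, \mathfrak{b}) = 0$ and $H^0(s_2, \mathfrak{b})$ has one-dimensional weight spaces at $\{0,\ -\alpha_1,\ -(\alpha_1+\alpha_2),\ -(2\alpha_1+\alpha_2),\ -(3\alpha_1+\alpha_2),\ -\alpha_0\}$. In the second step, the $\hat{B}_{\alpha_1}$-decomposition of $H^0(s_2, \mathfrak{b})$ again has only one survivor: the summand $\mathbb{C} h(\alpha_1) \oplus \mathbb{C}_{-\alpha_1} \cong V \otimes \mathbb{C}_{-\omega_1}$ is killed (pairing $-1$), and the three weights $-(\alpha_1+\alpha_2), -(2\alpha_1+\alpha_2), -(3\alpha_1+\alpha_2)$, with $\alpha_1^\vee$-pairings $1,-1,-3$, form a single three-dimensional indecomposable $\hat{B}_{\alpha_1}$-summand---the truncation of the length-$4$ $\alpha_1$-string through $-\alpha_2$ that remains after $-\alpha_2$ was killed in the first step. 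By Lemma \ref{lem 3.2}(1) this summand must be of the form $\mathrm{Sym}^2 V \otimes \mathbb{C}_\mu$ with $\langle \mu, \alpha_1^\vee\rangle = -1$, so Lemma \ref{lem 3.1}(3) kills it as well. Only $\mathbb{C}_{-\alpha_0}$ (pairing $0$) survives, giving $H^0(s_1 s_2, \mathfrak{b}) = \mathbb{C}_{-\alpha_0}$ and $H^1(s_1 s_2, \mathfrak{b}) = 0$. Since $\alpha_0 = \omega_2$ forces $\langle -\alpha_0, \alpha_2^\vee\rangle = -1$, the third step $H^i(s_2, \mathbb{C}_{-\alpha_0}) = 0$ follows once more from Lemma \ref{lem 3.1}(3), and the Leray SES then delivers $H^1(w_2, \mathfrak{b}) = 0$.

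Part (iii) follows by feeding (i) and (ii) into the long exact sequence associated to the short exact sequence $0 \to \mathfrak{b} \to \mathfrak{p}_1 \to \mathbb{C}_{\alpha_1} \to 0$ of $B$-modules. Part (iv) follows from (iii) because $w_1 = s_1 w_2$ is reduced with $\ell(w_1) = \ell(w_2) + 1$, so the Leray SES
\[
0 \to H^1(s_1, H^{i-1}(w_2, \mathfrak{p}_1)) \to H^i(w_1, \mathfrak{p}_1) \to H^0(s_1, H^i(w_2, \mathfrak{p}_1)) \to 0
\]
is sandwiched by zeros for $i = 0, 1$. The main obstacle is the second step of part (ii): identifying the truncated three-dimensional $\alpha_1$-string as $\mathrm{Sym}^2 V \otimes \mathbb{C}_\mu$ via Lemma \ref{lem 3.2}(1), which is what permits Lemma \ref{lem 3.1}(3) to annihilate the only $B_{\alpha_1}$-summand that could otherwise contribute to $H^1$.
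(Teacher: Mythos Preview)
Your proposal is correct and follows essentially the same route as the paper: both arguments iterate the Leray spectral sequence along $w_2=s_2s_1s_2$, kill $\mathbb{C}h(\alpha_2)\oplus\mathbb{C}_{-\alpha_2}$ in the first step, identify the three weights $-(\alpha_1+\alpha_2),-(2\alpha_1+\alpha_2),-(3\alpha_1+\alpha_2)$ as a single indecomposable $(\text{3-dim irreducible})\otimes\mathbb{C}_{-\omega_1}$ in the second step, and finish with $\mathbb{C}_{-\alpha_0}$ in the third; parts (iii) and (iv) are handled identically via the short exact sequence $0\to\mathfrak{b}\to\mathfrak{p}_1\to\mathbb{C}_{\alpha_1}\to 0$ and the factorization $w_1=s_1w_2$. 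The only cosmetic difference is that you invoke \cite[Theorem~4.1]{Kan} for $H^0(w_2,\mathfrak{b})=0$ whereas the paper obtains it as a by-product of the same step-by-step computation, and you spell out (via Lemma~\ref{lem 3.2}) why the truncated three-term $\alpha_1$-string is still of the form $V'\otimes\mathbb{C}_\lambda$, which the paper simply asserts.
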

	\begin{proof}
		Proof of (i): Since $\langle \alpha_{1}, \alpha_{2} \rangle=-1,$ by Lemma \ref {lem 3.1} we  have $H^i(w_{2},\alpha_{1})=0$ for $i=0,1.$
		
		Proof of (ii): Note that $\mathfrak{b}=\mathbb{C}h(\alpha_{1})\oplus \mathbb{C}h(\alpha_{2})\oplus \mathbb{C}_{-\alpha_{1}}\oplus \mathbb{C}_{-\alpha_{2}}\oplus \mathbb{C}_{-(\alpha_{1}+\alpha_{2})}\oplus \mathbb{C}_{-(2\alpha_{1}+\alpha_{2})}\oplus \mathbb{C}_{-(3\alpha_{1}+\alpha_{2})}\oplus \mathbb{C}_{-(3\alpha_{1}+2\alpha_{2})}.$
		
		Since $\mathbb{C}h(\alpha_{2})\oplus \mathbb{C}_{-\alpha_{2}}=V\otimes \mathbb{C}_{-\omega_{2}},$ where $V$ is the standard two dimensional $\hat{L}_{\alpha_{2}}$-module, by using Lemma \ref{lem 3.1} we have $H^i(s_{2}, \mathbb{C}h(\alpha_{2})\oplus \mathbb{C}_{-\alpha_{2}})=0$ for $i=0,1.$ Note that $s_{2}$ permutes all the positive roots other than $\alpha_{2}.$
		
		Therefore, we have $H^0(s_{2}, \mathfrak{b})=\mathbb{C}h(\alpha_{1})\oplus \mathbb{C}_{-\alpha_{1}}\oplus \mathbb{C}_{-(\alpha_{1}+\alpha_{2})}\oplus \mathbb{C}_{-(\alpha_{2}+2\alpha_{1})}\oplus \mathbb{C}_{-(3\alpha_{1}+2\alpha_{2})}\oplus \mathbb{C}_{-(3\alpha_{1}+\alpha_{2})}$ and $H^1(s_{2}, \mathfrak{b})=0.$
		
		Note that $\mathbb{C}h(\alpha_{1})\oplus \mathbb{C}_{-\alpha_{1}}=V\otimes \mathbb{C}_{-\omega_{1}},$ where $V$ is the standard two dimensional $\hat{L_{\alpha_{1}}}$-module, and $\mathbb{C}_{-(\alpha_{1}+\alpha_{2})}\oplus \mathbb{C}_{-(2\alpha_{1}+\alpha_{2})}\oplus \mathbb{C}_{-(3\alpha_{1}+\alpha_{2})}=V^{'}\otimes\mathbb{C}_{-\omega_{1}}$ where $V^{'}$ is the standard three dimensional $\hat{L}_{\alpha_{1}}$ module. 
		
		Therefore, by using Lemma \ref{lem 3.1} we have $H^i(s_{1},(\mathbb{C}h(\alpha_{1})\oplus \mathbb{C}_{-\alpha_{1}})\oplus (\mathbb{C}_{-(\alpha_{1}+\alpha_{2})}\oplus \mathbb{C}_{-(2\alpha_{1}+\alpha_{2})}\oplus \mathbb{C}_{-(3\alpha_{1}+\alpha_{2})}))=0$ for $i=0,1.$ 
		
		Since $\langle -(3\alpha_{1}+2\alpha_{2}), \alpha_{1}\rangle =0,$ by using Lemma \ref {lem 3.1} we have $H^0(s_{1},\mathbb{C}_{-(3\alpha_{1}+2\alpha_{2})})=\mathbb{C}_{-(3\alpha_{1}+2\alpha_{2})}$ and $H^1(s_{1},\mathbb{C}_{-(3\alpha_{1}+2\alpha_{2})})=0.$ 
		
		Therefore, combining the above discussion we have $H^1(s_{1},H^0(s_{2},\mathfrak{b}))=0$ and $H^0(s_{1}s_{2},\mathfrak{b})=\mathbb{C}_{-(3\alpha_{1}+2\alpha_{2})}.$ Since $H^0(s_{1}, H^1(s_{2},\mathfrak{b}))=0,$ and $H^1(s_{1},  H^0(s_{2},\mathfrak{b}))=0,$ by using SES we have $H^1(s_{1}s_{2},\mathfrak{b})=0.$
		Since $\langle -(3\alpha_{1}+2\alpha_{2}), \alpha_{2}\rangle =-1,$ by using Lemma \ref {lem 3.1} we have  $H^0(w_{2}, \mathfrak{b})=H^0(s_{2}, H^0(s_{1}s_{2}, \mathfrak{b}))=H^0(s_{2}, \mathbb{C}_{-(3\alpha_{1}+2\alpha_{2})}))=0,$ and $H^1(s_{2}, H^0(s_{1}s_{2},\mathfrak{b}))=0.$
		Since $H^1(s_{1}s_{2}, \mathfrak{b})=0,$ we have $H^0(s_{2}, H^1(s_{1}s_{2}, \mathfrak{b}))=0.$ Therefore, by using SES we have $H^1(w_{2}, \mathfrak{b})=0.$
		
		Proof of (iii): Consider the short exact sequence 
		\begin{center}
			$0\longrightarrow \mathfrak{b}\longrightarrow\mathfrak{p}_{1}\longrightarrow\mathbb{C}_{\alpha_{1}}\longrightarrow0$
		\end{center}
		of $B$-modules.
		
		Then we have the following long exact sequence 
		\begin{center}
			$0\longrightarrow H^0(w_{2},\mathfrak{b})\longrightarrow H^0(w_{2},\mathfrak{p}_{1})\longrightarrow H^0(w_{2},\alpha_{1})\longrightarrow$ 	$H^1(w_{2}, \mathfrak{b})\longrightarrow H^1(w_{2},\mathfrak{p}_{1})\longrightarrow H^1(w_{2},\alpha_{1})\longrightarrow H^2(w_{2},\mathfrak{b})\longrightarrow\cdots$
		\end{center}
		of $B$-modules.
		
		Therefore, by using $(i)$ and $(ii),$ proof of $(iii)$ follows. 
		
		Proof of (iv): By $(iii)$ we have $H^i(w_{2}, \mathfrak{p}_{1})=0$ for $i=0,1.$ Further, note that $w_{1}=s_{1}w_{2}$ and $\ell(w_{1})=\ell(w_{2})+1.$  Therefore, by using SES, we have $H^i(w_{1}, \mathfrak{p}_{1})=0$ for $i=0,1.$
	\end{proof}

	Observe that by Lemma \ref{lemma 8.1}(ii),(iv), we have $w_{i} \in W^{P_{1}}.$
	\begin{proposition}\label{prop 8.3}
		We have $P_{i}=Aut^0(X_{P_{1}}(w_{i}))$ for $i=1,2.$
	\end{proposition}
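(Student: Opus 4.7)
The plan is to mirror the argument used for Proposition \ref{prop 7.6} (and the analogous statements in types $B$ and $C$), adapting the inputs from Lemma \ref{lemma 8.1} and Lemma \ref{lemma 8.2} for $G_2$. The structure is three-fold: identify $P_i$ as the scheme-theoretic stabilizer of $X_{P_1}(w_i)$, produce an injective homomorphism $\varphi\colon P_i\hookrightarrow Aut^0(X_{P_1}(w_i))$ via the highest-root criterion of \cite[Theorem 6.6, p.781]{Kan}, and then use cohomology vanishing to bound $\mathrm{Lie}(Aut^0(X_{P_1}(w_i)))$ inside $\mathfrak{g}$, forcing equality on the group level.

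First, I would verify that $w_i\in W^{P_1}$ (so that $X_{P_1}(w_i)$ is well-defined as the closure of $Bw_iP_1/P_1$): this is immediate from Lemma \ref{lemma 8.1}(ii),(iv), which shows $w_i(\alpha_1)\in R^+\setminus S$. Next, a direct computation with the reduced expressions $w_2=s_2s_1s_2$ and $w_1=s_1s_2s_1s_2$ shows that $w_i^{-1}(\alpha_i)$ is a negative root while $w_i^{-1}(\alpha_j)$ is positive for $j\neq i$; this identifies $P_i$ as the stabilizer of $X_{P_1}(w_i)$ in $G$. Combined with Lemma \ref{lemma 8.1}(i),(iii), which gives $w_i^{-1}(\alpha_0)<0$, \cite[Theorem 6.6, p.781]{Kan} then furnishes an injective homomorphism of algebraic groups $\varphi\colon P_i\longrightarrow Aut^0(X_{P_1}(w_i))$.

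For the reverse containment, I would consider the short exact sequence of $B$-modules
\begin{equation*}
0\longrightarrow \mathfrak{p}_1\longrightarrow \mathfrak{g}\longrightarrow \mathfrak{g}/\mathfrak{p}_1\longrightarrow 0
\end{equation*}
and the associated long exact sequence in cohomology on $X_{P_1}(w_i)$. By Lemma \ref{lemma 8.2}(iv) we have $H^0(w_i,\mathfrak{p}_1)=H^1(w_i,\mathfrak{p}_1)=0$, and since $H^0(w_i,\mathfrak{g})=\mathfrak{g}$, this forces $H^0(w_i,\mathfrak{g}/\mathfrak{p}_1)=\mathfrak{g}$. The sheaf $\mathcal{L}(\mathfrak{g}/\mathfrak{p}_1)$ restricted to $X_{P_1}(w_i)$ is the tangent sheaf pulled back from $G/P_1$, so $\mathrm{Lie}(Aut^0(X_{P_1}(w_i)))\hookrightarrow H^0(w_i,\mathfrak{g}/\mathfrak{p}_1)=\mathfrak{g}$, and in particular $d\varphi(\mathfrak{p}_i)\subseteq \mathrm{Lie}(Aut^0(X_{P_1}(w_i)))\subseteq \mathfrak{g}$.

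Finally, since the base field is $\mathbb{C}$, by \cite[Theorem 12.5, p.85 and Theorem 13.1, p.87]{Hum2} this shows $Aut^0(X_{P_1}(w_i))$ is a closed connected subgroup of $G$ containing $P_i$, and as it acts on $X_{P_1}(w_i)$, it lies in the stabilizer of $X_{P_1}(w_i)$ in $G$, which we already identified as $P_i$. Thus $Aut^0(X_{P_1}(w_i))=P_i$ for $i=1,2$. I do not expect any genuine obstacle here: the computational inputs (sign of $w_i^{-1}(\alpha_j)$, sign of $w_i^{-1}(\alpha_0)$, and the cohomology vanishing) are all supplied by Lemmas \ref{lemma 8.1} and \ref{lemma 8.2}, so the argument is essentially a verbatim copy of the proof of Proposition \ref{prop 7.6}.
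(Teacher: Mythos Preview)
Your proposal is correct and follows essentially the same approach as the paper's proof: the stabilizer computation, the use of Lemma \ref{lemma 8.1} for $w_i^{-1}(\alpha_0)<0$ together with \cite[Theorem 6.6]{Kan}, the short exact sequence argument, and the cohomology vanishing from Lemma \ref{lemma 8.2} are all exactly what the paper does. The only trivial discrepancy is that you cite Lemma \ref{lemma 8.2}(iv) alone, whereas the vanishing for $w_2$ is recorded in part (iii); the paper cites (iii) and (iv) together.
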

	\begin{proof}
		By Lemma \ref{lemma 8.1}(ii),(iv), $w_{i}(\alpha_{1})$ is a non-simple positive root. On the other hand, we have the following:
		
		\begin{itemize}
			\item[(i)]  $w_{i}^{-1}(\alpha_{i})$ is a negative root.\\
			
			\item[(ii)]  $w_{i}^{-1}(\alpha_{j})$ is a positive root for $j\neq i.$
		\end{itemize}

		Thus $P_{i}$ is the stabilizer of $X_{P_{1}}(w_{i})$ in $G.$ Since $\alpha_{0}=\omega_{2},$ and  $v_{2}^{-1}(\alpha_{0})$ is a negative root, $w_{i}^{-1}(\alpha_{0})=v_{2}^{-1}(\alpha_{0})$ is a negative root. Therefore, by using \cite[Theorem 6.6, p.781]{Kan} the natural homomorphism $\varphi :P_{i}\longrightarrow Aut^0(X_{P_{1}}(w_{i}))$ is an injective homomorphism of algebraic groups. 
		
		Now consider the following SES 
		\begin{center}
			$0\longrightarrow \mathfrak{p}_{1}\longrightarrow \mathfrak{g}\longrightarrow \mathfrak{g/p}_{1}\longrightarrow 0$	
		\end{center}
		of $B$-modules.
		
		Thus, we have the following long exact sequence 
		\begin{center}
			$0\longrightarrow H^0(w_{i},\mathfrak{p}_{1})\longrightarrow H^0(w_{i},\mathfrak{g})\longrightarrow H^0(w_{i}, \mathfrak{g/p}_{1})\longrightarrow$ 
			
			$H^1(w_{i},\mathfrak{p}_{1})\longrightarrow H^1(w_{i},\mathfrak{g})\longrightarrow H^1(w_{i}, \mathfrak{g/p}_{1})\longrightarrow\cdots $	
		\end{center}
		of $B$-modules.  
		
		Since $H^0(w_{i},\mathfrak{g})=\mathfrak{g},$ by  using Lemma \ref{lemma 8.2}(iii),(iv),  we have $H^0(w_{i},\mathfrak{g/p}_{1})=\mathfrak{g}.$ Further, since $d\varphi(\mathfrak{p}_{i})\subseteq$Lie$(Aut^0(X_{P_{1}}(w_{i})))\subseteq H^0(w_{i},\mathfrak{g/p}_{1})=\mathfrak{g}$ and the base field  is $\mathbb{C},$ $Aut^0(X_{P_{1}}(w_{i}))$ is a closed subgroup of $G$ containing $P_{i}$(see \cite[Theorem 12.5, p.85 and Theorem 13.1,p.87]{Hum2}). Hence, $Aut^0(X_{P_{1}}(w_{i}))$ is the stabilizer of $X_{P_{1}}(w_{i})$ in $G.$ Thus, we have  $Aut^0(X_{P_{1}}(w_{i}))=P_{i}.$  	
	\end{proof}
	
	\section{Main Theorem}
	Let $G$ be a simple algebraic group of adjoint type over $\mathbb{C}.$ In this section, we prove that $\alpha_{r}$ is co-minuscule if and only if for any parabolic subgroup $Q$ containing $B$ properly, there is no Schubert variety $X_{Q}(w)$ in $G/Q$ such that $P_{r}=Aut^{0}(X_{Q}(w)).$

	Recall that given any parabolic subgroup $P$ of $G$ containing $B$ properly there exists a Schubert variety $X(w)$ in $G/B$ such that $P=Aut^0(X(w))$ (see \cite[Theorem 2.1,p.543]{KP1}). The existence of such a Schubert variety in a partial flag variety may not be true. This proposition illustrates this fact.
	\begin{proposition}\label{theorem 9.1} Let $\alpha_{r}$ be co-minuscule. If there exists a Schubert variety $X_{Q}(w)$ in  $G/Q$ for some parabolic subgroup $Q$ of $G$ containing $B$ such that $P_{r}=Aut^0(X_{Q}(w)),$ then we have $Q=B.$
	\end{proposition}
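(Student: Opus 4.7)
The plan is to proceed by contradiction: assume $Q\supsetneq B$ and show this is incompatible with $P_{r}=Aut^{0}(X_{Q}(w))$.

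First I would extract constraints from the hypothesis. Since $P_{r}$ must be the stabilizer of $X_{Q}(w)$ in $G$ under the left action on $G/Q$, one has $w\in W^{Q}$ together with $w^{-1}(\alpha_{r})<0$ and $w^{-1}(\alpha_{j})>0$ for every $j\neq r$; equivalently, $w^{-1}\in W^{S\setminus\{\alpha_{r}\}}$. Moreover the natural homomorphism $\varphi\colon P_{r}\longrightarrow Aut^{0}(X_{Q}(w))$ must be an isomorphism, hence injective, and by the criterion of \cite[Theorem 6.6]{Kan} invoked throughout Sections 5--8, injectivity of $\varphi$ forces $w^{-1}(\alpha_{0})<0$.

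Next I would invoke the co-minuscule hypothesis. The conditions $w^{-1}\in W^{S\setminus\{\alpha_{r}\}}$ and $w^{-1}(\alpha_{0})<0$ combined with Lemma \ref{lemma 1.2} pin down $w^{-1}=w_{0}^{S\setminus\{\alpha_{r}\}}$, so $w=w_{0,S\setminus\{\alpha_{r}\}}w_{0}$ is uniquely determined. A short computation using Lemma \ref{lem 2.3} (together with $-w_{0}=\mathrm{id}$ on simple roots) then gives $w(\alpha_{r})=-\alpha_{0}$ while $w(\alpha_{j})>0$ for every $j\neq r$, forcing $Q=P_{J}$ with $J\subseteq S\setminus\{\alpha_{r}\}$. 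Note also $\ell(w)=\ell(w_{0}^{S\setminus\{\alpha_{r}\}})=\dim G/P_{S\setminus\{\alpha_{r}\}}$.

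Now suppose $J\neq\emptyset$. Since $Q=P_{J}\subseteq P_{S\setminus\{\alpha_{r}\}}$, there is a natural projection $\sigma\colon G/Q\longrightarrow G/P_{S\setminus\{\alpha_{r}\}}$, and the dimension identity above makes $\sigma|_{X_{Q}(w)}$ surjective and birational onto the cominuscule Grassmannian $G/P_{S\setminus\{\alpha_{r}\}}$. In the extreme subcase $J=S\setminus\{\alpha_{r}\}$, one directly obtains $X_{Q}(w)=G/P_{S\setminus\{\alpha_{r}\}}$, a homogeneous $G$-space, so Demazure's theorem gives $Aut^{0}(X_{Q}(w))\supseteq G\supsetneq P_{r}$, contradicting the hypothesis.

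The main obstacle will be the intermediate case $\emptyset\neq J\subsetneq S\setminus\{\alpha_{r}\}$, where the homogeneous-space argument no longer applies directly and one must exhibit global vector fields on $X_{Q}(w)$ outside $\mathfrak{p}_{r}$. My plan is to run a cohomological computation in the style of Sections 5--8: analyze $H^{0}(w,\mathfrak{g}/\mathfrak{q})$ via the long exact sequence attached to $0\to\mathfrak{q}\to\mathfrak{g}\to\mathfrak{g}/\mathfrak{q}\to 0$ and the explicit form of $w=w_{0,S\setminus\{\alpha_{r}\}}w_{0}$, repeatedly applying Lemma \ref{lem 3.1} and Lemma \ref{lem 5.4}. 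The key input should be the identity $w_{0,S\setminus\{\alpha_{r}\}}(\alpha_{r})=\alpha_{0}$ of Lemma \ref{lem 2.3}, which ought to force the $\alpha_{0}$-weight direction in $\mathfrak{g}$ to survive in $H^{0}(w,\mathfrak{g}/\mathfrak{q})$ as soon as $Q\neq B$, producing global vector fields beyond $\mathfrak{p}_{r}$ and yielding the required contradiction.
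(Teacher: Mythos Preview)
Your argument is correct and matches the paper through the identification $w=w_{0}^{S\setminus\{\alpha_{r}\}}$ and the conclusion $J\subseteq S\setminus\{\alpha_{r}\}$. The divergence, and the gap, is in how you finish. Your case $J=S\setminus\{\alpha_{r}\}$ is fine, but for the intermediate case $\emptyset\neq J\subsetneq S\setminus\{\alpha_{r}\}$ you only sketch a cohomological programme (``analyze $H^{0}(w,\mathfrak{g}/\mathfrak{q})$ \ldots\ the $\alpha_{0}$-weight direction ought to survive'') without carrying it out. Even granting that such a computation could be done, it is not clear that producing extra classes in $H^{0}(w,\mathfrak{g}/\mathfrak{q})$ would by itself enlarge $\mathrm{Lie}(Aut^{0})$; you would still need to identify these sections with genuine vector fields on $X_{Q}(w)$, and that step is delicate when $X_{Q}(w)$ is singular.

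The paper avoids all of this with a two-line Weyl group argument that you were one step away from. You already observed (using $-w_{0}=\mathrm{id}$ and Lemma~\ref{lem 2.3}) that $w(\alpha_{j})>0$ for $j\neq r$; in fact the same computation gives more, namely $w(\alpha_{j})$ is again a \emph{simple} root in $S\setminus\{\alpha_{r}\}$. Equivalently, letting $\tau$ be the Dynkin diagram automorphism of $S\setminus\{\alpha_{r}\}$ induced by $-w_{0,S\setminus\{\alpha_{r}\}}$, one has $w^{-1}s_{j}w=s_{\tau(\alpha_{j})}$ for every $\alpha_{j}\in S\setminus\{\alpha_{r}\}$. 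Now pick any $\alpha_{i}\in J$ and let $\alpha_{j}=\tau^{-1}(\alpha_{i})$; then $s_{j}w=ws_{i}$, hence $s_{j}wW_{Q}=wW_{Q}$, so $P_{j}$ stabilizes $X_{Q}(w)$. Since $\alpha_{j}\in S\setminus\{\alpha_{r}\}$, this contradicts the fact (already extracted from $P_{r}=Aut^{0}(X_{Q}(w))$) that the stabilizer of $X_{Q}(w)$ in $G$ is exactly $P_{r}$. No case division and no cohomology are needed.
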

	\begin{proof}
		First note that if $G$ is simply-laced, then $\omega_{i}$ is minuscule if and only if $\alpha_{i}$ is co-minuscule (see \cite[Lemma 3.1]{KS2}). So, the proposition is true when $G$ is simply-laced (see \cite[Theorem 9.1]{KS2}). So, we may assume that $G$ is non-simply-laced.
		
		Assume that $P_{r}=Aut^0(X_{Q}(w))$  for some parabolic subgroup $Q$ containing $B$ properly and for some $w\in W^{Q}.$ Then there exists a non-empty subset $J$ of $S$ such that $Q=P_{J}.$ Since $P_{r}=Aut^0(X_{Q}(w)),$ we have $w^{-1}(\alpha_{j})>0$ for all $j\neq r,$ i.e., we have $w^{-1}\in W^{S \setminus \{\alpha_{r}\}}.$ Since the action $P_{r}$ on $X_{Q}(w)$ is faithful, by \cite[Theorem 6.6, p.781]{Kan} we have $w^{-1}(\alpha_{0})<0.$ Further, since $w^{-1}(\alpha_{0})<0$ and $w^{-1}\in W^{S \setminus \{\alpha_{r}\}},$ by Lemma \ref{lemma 1.2} we have $w^{-1}=w_{0}^{S \setminus \{\alpha_{r}\}}.$ Thus, we have $w=({w_{0}^{S \setminus \{\alpha_{r}\}}})^{-1}.$ Note that $(w_{0}^{S\setminus\{\alpha_{r}\}})^{-1}=w_{0,S\setminus\{\alpha_{r}\}}w_{0}=w_{0}(w_{0}w_{0,S\setminus\{\alpha_{r}\}}w_{0})=w_{0}w_{0,S\setminus\{\alpha_{r}\}},$ as $w_{0}(\alpha_{i})=-\alpha_{i}$ for all $1\le i\le n$ (see \cite[p.216, p.217, p.233]{Bou}). Therefore, we have $w={w_{0}^{S \setminus \{\alpha_{r}\}}}.$ Hence, we have $J\subset S\setminus \{\alpha_{r}\}.$ Let $\tau$ be the Dynkin diagram automorphism of $S\setminus \{\alpha_{r}\}$ induced by  $-w_{0, S \setminus \{\alpha_{r}\}}.$ Now since $J$ is nonempty, there exists $\alpha_{i} \in J$ such that $\alpha_{i}\neq \alpha_{r}.$ So, there exists $\alpha_{j}\in S\setminus \{\alpha_{r}\}$ such that $\tau(\alpha_{j})=\alpha_{i}.$ Now we have $w^{-1}s_{j}w=w_{0}w_{0, S\setminus \{\alpha_{r}\}}s_{j}w_{0, S\setminus \{\alpha_{r}\}}w_{0}=s_{\tau(\alpha_{j})}=s_{i}.$ Therefore, we have $s_{j}w=w~ mod~W_{Q}.$ This is a contradiction to the fact that $P_{r}$ is the stabilizer of $X_{Q}(w)$ in $G.$ Hence, we have $Q=B.$
	\end{proof}
	\begin{theorem}\label{theorem 9.2}
		$\alpha_{r}$ is  co-minuscule if and only if for any parabolic subgroup $Q$ containing $B$ properly, there is no Schubert variety $X_{Q}(w)$ in $G/Q$ such that $P_{r}=Aut^0(X_{Q}(w)).$
	\end{theorem}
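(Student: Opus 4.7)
The plan is to prove the biconditional by combining Proposition \ref{theorem 9.1} with the existence results of Sections 5--8 and the simply-laced case from \cite[Theorem 9.2]{KS2}. The ``only if'' direction will follow almost immediately from Proposition \ref{theorem 9.1}; the ``if'' direction requires a type-by-type verification relying on the propositions already proved.

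For the forward implication, I would argue by contradiction: suppose $\alpha_r$ is co-minuscule and $P_r = Aut^0(X_Q(w))$ for some parabolic $Q \supsetneq B$ and some $w \in W^Q$. Proposition \ref{theorem 9.1} then forces $Q = B$, contradicting $Q \supsetneq B$, so no such Schubert variety can exist.

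For the converse, I would split into the simply-laced and non-simply-laced cases. In the simply-laced case, the equivalence ``$\omega_r$ is minuscule if and only if $\alpha_r$ is co-minuscule'' (Lemma 3.1 of \cite{KS2}) lets me invoke \cite[Theorem 9.2]{KS2} directly. In the non-simply-laced case, I would enumerate the non-co-minuscule simple roots via Table 1 and appeal to the matching proposition: Proposition \ref{prop5.8} for $\alpha_2, \ldots, \alpha_n$ in type $B_n$; Proposition \ref{prop6.7} for $\alpha_1, \ldots, \alpha_{n-1}$ in type $C_n$; Proposition \ref{prop 7.6} for all four simple roots in type $F_4$; and Proposition \ref{prop 8.3} for both simple roots in type $G_2$. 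Each of these provides an explicit maximal parabolic $P_J$ and element $w \in W^{P_J}$ with $P_r = Aut^0(X_{P_J}(w))$, completing the converse.

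The main obstacle is not in this final assembly, which is essentially a bookkeeping of cases, but rather in the inputs themselves. Proposition \ref{theorem 9.1} depends on Lemma \ref{lemma 1.2} to pin down $w^{-1} = w_0^{S \setminus \{\alpha_r\}}$ from co-minusculeness, together with the Dynkin diagram symmetry $\tau$ induced by $-w_{0, S \setminus \{\alpha_r\}}$ to obstruct any additional $s_j$ in the stabilizer. The existence propositions of Sections 5--8 are the more delicate ingredient: they require the distinguished element $v$ from Proposition \ref{Prop 2.1} satisfying $v^{-1}(\alpha_0) < 0$, together with the cohomological vanishing $H^j(w, \mathfrak{p}_J) = 0$ for $j = 0, 1$ proved via the Demazure-type Lemmas \ref{lem 2.1} and \ref{lem 3.1}, so that the natural injection $P_r \hookrightarrow Aut^0(X_{P_J}(w))$ from \cite[Theorem 6.6]{Kan} is forced to be surjective.
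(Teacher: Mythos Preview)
Your proposal is correct and follows essentially the same assembly as the paper's own proof: reduce the simply-laced case to \cite[Theorem 9.2]{KS2} via \cite[Lemma 3.1]{KS2}, and in the non-simply-laced case invoke Proposition \ref{theorem 9.1} for the forward direction and Propositions \ref{prop5.8}, \ref{prop6.7}, \ref{prop 7.6}, \ref{prop 8.3} for the converse. One terminological slip: the parabolics $Q$ produced in Sections 5--8 (namely $P_{n}$, $P_{n-1}$, $P_{3}$, $P_{1}$) are \emph{minimal} parabolics in the paper's convention, not maximal ones---but since the theorem only requires $Q\supsetneq B$, this does not affect the argument.
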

	\begin{proof}
		First note that if $G$ is simply-laced, then $\omega_{i}$ is minuscule if and only if $\alpha_{i}$ is co-minuscule (see \cite[Lemma 3.1]{KS2}). So, the theorem is true when $G$ is simply-laced (see \cite[Theorem 9.2]{KS2}). So, we may assume that $G$ is non-simply-laced.
		
		Now proof of the theorem follows from Proposition \ref{prop5.8}, Proposition \ref{prop6.7}, Proposition \ref{prop 7.6}, Proposition \ref{prop 8.3}, and Proposition \ref{theorem 9.1}.
	\end{proof}
	
	We conclude this article by the following problem.
	
	{\bf Problem :}
	There is an interesting question arise out of this work. Let $G$ be a simple algebraic group of adjoint type. What are the parabolic subgroups $P$ of $G$ containing a minimal parabolic subgroup properly, such that  $P=Aut^0(X(w))$ for some Schubert variety in a partial flag variety?

	{\bf Acknowledgements:}
	The first named author thanks the Infosys Foundation for the partial financial support. He also thanks MATRICS for the partial
	financial support.

\end{document}